\newtheorem{theorem}{Theorem}[section]
\newtheorem{lemma}[theorem]{Lemma}
\newtheorem{corollary}[theorem]{Corollary}
\theoremstyle{definition}
\newtheorem{definition}[theorem]{Definition}
\theoremstyle{remark}
\newtheorem{remark}[theorem]{Remark}
\newtheorem{assumption}[theorem]{Assumption}
\newenvironment{myminipage}{\begin{adjustwidth}{.025\textwidth}{0\textwidth}}{\end{adjustwidth}}
\title[Quasi-Optimal Error Estimates for the Stabilized Navier-Stokes Problem]
{Quasi-Optimal Error Estimates for the Incompressible Navier-Stokes Problem Discretized by Finite Element Methods and Pressure-Correction Projection with Velocity Stabilization}
\thanks{The work of the first author was supported by CRC 963 founded by German research council (DFG).}
\thanks{The work of the second author was supported by the RTG 1023 founded by German research council (DFG).}
\g@addto@macro{\endabstract}{\@setabstract}
\newcommand{\authorfootnotes}{\renewcommand\thefootnote{\@fnsymbol\c@footnote}}%
\begin{document}
\begin{center}
  \LARGE 
  Quasi-Optimal Error Estimates for the Incompressible Navier-Stokes Problem Discretized by Finite Element Methods and Pressure-Correction Projection with Velocity Stabilization \par \bigskip

  \normalsize
  \authorfootnotes
  Daniel Arndt\textsuperscript{1}, 
  Helene Dallmann\textsuperscript{1} and
  Gert Lube\textsuperscript{1} \par \bigskip

  \textsuperscript{1}Institute for Numerical and Applied Mathematics, \\Georg-August University of G\"ottingen, D-37083, 
  Germany\\
  \texttt{d.arndt/h.dallmann/lube@math.uni-goettingen.de}\par \bigskip

\end{center}

\begin{abstract}
We consider error estimates for the fully discretized instationary Navier-Stokes problem.
For the spatial approximation we use conforming inf-sup stable finite element methods in conjunction with grad-div and local projection stabilization 
acting on the streamline derivative. For the temporal discretization a pressure-correction projection algorithm based 
on BDF2 is used. 
We can show quasi-optimal rates of convergence with respect to time and spatial discretization for all considered error measures. Some of the error estimates are quasi-robust with respect to the Reynolds number.
\end{abstract}
%
%
\subjclass{35Q30, 65M12, 65M15, 65M60}
\keywords{ incompressible flow; Navier-Stokes equations; 
 stabilized finite elements; local projection stabilization; time discretization}
%

\section{Introduction}\label{section:introduction}
We consider the time-dependent Navier-Stokes equations
\begin{align}
\begin{aligned}
\label{NavSto_1}
 \partial_t {\boldsymbol{u}} -\nu\Delta {\boldsymbol{u}} +({\boldsymbol{u}}\cdot\nabla) {\boldsymbol{u}} +\nabla p &= {\boldsymbol{f}} \, \quad
          \mbox{in}~~(t_0,T) \times \Omega, \\
  \nabla \cdot {\boldsymbol{u}} &=0 \, \quad \mbox{in}~~(t_0,T) \times \Omega, \\
 {\boldsymbol{u}} &= {\boldsymbol{0}} \, \quad \mbox{in}~~(t_0,T)\times \partial\Omega, \\
 {\boldsymbol{u}}(0,\cdot) &= {\boldsymbol{u}}_0(\cdot) \, \quad \mbox{in}~~\Omega
\end{aligned}
\end{align}
in a bounded polyhedral domain $\Omega \subset {\mathbb R}^d $,~$d\in \{2,3\}$.
Here ${\boldsymbol{u}} \colon (t_0,T) \times \Omega \to {\mathbb R}^d$ and $p \colon (t_0,T) \times \Omega \to {\mathbb R}$ denote
the unknown velocity and pressure fields for given viscosity $\nu>0$ and external forces
${\boldsymbol{f}}\in [L^2(t_0,T;[L^2(\Omega)]^d) \cap C(t_0,T;[L^2(\Omega)]^d)]^d$.

For the discretization with respect to time we use a splitting method called (standard) incremental pressure-correction projection method which is based on the backward differentiation formula of
second order (BDF2). In the continuous problem $\boldsymbol{u}$ and $p$ are coupled through the incompressibility constraint. The idea for pressure-correction projection methods is to define an
auxiliary variable $\widetilde{\boldsymbol{u}}$ and solve for $\widetilde{\boldsymbol{u}}$ and $p$ in two different steps such that the original velocity can be recovered from these two quantities.
Such an approach was first considered by Chorin \cite{chorin1969} and Temam \cite{temam1969}.
An overview over different projection methods is given in \cite{guermond2006overview}.
Badia and Codina \cite{badia2007convergence} analyzed the incremental pressure-correction algorithm with BDF1 time discretization.
The incremental pressure-correction algorithm with BDF2 time discretization is discussed by Guermond in \cite{guermond1999resultat} for the unstabilized Navier-Stokes equations with $\nu=1$. Shen
considered a different second order time discretization scheme in \cite{shen1996error}. It turns out that this technique suffers from unphysical boundary conditions for the pressure that lead to
reduced rates of convergence. To prevent this Timmermans proposed in \cite{Timmermans1996} the rotational pressure-correction projection method that uses a divergence correction for
the pressure. A thorough analysis for this has first been performed in \cite{guermond2004error} for the Stokes problem.

For the spatial stabilization of the Navier-Stokes equations or related problems there exist many different approaches. 
Residual-based stabilization methods penalize the residual of the differential equation in the strong formulation and are hence consistent. See \cite{roos2008robust} for an overview. 
The bulk of non-symmetric form of the stabilization terms and the occurrence of second order derivatives in the residual are drawbacks regarding the efficiency of this method. 
For the fully discretized case a local projection stabilization (LPS) PSPG-type stabilization for the discrete pressure is combined with LPS for the convective term in \cite{rebollo2015numerical}. Stability and convergence is proven using an 
semi-implicit Euler scheme for the discretization in time.

In \cite{arndt2014local} we considered the semi-discretized time-dependent incompressible Navier-Stokes problem where the spatial discretization had been performed with inf-sup stable finite
element methods with grad-div stabilization and a stream line upwind local 
projection stabilization (LPS SU) of the convective term (see also Section 
\ref{sec:2} of the present paper). Inspired by the ideas in  
\cite{burman2007continuous} for edge stabilized methods with equal order ansatz 
spaces, we were able to prove quasi-robust error estimates in case smooth 
solutions satisfy $\boldsymbol{u} \in [L^\infty (t_0,T;W^{1,\infty}(\Omega))]^d$ 
(which ensures uniqueness of the Navier-Stokes solution). The latter means that 
coefficients
of the right hand side of the error estimate may depend on Sobolev norms of the solution $(\boldsymbol{u},p)$ but not on critical physical parameters like $1/\nu$.

In the present paper, we extend the analysis to the fully discrete incremental pressure-correction algorithm with BDF2 time discretization, see Section~\ref{sec:pressure-correction}. The proposed approach is based
on the paper \cite{guermond1999resultat} by Guermond (and preliminary considerations in \cite{technicalreportAD2015}).
It turns out that grad-div stabilization is again essential for the derivation of quasi-robust error estimates whereas the LPS gives additional control of dissipative terms, 
see Section~\ref{sec:full1} with the main result in Theorem~\ref{thm:ansatz_guermond}. As the result is quasi-optimal in the spatial variables, it is not optimal in time. Therefore, in Section~\ref{sec:temporal-spatial} we modify the analysis in
\cite{guermond1999resultat} to improve the order of the temporal discretization. Although the resulting estimates are not quasi-robust, we nevertheless consider the dependence
of the error estimates on $\nu$ and the choice of appropriate bounds for the stabilization parameters. In all the cases, it turns out that grad-div stabilization is essential while the LPS
can be neglected for deriving quasi-robust error estimates. Therefore, in Section~\ref{sec:numerics} numerical examples consider the confirmation of the analytical results with respect to rates of convergence
as well as the the influence of the SU stabilization as subgrid model.
A critical discussion of the results, can be found in Section~\ref{section:discussion}.

\section{Stabilized Finite Element Discretization for the Navier-Stokes Problem} \label{sec:2}

In this section, we describe the model problem and the spatial semi-discretization based on
inf-sup stable interpolation of velocity and pressure together with local projection
stabilization.

\subsection{Time-Dependent Navier-Stokes Problem}

In the following, we will consider the usual Sobolev spaces $W^{m, p}(\Omega)$ with norm $\| \cdot\|_{W^{m, p}(\Omega)}, 
m \in {\mathbb N}_0, p \ge 1$. In particular, we have $L^p(\Omega)=W^{0,p}(\Omega)$ and denote $H^m(\Omega):=W^{m,2}(\Omega)$. Moreover, the closed subspaces 
 $H_0^1(\Omega):=W_0^{1,2}(\Omega)$, consisting of functions in $W^{1,2}(\Omega)$ with zero trace on $\partial \Omega$, and $L_0^2(\Omega)$, 
consisting of $L^2$-functions with zero mean in $\Omega$, will be used. The inner product in $L^2(D)$ with 
$D \subseteq \Omega$ will be denoted by $(\cdot,\cdot)_D$. In case of $D=\Omega$ 
we omit the index.\\
The variational formulation of problem (\ref{NavSto_1}) reads: 
\begin{center}
\begin{myminipage}
Find $\boldsymbol{\mathcal{U}}=(\boldsymbol{u},p) \in [L^2(t_0,T;\boldsymbol{V}) \cap L^\infty(t_0,T;[L^2(\Omega)]^d)] \times L^2(t_0,T;Q)$ where 
$\boldsymbol{V}\times Q :=
[W^{1,2}_0(\Omega)]^d\times L_0^2(\Omega)$ such that
\begin{align}
\label{Varform_1}
 (\partial_t \boldsymbol{u}, \boldsymbol{v}) + A_G(\boldsymbol{u};\boldsymbol{\mathcal{U}}, \boldsymbol{\mathcal{V}})  & = 
(\boldsymbol{f}, \boldsymbol{v})~~\forall \boldsymbol{\mathcal{V}} =(\boldsymbol{v}, q)\in
\boldsymbol{V}\times Q
\end{align}
with the Galerkin form
\begin{align}
\begin{aligned}
\label{Varform_2} 
A_G(\boldsymbol{w};\boldsymbol{\mathcal{U}}, \boldsymbol{\mathcal{V}}) := &  \underbrace{\nu(\nabla \boldsymbol{u}, \nabla 
\boldsymbol{v})-(p, \nabla\cdot \boldsymbol{v})
                                 +(q, \nabla\cdot \boldsymbol{u})}_{=:a_G(\boldsymbol{\mathcal{U}},\boldsymbol{\mathcal{V}})}  \\&
    + \underbrace{\frac12\big[ ((\boldsymbol{w}\cdot\nabla)\boldsymbol{u}, \boldsymbol{v})- 
            ((\boldsymbol{w}\cdot\nabla)\boldsymbol{v}, \boldsymbol{u}) \big]}_{= 
c(\boldsymbol{w};\boldsymbol{u},\boldsymbol{v})}.
\end{aligned}
\end{align}
\end{myminipage}
\end{center}
The skew-symmetric form of the convective term $c$ is chosen for conservation purposes.
In this paper, we will additionally assume that the velocity field $\boldsymbol{u}$ belongs to $L^\infty
(t_0,T;[W^{1,\infty}(\Omega)]^d)$
which ensures uniqueness of the solution.

\subsection{Finite Element Spaces}

For a simplex $T \in {\mathcal T}_h$ or a quadrilateral/hexahedron $T$ in ${\mathbb R}^d$, let 
$\hat T$ be the reference unit simplex or the unit cube $(-1,1)^d$. The bijective reference mapping 
$F_T \colon \hat T \to T$ is affine for simplices and multi-linear for quadrilaterals/hexahedra. 
Let $\hat {\mathbb P}_l$ and $\hat {\mathbb Q}_l$ with $l \in {\mathbb N}_0$ be the set of 
polynomials of degree $\leq l$ and of polynomials of degree $\leq l$ in each variable separately. 
Moreover, we set 
\[
  {\mathbb R}_l(\hat T) := \left \lbrace \begin{tabular}{ll}
      ${\mathbb P}_l(\hat T)$ & on simplices $\hat T$ \\
      ${\mathbb Q}_l(\hat T)$ & on quadrilaterals/hexahedra $\hat T$. 
   \end{tabular} \right.
\]
Define
\begin{align*}
   Y_{h,-l} & :=  \lbrace \boldsymbol{v}_h \in L^2(\Omega):~\boldsymbol{v}_h|_T \circ F_T \in {\mathbb R}_l(\hat T)~
            \forall T \in {\mathcal T}_h \rbrace, \\
   Y_{h,l} & := Y_{h,-l} \cap W^{1,2}(\Omega).
\end{align*}

For convenience, we write $\boldsymbol{V}\!_h = {\mathbb R}_{k_u}$ instead of $\boldsymbol{V}\!_h = [Y_{h,k_u}]^d \cap \boldsymbol{V}$ 
and $Q_h = {\mathbb R}_{\pm k_p}$ instead of 
$Q_h = Y_{h,\pm k_p} \cap Q$.

\begin{assumption}
 \label{assumption-A.1}
Let $\boldsymbol{V}\!_h$ and $Q_h$ be finite element spaces 
satisfying a discrete inf-sup-condition 
\begin{equation}
\label{Babuska-Brezzi}
  \inf_{q \in Q_h \setminus  \lbrace 0 \rbrace }  \sup_{\boldsymbol{v} \in \boldsymbol{V}\!_h \setminus \lbrace \boldsymbol{0} \rbrace } 
       \frac{(\nabla \cdot {\boldsymbol{v}}, q)}{\|\nabla \boldsymbol{v}\|_0 \|q\|_0 } \ge \beta > 0
\end{equation}
with a constant $\beta$ independent on $h$.
\end{assumption}

\subsection{Stabilization}

For a Galerkin approximation of problem (\ref{Varform_1})-(\ref{Varform_2}) on an admissible partition 
${\mathcal T}_h$ of the polyhedral domain $\Omega$, consider finite dimensional spaces 
$\boldsymbol{V}\!_h\times Q_h\subset \boldsymbol{V}\times Q$. 
Then, the semi-discretized problem reads:\\
 Find $\boldsymbol{\mathcal{U}}_h=(\boldsymbol{u}_h,p_h) \colon (t_0,T) \to \boldsymbol{V}\!_h\times Q_h$ such that for all 
$\boldsymbol{\mathcal{V}}_h = (\boldsymbol{v}_h,q_h)\in \boldsymbol{V}\!_h\times Q_h$:
\begin{align}
\label{Galerkin}
 (\partial_t \boldsymbol{u}_h,\boldsymbol{v}_h) + A_G(\boldsymbol{u}_h;\boldsymbol{\mathcal{U}}_h,\boldsymbol{\mathcal{V}}_h)  = 
(\boldsymbol{f}, \boldsymbol{v}_h). 
\end{align}

The semi-discrete Galerkin solution of problem (\ref{Galerkin}) may suffer from spurious oscillations 
due to poor mass conservation or dominating advection. 
The idea of local projection stabilization (LPS) methods is to separate discrete 
function spaces into small and large scales and to add stabilization terms only on small scales. 

Let $\lbrace {\mathcal M}_h \rbrace$ be a family of shape-regular macro decompositions of $\Omega$ into 
$d$-simplices, quadrilaterals ($d=2$) or hexahedra ($d=3$). In the one-level LPS-approach, one has
${\mathcal M}_h = {\mathcal T}_h$. In the two-level LPS-approach, the 
decomposition ${\mathcal T}_h$ is derived from ${\mathcal M}_h$ by barycentric refinement of $d$-simplices 
or regular (dyadic) refinement of quadrilaterals and hexahedra. We denote by $h_T$ and $h$ the diameter
of cells $T \in {\mathcal T}_h$ and $M \in {\mathcal M}_h$. It holds $h_T \leq h \leq Ch_T$ for all 
$T \subset M$ and $M \in {\mathcal M}_h$.

\begin{assumption}\label{assumption-A.2}
 Let the finite element space $\boldsymbol{V}\!_h$ satisfy the local inverse inequality
\begin{equation}
\label{loc-inv}
    \|\nabla \boldsymbol{v}_h\|_{0,M} \leq C h^{-1} \| \boldsymbol{v}_h\|_{0,M} \quad \forall \boldsymbol{v}_h \in \boldsymbol{V}\!_h,~M \in {\mathcal M}_h.
\end{equation}
\end{assumption}

\begin{assumption} \label{assumption-A.3}
There are quasi-interpolation operators $j_u \colon \boldsymbol{V}  \to \boldsymbol{V}\!_h$ and 
$j_p \colon Q \to Q_h$ such that for all $M \in {\mathcal M}_h$, for all 
${\boldsymbol{w}} \in \boldsymbol{V} \cap [W^{l,2}(\Omega)]^d$ with $1 \leq l \leq k_u+1$:
\begin{equation} \label{approx-1}
  \|{\boldsymbol{w}}-j_u{\boldsymbol{w}}\|_{0,M} + h\| \nabla ({\boldsymbol{w}} - j_u{\boldsymbol{w}})\|_{0,M} 
              \leq C h^l \|{\boldsymbol{w}}\|_{W^{l,2}(\omega_M)}
\end{equation}
and for all $q \in Q \cap H^l(M)$ with $1\leq l \leq k_p+1$:
\begin{equation} \label{approx-2}
 \|q-j_p q\|_{0,M} + h \| \nabla (q-j_p q)\|_{0,M} \leq C h^l \| q \|_{W^{l,2}(\omega_M)}.
\end{equation}
on a suitable patch $\omega_M \supset M$. Moreover, let
\[
  \| {\boldsymbol{v}} - j_u{\boldsymbol{v}} \|_{L^\infty(M)} \leq C h |{\boldsymbol{v}}|_{W^{1,\infty}(M)} 
     \quad \forall {\boldsymbol{v}} \in [W^{1,\infty}(M)]^d.
\]
\end{assumption}

Let $\boldsymbol{D}_M \subset [L^\infty(M)]^d$ denote a  finite element space on $M \in \mathcal M_h$ for ${\boldsymbol{u}}_h$.
For each $M \in {\mathcal M}_h$, let $\pi_M \colon [L^2(M)]^d \to \boldsymbol{D}_M$ be the orthogonal 
$L^2$-projection. Moreover, we denote by $\kappa_M :=id-\pi_M$ the so-called fluctuation operator. 

\begin{assumption}\label{assumption-A.4}
The fluctuation operator $\kappa_M =id-\pi_M$ provides the approximation property
(depending on $\boldsymbol{D}_M$ 
and $ s \in \lbrace 0,\cdots, k_u \rbrace$):
\begin{equation} \label{approx-3}
  \| \kappa_M {\boldsymbol{w}} \|_{0,M} \leq C h^l \|{\boldsymbol{w}}\|_{W^{l,2}(M)},~
      \forall \boldsymbol{w} \in W^{l,2}(M),~M \in {\mathcal M}_h,~l=0, \ldots, s.
\end{equation}
\end{assumption}
~\\
A sufficient condition for Assumption~\ref{assumption-A.4} is $[{\mathbb 
P}_{s-1}]^d \subset \boldsymbol{D}_M$.

In this work, we restrict ourselves to inf-sup stable element pairs.
This means that the space of discretely divergence-free functions is non-empty:
 \begin{align}
  \boldsymbol{V}\!_h^{\,div}:=\{\boldsymbol{v}_h\in\boldsymbol{V}\!_h: (\nabla \cdot \boldsymbol{v}_h,q_h)=0\quad\forall q_h\in Q_h\}\not= \emptyset
  \label{def:Vhdiv}
 \end{align}

\begin{definition} \label{def:streamline-direction}
For each macro element $M \in {\mathcal M}_h$ define the element-wise averaged streamline direction 
${\boldsymbol{u}}_M\in {\mathbb R}^d$ by
\begin{equation} \label{average-u} 
   {\boldsymbol{u}}_M := \frac{1}{|M|} \int_M {\boldsymbol{u}}(x)~dx. 
\end{equation}
This choice gives the estimates
\begin{align}
  |{\boldsymbol{u}}_M|&\leq C \|{\boldsymbol{u}}\|_{L^\infty(M)}, &
  \|{\boldsymbol{u}}-{\boldsymbol{u}}_M\|_{L^\infty(M)}&\leq Ch|{\boldsymbol{u}}|_{W^{1,\infty}(M)}.
\end{align}
\end{definition}
~\\
Now, we can formulate the semi-discrete stabilized approximation:
\begin{center}
 \begin{myminipage}
 Find $\boldsymbol{\mathcal{U}}_h=({\boldsymbol{u}}_h,p_h) \colon (t_0,T) \to \boldsymbol{V}\!_h\times Q_h$, such that for all 
${\boldsymbol{\mathcal{V}}}_h = ({\boldsymbol{v}}_h,q_h)\in \boldsymbol{V}\!_h\times Q_h$:
\begin{equation} \label{LPS}
 (\partial_t {\boldsymbol{u}}_h,{\boldsymbol{v}}_h) + A_G({\boldsymbol{u}}_h;\boldsymbol{\mathcal{U}}_h,{\boldsymbol{\mathcal{V}}}_h) + s_h({\boldsymbol{u}}_h;{\boldsymbol{u}}_h,{\boldsymbol{v}}_h) 
  + t({\boldsymbol{u}}_h;{\boldsymbol{u}}_h,{\boldsymbol{v}}_h)  =   (\boldsymbol{f},{\boldsymbol{v}}_h) 
\end{equation}
with the streamline-upwind-type stabilization $s_h$ and the grad-div 
stabilization $t$ according to
\begin{align}
\label{SUPG}
 s_h({\boldsymbol{w}}_h,{\boldsymbol{u}},{\boldsymbol{y}}_h,{\boldsymbol{v}}) &:= \sum_{M\in\mathcal M_h} \tau_M (\kappa_M(({\boldsymbol{w}}_M\cdot\nabla) {\boldsymbol{u}}), 
                \kappa_M(({\boldsymbol{y}}_M\cdot \nabla) {\boldsymbol{v}}))_M \\
\label{grad-div}
 t({\boldsymbol{u}},{\boldsymbol{v}})  &:= \gamma (\nabla \cdot {\boldsymbol{u}}, \nabla \cdot
{\boldsymbol{v}}).
\end{align}  
 \end{myminipage}
\end{center}
The set of non-negative stabilization parameters $\tau_M$, $\gamma$ 
has to be determined later on. For the grad-div stabilization at least 
$\gamma>\nu$ is assumed.

Occasionally, we consider the error in a norm given by symmetrically testing the stabilized approximation without time derivatives:
\begin{align*}
 \|\boldsymbol{u}\|_{LPS}^2:=\nu\|\nabla\boldsymbol{u}\|_0^2+\gamma\|\nabla\cdot\boldsymbol{u}\|_0^2+\sum_{M\in\mathcal M_h}\tau_M \|\kappa((\boldsymbol{u}_M\cdot\nabla)\boldsymbol{u})\|_0^2
\end{align*}

\section{Pressure-Correction Projection Discretization}\label{sec:pressure-correction}
For the discretization in time on the interval $[t_0,T]$ we consider $N$ equidistant time steps of size $\Delta t=(T-t_0)/N$ yielding the set $M_T=\{t_0, \dots, t_N=T\}$. The scheme that we are using is a pressure-correction projection approach based on BDF2.\\
In order to abbreviate the discrete time derivative we define the operator $D_t$ by
\begin{align}
 D_t \boldsymbol{u}^n:=\frac{3 \boldsymbol{u}^n-4\boldsymbol{u}^{n-1}+\boldsymbol{u}^{n-2}}{2 
\Delta t}.
\end{align}
Defining $\boldsymbol{Y}\!_h:=\boldsymbol{V}\!_h^{\,div}\oplus\nabla Q_h\subset [L^2(\Omega)]^d$
the fully discretized and stabilized scheme reads:
\begin{center}
 \begin{myminipage}
Find $\widetilde{\boldsymbol{u}}_h^n\in\boldsymbol{V}\!_h,\boldsymbol{u}_h^n\in\boldsymbol{Y}\!_h$ and $p_h^n \in Q_h$
\begin{align}
\begin{aligned}
\left(\frac{3 \widetilde{\boldsymbol{u}}_h^n-4\boldsymbol{u}^{n-1}_h+\boldsymbol{u}^{n-2}_h}{2 \Delta 
t}, \boldsymbol{v}_h\right) 
+\nu(\nabla \widetilde{\boldsymbol{u}}_h^n, \nabla \boldsymbol{v}_h) &+ c(\widetilde{\boldsymbol{u}}_h^n, \widetilde{\boldsymbol{u}}_h^n, \boldsymbol{v}_h)\\
+ \gamma (\nabla\cdot \widetilde{\boldsymbol{u}}_h^n, \nabla\cdot \boldsymbol{v}_h) 
+ s_h(\widetilde{\boldsymbol{u}}_h^n, \widetilde{\boldsymbol{u}}_h^n, \widetilde{\boldsymbol{u}}_h^n, \boldsymbol{v}_h)
=& (\boldsymbol{f}^n, \boldsymbol{v}_h) +( p^{n-1}_h,\nabla\cdot \boldsymbol{v}_h)\\
\widetilde{\boldsymbol{u}}_h^n|_{\partial\Omega}=&0
\label{eqn:full1}
\end{aligned}
\end{align}
\begin{align}
\begin{aligned}
\left(\frac{3 \boldsymbol{u}_h^n-3\widetilde{\boldsymbol{u}}_h^n}{2 \Delta t}
+\nabla (p_h^{n}-p_h^{n-1}), \boldsymbol{y}_h \right)&=0\\
(\nabla\cdot \boldsymbol{u}_h^n, q_h)&=0\\
\boldsymbol{u}_h^n|_{\partial\Omega} &=0
\label{eqn:full2}
\end{aligned}
\end{align}
holds for all $\boldsymbol{v}_h\in \boldsymbol{V}\!_h,~\boldsymbol{y}_h\in \boldsymbol{Y}\!_h $  and $q_h\in Q_h$. 
\end{myminipage}
\end{center}
~
From here on, we assume $Q_h\subset H^1(\Omega)$.
We call (\ref{eqn:full1}) the convection-diffusion and (\ref{eqn:full2}) the projection step.

\begin{remark}
By testing equation (\ref{eqn:full2}) with $\boldsymbol{w}_h\in\boldsymbol{V}\!_h^{\,div}$ we derive
\begin{align}
(\boldsymbol{u}_h^n-\widetilde{\boldsymbol{u}}_h^n, \boldsymbol{w}_h)=0\quad \forall \boldsymbol{w}_h\in\boldsymbol{V}\!_h^{\,div}.
\end{align}
Hence, $\boldsymbol{u}_h^n$ is the $L^2(\Omega)$ projection of $\widetilde{\boldsymbol{u}}_h^n$ onto $\boldsymbol{V}\!_h^{\,div}$ and $\|\boldsymbol{u}_h^n\|\leq\|\widetilde{\boldsymbol{u}}_h^n\|$.
\label{rem:l2proj}
\end{remark}

Choosing a slightly bigger ansatz space $\boldsymbol{Y}\!_h^1:=\boldsymbol{V}\!_h+\nabla Q_h$ instead of $\boldsymbol{Y}\!_h$ we can eliminate the weakly solenoidal field $\widetilde{\boldsymbol{u}}_h^n$
and replace (\ref{eqn:full1}) by the equation  
\begin{align}
 \begin{aligned}
  &(D_t \widetilde{\boldsymbol{u}}_h^n, \boldsymbol{v}_h) +\nu(\nabla \widetilde{\boldsymbol{u}}_h^n, \nabla 
\boldsymbol{v}_h) +
c(\widetilde{\boldsymbol{u}}_h^n; \widetilde{\boldsymbol{u}}_h^n, \boldsymbol{v}_h)
 \\ &\quad
+ s_h(\widetilde{\boldsymbol{u}}_h^n, \widetilde{\boldsymbol{u}}_h^n, \widetilde{\boldsymbol{u}}_h^n, \boldsymbol{v}_h) + 
\gamma (\nabla\cdot \widetilde{\boldsymbol{u}}_h^n, \nabla\cdot \boldsymbol{v}_h) \\
&\quad= (\boldsymbol{f}^n, \boldsymbol{v}_h) +\left(\frac73 p_h^{n-1}-\frac53
p_h^{n-2}+\frac13 p_h^{n-3}, \nabla\cdot\boldsymbol{v}_h\right)\\
&\widetilde{\boldsymbol{u}}_h^n|_{\partial\Omega}=0
\label{eqn:full_implemented1}
 \end{aligned}
 \end{align}
 and equation (\ref{eqn:full2}) by
 \begin{align}
  \begin{aligned}
  (\nabla (p_h^n-p_h^{n-1}), \nabla q_h)&=\left(\frac{3 \nabla\cdot \widetilde{\boldsymbol{u}}_h^n}{2 \Delta t}, q_h\right)\\
  (\boldsymbol{n} \cdot \nabla p_h^n)|_{\partial\Omega} &=0.
 \label{eqn:full_implemented2}
 \end{aligned}
 \end{align}
  $\boldsymbol{u}_h^n$ can then be recovered according to
 \begin{align*}
 \boldsymbol{u}_h^n=\widetilde{\boldsymbol{u}}_h^n-\nabla (p_h^{n}-p_h^{n-1}).
 \end{align*}
 This is the approach that is used in the implementation. The equivalence of the two formulations (\ref{eqn:full1}), (\ref{eqn:full2})
 and (\ref{eqn:full_implemented1}), (\ref{eqn:full_implemented2}) has been considered by Guermond in \cite{guermond1996some} for 
 a first order unstabilized projection scheme.
 
 \begin{remark}
For the first time step we use a BDF1 instead of the BDF2 scheme. In particular, the convection-diffusion and the projection step in the fully discretized setting read:
\begin{center}
\begin{myminipage}
 Find $\widetilde{\boldsymbol{u}}_h^n\in\boldsymbol{V}\!_h,\boldsymbol{u}_h^n\in\boldsymbol{V}\!_h^{\,div}$ and $p_h^n \in Q_h$ such that
\begin{align}
\begin{aligned} 
\left(\frac{\widetilde{\boldsymbol{u}}^1_h-\boldsymbol{u}^0_h}{\Delta 
t}, \boldsymbol{v}_h\right) 
+\nu(\nabla \widetilde{\boldsymbol{u}}^1_h,\nabla \boldsymbol{v}_h) 
+ c(\widetilde{\boldsymbol{u}}^1_h; \widetilde{\boldsymbol{u}}^1_h,\boldsymbol{v}_h)&\\ 
+ s_h(\widetilde{\boldsymbol{u}}^1_h,\widetilde{\boldsymbol{u}}^1_h,\widetilde{\boldsymbol{u}}^1_h,\boldsymbol{v}_h) + 
\gamma (\nabla\cdot \widetilde{\boldsymbol{u}}^1_h,\nabla\cdot \boldsymbol{v}_h) &= (\boldsymbol{f}^1, \boldsymbol{v}_h) +( p^0_h,\nabla\cdot \boldsymbol{v}_h), \label{eqn:initial_diffusion} \\
\widetilde{\boldsymbol{u}}_h^1|_{\partial\Omega}&=0
\end{aligned}
\end{align}
\begin{align}
\begin{aligned}
  \left(\frac{\boldsymbol{u}^1_h-\widetilde{\boldsymbol{u}}^1_h}{\Delta t}+\nabla (p_h^1-p_h^0), \boldsymbol{y}_h\right)&=0, \\
  (\nabla \cdot \boldsymbol{u}_h^1, q_h)&=0 \label{eqn:initial_projection}\\ 
\boldsymbol{u}_h^1|_{\partial\Omega}&=0 
\end{aligned}
\end{align}
holds for all $\boldsymbol{v}_h\in \boldsymbol{V}\!_h,\boldsymbol{y}_h\in \boldsymbol{Y}\!_h$ and $q_h\in Q_h$.
\end{myminipage}
\end{center}
The initial values are chosen according to $\widetilde{\boldsymbol{u}}^0_h=\boldsymbol{u}^0_h=j_u \boldsymbol{u}(t_0)$ and $p_h^0=j_p p(t_0)$ using the interpolation operators defined later on.
\end{remark}

 \begin{definition} \label{def:norm_fullydiscrete}
Consider sequences $\boldsymbol{u}=(\boldsymbol{u}^1, \dots, \boldsymbol{u}^N)\in\boldsymbol{A}^N$ of vector-valued and $p=(p^1, \dots, p^N)\in B^N$ of scalar-valued quantities, where $\boldsymbol{A}$ and $B$ are normed spaces and $1\le n\le N$. The norms we want to bound the errors in are defined by
\begin{align*}
\|\boldsymbol{u}\|_{l^2(t_0,T;\boldsymbol{A})}^2 &:=\Delta t\sum_{n=1}^N\|\boldsymbol{u}^n\|_{\boldsymbol{A}}^2, \quad
\|p\|_{l^2(t_0,T;B)}^2:=\Delta t\sum_{n=1}^N\|p^n\|_B^2, \\
\|\boldsymbol{u}\|_{l^\infty(t_0,T;\boldsymbol{A})} &:=\max_{1\le n\le N} \|\boldsymbol{u}^n\|_{\boldsymbol{A}}, \quad
\|p\|_{l^\infty(t_0,T;B)}:=\max_{1\le n\le N} \|p^n\|_B.
\end{align*}
 For quantities $r$ that are continuous in time we identify $r$ by its evaluation at the discrete points in time $(r(t_1), \dots, r(t_N))^T$.
 \end{definition} 
 \section{Quasi-Robustness and Quasi-Optimal Spatial Error Estimates }\label{sec:full1}
 In the following, we follow the strategy described by Guermond in 
\cite{guermond1999resultat}. Compared to his work we consider all 
$\nu$-dependencies as well as a grad-div and LPS SU.
 
The idea is to first state quasi-optimal results for the initial time step with respect to spatial and temporal discretization. Afterwards we derive estimates for $n\geq2$ that are optimal with respect to the LPS norm but suboptimal for the energy error and hence for the pressure error as well.
 \subsection{The Interpolation Operator}
 For the interpolation into the discrete ansatz spaces, we consider $(\boldsymbol{w}_h,r_h)$ as solution of the Stokes problem
\begin{center}
 \begin{myminipage}
  Find $\boldsymbol{w}_h^n\in\boldsymbol{V}\!_h$ and $r_h^n \in Q_h$ such that
\begin{align}
\begin{aligned}
&\nu(\nabla \boldsymbol{w}_h^n, \nabla \boldsymbol{v}_h) + \gamma (\nabla\cdot \boldsymbol{w}_h^n, \nabla\cdot \boldsymbol{v}_h) 
-( r_h^n, \nabla\cdot \boldsymbol{v}_h)
 \\&\quad
= \nu(\nabla \boldsymbol{u}(t_n), \nabla \boldsymbol{v}_h) 
+ \gamma (\nabla\cdot \boldsymbol{u}(t_n), \nabla\cdot \boldsymbol{v}_h)
-( p(t_n), \nabla\cdot \boldsymbol{v}_h)\\
&(\nabla\cdot\boldsymbol{w}_h^n, q_h)=(\nabla\cdot \boldsymbol{u}(t_n), q_h)=0
\end{aligned}
\end{align}
holds for all $\boldsymbol{v}_h\in \boldsymbol{V}\!_h$ and $q_h\in Q_h$.
\end{myminipage}
\end{center}
We define errors according to
\begin{align}
\label{eqn:error_u}
 \boldsymbol{\eta}_u^n&=\boldsymbol{u}(t_n)-\boldsymbol{w}_h^n, 
 & \boldsymbol{e}_u^n&=\boldsymbol{w}_h^n-\boldsymbol{u}_h^n, 
 & \boldsymbol{\zeta}_u^n&:=\boldsymbol{u}(t_n)-\boldsymbol{u}_h^n=\boldsymbol{\eta}_u^n+\boldsymbol{e}_u^n, \\ 
 \label{eqn:error_tildeu}
 && \widetilde{\boldsymbol{e}}_u^n&=\boldsymbol{w}_h^n-\widetilde{\boldsymbol{u}}_h^n, 
 & \widetilde{\boldsymbol{\zeta}}_u^n&:=\boldsymbol{u}(t_n)-\widetilde{\boldsymbol{u}}_h^n=\boldsymbol{\eta}_u^n+\widetilde{\boldsymbol{e}}_u^n, \\ 
 \label{eqn:error_p}
 \eta_p^n&=p(t_n)-r_h^n, &
 e_p^n&=r_h^n-p_h^n, &
 \zeta_p^n&:=p(t_n)-p_h^n=\eta_p^n+e_p^n.
\end{align}

According to \cite[Theorem 1]{Jenkins2013b} the solution of the grad-div stabilized Stokes problem can be bounded by
\begin{align*}
 \nu\|\nabla \boldsymbol{\eta}_u^n\|_0^2&\leq C\Big( \inf_{\boldsymbol{w}_h\in\boldsymbol{V}\!_h^{\,div}}\Big(
 \nu\|\nabla (\boldsymbol{u}(t_n)-\boldsymbol{w}_h)\|_0^2+\gamma \|\nabla\cdot (\boldsymbol{u}(t_n)-\boldsymbol{w}_h)\|_0^2 \Big)
 \\&\qquad\quad\quad+\frac{1}{\gamma}\inf_{q_h\in Q_h}\|p(t_n)-q_h\|_0^2\Big)\\
 &\leq C\left( \inf_{\boldsymbol{w}_h\in\boldsymbol{V}\!_h^{\,div}}
 (\nu+\gamma)\|\nabla( \boldsymbol{u}(t_n)-\boldsymbol{w}_h)\|_0^2+\frac{1}{\gamma}\inf_{q_h\in Q_h}\|p(t_n)-q_h\|_0^2\right).
\end{align*}

This result can easily be extended to include the grad-div stabilization on the left-hand side and using inf-sup stability we arrive at
\begin{align*}
 &\nu\|\nabla \boldsymbol{\eta}_u^n\|_0^2+\gamma\|\nabla\cdot \boldsymbol{\eta}_u^n\|_0^2+\|e_p^n\|_0^2
 \\&\leq C\left( \inf_{\boldsymbol{w}_h\in\boldsymbol{V}\!_h^{\,div}}
 (\nu+\gamma)\|\nabla( \boldsymbol{u}(t_n)-\boldsymbol{w}_h)\|_0^2+\frac{1}{\gamma}\inf_{q_h\in Q_h}\|p(t_n)-q_h\|_0^2\right).
\end{align*}

Using interpolation results according to Assumption~\ref{assumption-A.3} and the local inverse inequality (Assumption~\ref{assumption-A.2}) we get the bound
\begin{align}
\begin{aligned}
\label{eqn:stokes_iterp}
&\nu\|\boldsymbol{\eta}_u^n\|_1^2+\gamma\| \nabla\cdot\boldsymbol{\eta}_u^n\|_0^2+\|\eta_p^n\|_0^2+h^2 \|\nabla\eta_p^n\|_0^2 
 \\ &\leq C
  (\nu+  \gamma)h^{2k_u}\|\boldsymbol{u}(t_n)\|_{W^{k_u+1,2}}^2+\gamma^{-1}h^{2k_p+2}\|p(t_n)\|_{W^{k_p+1,2}}^2
 \end{aligned}
\end{align}
and using the Aubin-Nitsche trick 
\begin{align}
\begin{aligned}
\label{eqn:stokes_iterp_2}
\|\boldsymbol{\eta}_u^n\|_0^2&\leq C h^2(\nu\|\boldsymbol{\eta}_u^n\|_1^2+\gamma\| \nabla\cdot\boldsymbol{\eta}_u^n\|_0^2)
 \\ &
\leq 
 C ((\nu+  \gamma)h^{2k_u+2}\|\boldsymbol{u}(t_n)\|_{W^{k_u+1,2}}^2+\gamma^{-1}h^{2k_p+4}\|p(t_n)\|_{W^{k_p+1,2}}^2).
 \end{aligned}
\end{align}
This gives stability according to 
\begin{align}
\begin{aligned}
&\max_{1\leq n\leq N}(
 \|\boldsymbol{w}_h^n\|_0^2
 +\nu\|\boldsymbol{w}_h^n\|_1^2+\gamma\| \nabla\cdot\boldsymbol{w}_h^n\|_0^2+\|p_h^n\|_0^2)
 \\ &
  \leq \max_{1\leq n\leq N}(
 \|\boldsymbol{\eta}_u^n\|_0^2
 +\nu\|\nabla \boldsymbol{\eta}_u^n\|_1^2+\gamma\|\nabla\cdot\boldsymbol{\eta}_u^n\|_0^2 +\|\eta_p^n\|_0^2 
  \\ &\quad\qquad\qquad
 + \|\boldsymbol{u}(t_n)\|_0^2
 +\nu\|\nabla \boldsymbol{u}(t_n)\|_1^2+\gamma\|\nabla\cdot\boldsymbol{u}(t_n)\|_0^2 +\|p(t_n)\|_0^2) \\
 &\leq C\max_{1\leq n\leq N}((\|\boldsymbol{u}(t_n)\|_0^2+\nu\|\nabla \boldsymbol{u}(t_n)\|_1^2+\gamma\|\nabla\cdot\boldsymbol{u}(t_n)\|_0^2 +\|p(t_n)\|_0^2))
 \leq C.
 \end{aligned}
\end{align}

 \subsection{Initial Error Estimates}
Before deriving bounds for the case $n\geq2$ we consider the initialization step. Although both estimates follow the same approach, we nevertheless state both for the convenience of the reader.

With the abbreviations
\begin{align*}
 \delta_t \boldsymbol{u}^n &:=\boldsymbol{u}^n-\boldsymbol{u}^{n-1}&
 \widetilde{D}_t \boldsymbol{u}^n &:=\frac{\delta_t \boldsymbol{u}^n}{\Delta t}=\frac{\boldsymbol{u}^n-\boldsymbol{u}^{n-1}}{\Delta t}
\end{align*}
we can derive the following result:

\begin{lemma}\label{lem:initial}
 Provided the continuous solutions satisfy the regularity assumptions
 \begin{align*}
 \boldsymbol{u}&\in W^{2,\infty}(t_0,T;L^2)\cap L^\infty(t_0,T;W^{k_u+1,2})\\
 p &\in W^{1,\infty}(t_0,T;H^1)\cap L^\infty(t_0,T;W^{k_p+1,2}),
 \end{align*}
 we obtain for the initial errors $\boldsymbol{e}_u^1=\boldsymbol{w}_h^1-\boldsymbol{u}_h^1$, $\widetilde{\boldsymbol{e}}_u^1=\boldsymbol{w}_h^1-\widetilde{\boldsymbol{u}}_h^1$ and $e_p^1=r_h^1-p_h^1$
 \begin{align} 
 \begin{aligned}
  &  \|\boldsymbol{e}_u^1\|_0^2+\|\widetilde{\boldsymbol{e}}_u^1\|_0^2+\Delta t \nu \|\nabla \widetilde{\boldsymbol{e}}_u^1\|_0^2 
 + \Delta t \gamma \|\nabla \cdot  \widetilde{\boldsymbol{e}}_u^1\|_0^2 +(\Delta t)^2\|\nabla e_p^1\|_0^2
 \\&\quad+\Delta t \sum_{M\in\mathcal 
M_h}\tau_M^1\|\kappa_M((\widetilde{\boldsymbol{u}}_M^1\cdot\nabla)\widetilde{\boldsymbol{e}}_u^1)\|_{0,M}^2
\\\label{eqn:err1}&\lesssim ((\Delta t)^2) \|\widetilde{D}_t\boldsymbol{\eta}_u^1\|_0^2+((\Delta t)^2)\|\widetilde{D}_t \boldsymbol{u}(t_1)-\partial_t \boldsymbol{u}(t_1)\|_0^2+ (\Delta t)^2\|\delta_t \nabla r_h^1\|_0^2
\\&\quad
+ h^{-2z}\Delta t \| \boldsymbol{\eta}_u^1\|_0^2  
+ 3  \Delta t h^{2z-2}||| \boldsymbol{\eta}_u^1|||^2_{LPS}  
+ 3  \Delta t h^{2z-2}||| \widetilde{\boldsymbol{e}}_u^1|||^2_{LPS} 
 \\&\quad
+ \Delta t\max_{M\in\mathcal M_h}\{\tau_M^1 |\widetilde{\boldsymbol{u}}_M^1|^2\} \left(\|\boldsymbol{\eta}_u^1\|_1^2+\sum_{M\in\mathcal M_h}\|\kappa_M(\nabla\boldsymbol{u}(t_1))\|_0^2\right)
\end{aligned}
\end{align}
where $z\in\{0,1\}$ if
\begin{align*}
 K_1&:= \frac14+\Delta t\Bigg[ |\boldsymbol{u}(t_1)|_{W^{1,\infty}} + C h^{2z} |\boldsymbol{u}(t_1)|_{W^{1,\infty}}^2 
+  C\frac{h^2}{\gamma} | \boldsymbol{u}(t_1)|^2_{W^{1,\infty}}
\\&\qquad\qquad\qquad
+ C \frac{h^{2z-2}}{\gamma}\|\boldsymbol{u}(t_1)\|_{\infty}^2 
+ C h^{2z-2}\|\widetilde{\boldsymbol{u}}_h^1\|_{\infty}^2\Bigg]<1.
\end{align*}
 \end{lemma}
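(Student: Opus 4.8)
The plan is to derive an error equation for the convection--diffusion step~(\ref{eqn:initial_diffusion}), test it with $2\Delta t\,\widetilde{\boldsymbol{e}}_u^1$, and couple the result to a discrete Pythagoras-type inequality coming from the projection step~(\ref{eqn:initial_projection}). First I would subtract the continuous momentum equation at $t_1$, tested against $\boldsymbol{v}_h\in\boldsymbol{V}\!_h$ and augmented for free by $\gamma(\nabla\cdot\boldsymbol{u}(t_1),\nabla\cdot\boldsymbol{v}_h)=0$ since $\boldsymbol{u}$ is solenoidal, from~(\ref{eqn:initial_diffusion}). The defining relation of the Stokes interpolant $(\boldsymbol{w}_h^1,r_h^1)$ supplies the orthogonality $\nu(\nabla\boldsymbol{\eta}_u^1,\nabla\boldsymbol{v}_h)+\gamma(\nabla\cdot\boldsymbol{\eta}_u^1,\nabla\cdot\boldsymbol{v}_h)=(\eta_p^1,\nabla\cdot\boldsymbol{v}_h)$, so the ``nice'' part of the interpolation error cancels and the viscous and grad-div contributions reduce to terms in $\widetilde{\boldsymbol{e}}_u^1$. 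The right-hand side of this error identity then consists of the temporal truncation $\partial_t\boldsymbol{u}(t_1)-\widetilde{D}_t\boldsymbol{u}(t_1)$, the interpolation increment $\widetilde{D}_t\boldsymbol{\eta}_u^1$, the pressure coupling $(r_h^1-p_h^0,\nabla\cdot\boldsymbol{v}_h)=(\delta_t r_h^1+e_p^0,\nabla\cdot\boldsymbol{v}_h)$, the convective mismatch $c(\boldsymbol{u}(t_1);\boldsymbol{u}(t_1),\boldsymbol{v}_h)-c(\widetilde{\boldsymbol{u}}_h^1;\widetilde{\boldsymbol{u}}_h^1,\boldsymbol{v}_h)$, and the stabilization term $-s_h(\widetilde{\boldsymbol{u}}_h^1;\widetilde{\boldsymbol{u}}_h^1,\widetilde{\boldsymbol{u}}_h^1,\boldsymbol{v}_h)$.

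For the projection step I would use that, tested against $\boldsymbol{y}_h\in\boldsymbol{Y}\!_h$, one has $(\boldsymbol{e}_u^1+\Delta t\,\nabla e_p^1,\boldsymbol{y}_h)=(\widetilde{\boldsymbol{e}}_u^1+\Delta t\,\delta_t\nabla r_h^1+\Delta t\,\nabla e_p^0,\boldsymbol{y}_h)$. Choosing $\boldsymbol{y}_h=\boldsymbol{e}_u^1+\Delta t\,\nabla e_p^1\in\boldsymbol{Y}\!_h$ and exploiting that $\boldsymbol{e}_u^1\in\boldsymbol{V}\!_h^{\,div}$, $e_p^1\in Q_h\subset H^1$ with $\boldsymbol{e}_u^1|_{\partial\Omega}=0$ make the cross term $(\boldsymbol{e}_u^1,\nabla e_p^1)$ vanish, yielding the contraction bound $\|\boldsymbol{e}_u^1\|_0^2+(\Delta t)^2\|\nabla e_p^1\|_0^2\le\|\widetilde{\boldsymbol{e}}_u^1+\Delta t\,\delta_t\nabla r_h^1+\Delta t\,\nabla e_p^0\|_0^2$. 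This is what converts control of $\|\widetilde{\boldsymbol{e}}_u^1\|_0^2$ into the left-hand-side quantities $\|\boldsymbol{e}_u^1\|_0^2$ and $(\Delta t)^2\|\nabla e_p^1\|_0^2$, and it produces the $(\Delta t)^2\|\delta_t\nabla r_h^1\|_0^2$ term on the right.

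Testing the error equation with $2\Delta t\,\widetilde{\boldsymbol{e}}_u^1$, the discrete time derivative yields, via $2(a-b,a)=\|a\|^2-\|b\|^2+\|a-b\|^2$ with $a=\widetilde{\boldsymbol{e}}_u^1$ and $b=\boldsymbol{e}_u^0$, the coercive term $\|\widetilde{\boldsymbol{e}}_u^1\|_0^2$ (the $n=0$ contributions being of interpolation order), while the symmetric viscous, grad-div and $s_h$ parts give $2\Delta t\,(\nu\|\nabla\widetilde{\boldsymbol{e}}_u^1\|_0^2+\gamma\|\nabla\cdot\widetilde{\boldsymbol{e}}_u^1\|_0^2+\sum_M\tau_M^1\|\kappa_M((\widetilde{\boldsymbol{u}}_M^1\cdot\nabla)\widetilde{\boldsymbol{e}}_u^1)\|_{0,M}^2)$, half of which I spend on the right-hand side. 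The truncation and interpolation increments are handled by Cauchy--Schwarz and Young; the pressure coupling is split using grad-div coercivity for $\|\nabla\cdot\widetilde{\boldsymbol{e}}_u^1\|_0$ together with integration by parts, and the interpolation norms $\|\boldsymbol{\eta}_u^1\|_1$, $\|\eta_p^1\|_0$ are bounded through~(\ref{eqn:stokes_iterp})--(\ref{eqn:stokes_iterp_2}). The decisive step is the nonlinearity: I would use skew-symmetry to write $c(\boldsymbol{u}(t_1);\boldsymbol{u}(t_1),\widetilde{\boldsymbol{e}}_u^1)-c(\widetilde{\boldsymbol{u}}_h^1;\widetilde{\boldsymbol{u}}_h^1,\widetilde{\boldsymbol{e}}_u^1)=c(\boldsymbol{u}(t_1);\boldsymbol{\eta}_u^1,\widetilde{\boldsymbol{e}}_u^1)+c(\boldsymbol{\eta}_u^1;\widetilde{\boldsymbol{u}}_h^1,\widetilde{\boldsymbol{e}}_u^1)+c(\widetilde{\boldsymbol{e}}_u^1;\widetilde{\boldsymbol{u}}_h^1,\widetilde{\boldsymbol{e}}_u^1)$, the diagonal streamline part $c(\boldsymbol{u}(t_1);\widetilde{\boldsymbol{e}}_u^1,\widetilde{\boldsymbol{e}}_u^1)$ vanishing, and insert $\widetilde{\boldsymbol{u}}_h^1=(\boldsymbol{u}(t_1)-\boldsymbol{\eta}_u^1)-\widetilde{\boldsymbol{e}}_u^1$ into the second slot of $s_h$ to extract the coercive term $\sum_M\tau_M^1\|\kappa_M((\widetilde{\boldsymbol{u}}_M^1\cdot\nabla)\widetilde{\boldsymbol{e}}_u^1)\|_{0,M}^2$ and a consistency remainder $s_h(\widetilde{\boldsymbol{u}}_h^1;\boldsymbol{u}(t_1)-\boldsymbol{\eta}_u^1,\widetilde{\boldsymbol{u}}_h^1,\widetilde{\boldsymbol{e}}_u^1)$, which accounts for the $\max_M\{\tau_M^1|\widetilde{\boldsymbol{u}}_M^1|^2\}(\|\boldsymbol{\eta}_u^1\|_1^2+\sum_M\|\kappa_M(\nabla\boldsymbol{u}(t_1))\|_0^2)$ term.

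The main obstacle is to bound these convective pieces and the $s_h$ remainder \emph{without} any factor $1/\nu$, i.e. routing the dissipation through the grad-div term (whence the $1/\gamma$ weights) and through the local projection control rather than through the viscous term. Concretely I would split each streamline derivative via $id=\kappa_M+\pi_M$, estimate the fluctuation part against the coercive LPS term of $\widetilde{\boldsymbol{e}}_u^1$ and the projected part using Definition~\ref{def:streamline-direction}, Assumption~\ref{assumption-A.4} and the inverse inequality (Assumption~\ref{assumption-A.2}); the parameter $z\in\{0,1\}$ records whether the inverse inequality is invoked to trade a gradient of $\widetilde{\boldsymbol{e}}_u^1$ for $h^{-1}\|\widetilde{\boldsymbol{e}}_u^1\|_0$, which is exactly what generates the weights $h^{-2z}$ and $h^{2z-2}$ on the right. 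All contributions proportional to $\|\widetilde{\boldsymbol{e}}_u^1\|_0^2$, carrying their $\Delta t$, $h^{2z}$, $1/\gamma$ and $W^{1,\infty}$/$L^\infty$ factors together with the $\tfrac14$ from the Young constants, are collected into $K_1$; the hypothesis $K_1<1$ then lets me absorb them into the $\|\widetilde{\boldsymbol{e}}_u^1\|_0^2$ coming from the time derivative, after which adding the projection inequality gives~(\ref{eqn:err1}). Verifying that every dangerous term is weighted by $1/\gamma$ and not $1/\nu$, and that the powers of $h$ align as claimed, is where the bulk of the technical work lies.
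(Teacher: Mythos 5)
Your plan is essentially the paper's own proof: the paper proves this lemma by the same argument as Lemma~\ref{lem:errn2} (error equation tested with the scaled error, Stokes-interpolant orthogonality, the quasi-robust convective bound of Lemma~\ref{lem:convectiveterms} producing the $h^{-2z}$, $h^{2z-2}$ and $1/\gamma$ weights, Cauchy--Schwarz in $s_h$, and absorption under $K_1<1$), with the pressure coupling reduced to $-(\nabla\delta_t r_h^1,\widetilde{\boldsymbol{e}}_u^1)$ and the bound on $(\Delta t)^2\|\nabla e_p^1\|_0^2$ extracted from the projection equation exactly as you describe. Your Pythagoras identity for the projection step and your particular splitting of the convective difference are cosmetically different from the paper's (which tests with $\nabla e_p^1$ and uses the $T_1,T_2,T_3$ decomposition of Lemma~\ref{lem:convectiveterms}) but lead to the same estimates.
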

\begin{proof}
The proof is similar to the one for $n\geq2$. One difference is the different estimate for the pressure terms which simplifies to
\begin{align*}
 (r_h^1-p_h^0, \nabla\cdot \widetilde{\boldsymbol{e}}_u^1)=-(\nabla(r_h^1-r_h^0), \widetilde{\boldsymbol{e}}_u^1)
 \leq \frac{1}{8 \Delta t} \|\widetilde{\boldsymbol{e}}_u^1\|_0^2+C \Delta t\|\delta_t \nabla r_h^1\|_0^2.
 \end{align*}
 and due to the linearity of the Stokes problem it holds the estimate
\begin{align}
\begin{aligned}
\|\nabla \delta_t r_h^1\|_0^2&=\|\nabla \delta_t(p(t_1)-\eta_p^1)\|_0^2\leq \|\nabla \delta_t\eta_p^1 \|_0^2+\|\nabla \delta_t p(t_1)\|_0^2
\\&\leq C (\Delta t)^2(\|\boldsymbol{u}\|_{W^{1,\infty}(t_0,T;W^{k_u+1,2})}^2+\|p\|_{W^{1,\infty}(t_0,T;W^{k_p+1,2})}^2).
\end{aligned}
 \label{eqn:diff_pressure_3_0}
\end{align}
The remaining parts of the proof with respect to the velocity errors are similar to the ones in Lemma~\ref{lem:errn2}.

For an estimate on the gradient of the pressure error the projection equation (\ref{eqn:initial_projection}) is utilized:
\begin{align*}
 \|\nabla  e_p^1\|_0^2&=(\nabla( e_p^1- e_p^0), \nabla e_p^1)=(\delta_t r_h^1, \nabla e_p^1)+\left(\frac{\widetilde{\boldsymbol{e}}_u^1-\boldsymbol{e}_u^1}{\Delta t}, \nabla e_p^1\right)\\
 &\leq C \|\delta_t r_h^1\|_0^2 + \frac{C}{(\Delta t)^2} \|\widetilde{\boldsymbol{e}}_u^1\|_0^2+\frac{1}{2}\|\nabla e_p^1\|_0^2
 \lesssim C \|\delta_t r_h^1\|_0^2 + \frac{C}{(\Delta t)^2} \|\widetilde{\boldsymbol{e}}_u^1\|_0^2.
\end{align*}
Here we used that $\boldsymbol{e}_u^1$ is a $L^2(\Omega)$-projection of $\widetilde{\boldsymbol{e}}_u^1$
and hence
\begin{align*}
 \|\widetilde{\boldsymbol{e}}_u^1-\boldsymbol{e}_u^1\|_0^2\leq C( \|\widetilde{\boldsymbol{e}}_u^1\|_0^2 + \|\boldsymbol{e}_u^1\|_0^2)\leq C \|\widetilde{\boldsymbol{e}}_u^1\|_0^2.
\end{align*}
Therefore, $(\Delta t)^2\|\nabla e_p^1\|_0^2$ can be bounded by the right-hand side in (\ref{eqn:err1}), too.
\end{proof}

\subsection{Error Estimates after Initialization}
Next, we are interested in the discretization errors $\widetilde{\boldsymbol{e}}_u^n, \boldsymbol{e}_u^n$ and $e_p^n$ for $n\geq 2$. The approach is the same as for the initialization but does not yield quasi-optimal results with respect to the energy norm of the velocity.

\begin{lemma}\label{lem:errn2}
 For all $1\leq m \leq N$ the discretization error $\boldsymbol{e}_u^m=\boldsymbol{w}_h^m-\boldsymbol{u}_h^m$, $\widetilde{\boldsymbol{e}}_u^m=\boldsymbol{w}_h^m-\widetilde{\boldsymbol{u}}_h^m$ and $e_p^m=r_h^m-p_h^m$ can be bounded by
 \begin{align}
\nonumber
&  \|\widetilde{\boldsymbol{e}}_u^m\|_0^2+\|2 \boldsymbol{e}_u^{m}- 
\boldsymbol{e}_u^{m-1}\|_0^2+\frac{4}{3}(\Delta t)^2 \|\nabla e_p^m\|_0^2 \\ \nonumber
&\quad+\sum_{n=2}^m\left(\Delta t \nu \|\nabla \widetilde{\boldsymbol{e}}_u^n\|_0^2 + \Delta t\gamma \|\nabla 
\cdot  \widetilde{\boldsymbol{e}}_u^n\|_0^2+\|\delta_{tt}\boldsymbol{e}_u^{n}\|_0^2
 \right.\\&\qquad\qquad\left.\nonumber
+ 2\Delta t \sum_{M\in\mathcal 
M_h}\tau_M^n\|\kappa_M((\widetilde{\boldsymbol{u}}_M^n\cdot\nabla)\widetilde{\boldsymbol{e}}_u^n)\|_{0,M}^2\right)\\ \label{eqn:fullerror1}
&\lesssim C_G\left((\Delta t)^2(\|\boldsymbol{u}\|_{W^{1,\infty}(t_0,T;W^{k_u+1,2})}^2+\|p\|_{W^{1,\infty}(t_0,T;W^{k_p+1,2})}^2)
\right.\\&\qquad\qquad\left.\nonumber
+(\nu+\gamma)h^{2k_u+2-2z}\|\boldsymbol{u}\|_{L^\infty(t_0,T;W^{k_u+1,2})}^2\right.\\\nonumber&\qquad\qquad+\gamma^{-1}h^{2k_p+4-2z}\|p\|_{L^\infty(t_0,T;W^{k_p+1,2})}^2
+(\Delta t)^5 \|\boldsymbol{u}\|_{W^{3,\infty}(t_0,T;L^2)}\\
&\qquad\qquad\nonumber
 +\left(\frac{\nu+\gamma}{\nu}h^{2k_u}\|\boldsymbol{u}\|_{L^\infty(t_0,T;W^{k_u+1,2})}^2+\frac{h^{2k_p+2}}{\gamma\nu}\|p\|_{L^\infty(t_0,T;W^{k_p+1,2})}^2
 \right.\\&\qquad\qquad\quad\left.\left.\nonumber
+h^{2s}\|\boldsymbol{u}\|_{L^\infty(t_0,T;W^{s+1,2})}^2\right)\max_{1\leq n\leq m}\max_{M\in\mathcal M_h}\{\tau_M^n |\widetilde{\boldsymbol{u}}_M^n|^2\}\right).
\nonumber
 \end{align}
 where $z\in\{0,1\}$ and $C_G\sim\exp\left(\frac{T}{1-K_n}\right)$ provided
 \begin{align*}
 K_n&:= \left(\frac14+C\Delta t\Bigg[ |\boldsymbol{u}(t_n)|_{W^{1,\infty}} +  
h^{2z} |\boldsymbol{u}(t_n)|_{W^{1,\infty}}^2 
+  \frac{h^2}{\gamma} | \boldsymbol{u}(t_n)|^2_{W^{1,\infty}}
 \right.\\&\qquad\qquad\qquad\left. 
+ \frac{h^{2z-2}}{\gamma}\|\boldsymbol{u}(t_n)\|_{\infty}^2 
+ h^{2z-2}\|\widetilde{\boldsymbol{u}}_h^n\|_{\infty}^2 \Bigg]\right)<1
  \end{align*}
  and the continuous solution fulfills the regularity assumptions
  $\boldsymbol{u}\in W^{2,\infty}(t_0,T;L^2)\cap L^\infty(t_0,T;W^{k_u+1,2})$ and 
 $p \in W^{1,\infty}(t_0,T;H^1)\cap L^\infty(t_0,T;W^{k_p+1,2})$. 
\end{lemma}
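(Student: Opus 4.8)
The plan is to follow the energy method of Guermond \cite{guermond1999resultat}, adapted to the BDF2 pressure-correction splitting: I would test the discrete convection-diffusion step with the intermediate error $\widetilde{\boldsymbol{e}}_u^n$ and exploit the projection step to recover a telescoping estimate for the discretely solenoidal error $\boldsymbol{e}_u^n$. First I would derive the error equation by testing the continuous momentum equation at $t_n$ against $\boldsymbol{v}_h\in\boldsymbol{V}\!_h$, subtracting (\ref{eqn:full1}), and using the defining relation of the grad-div stabilized Stokes projection $(\boldsymbol{w}_h^n,r_h^n)$ to cancel the viscous and grad-div contributions of $\boldsymbol{\eta}_u^n$. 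The essential feature is that the discrete time derivative does not act on a single error sequence but on the mixed combination $3\widetilde{\boldsymbol{e}}_u^n-4\boldsymbol{e}_u^{n-1}+\boldsymbol{e}_u^{n-2}$, because the scheme advances the projected fields $\boldsymbol{u}_h^{n-1},\boldsymbol{u}_h^{n-2}$ but solves for $\widetilde{\boldsymbol{u}}_h^n$. The resulting right-hand side collects the BDF2 truncation error $D_t\boldsymbol{u}(t_n)-\partial_t\boldsymbol{u}(t_n)=O((\Delta t)^2)$, the temporal fluctuation $\widetilde{D}_t\boldsymbol{\eta}_u^n$ of the interpolation error, the pressure consistency stemming from $(p_h^{n-1},\nabla\cdot\boldsymbol{v}_h)$ versus $(p(t_n),\nabla\cdot\boldsymbol{v}_h)$ together with the projection pressure $r_h^n$, the nonlinear difference $c(\boldsymbol{u}(t_n);\boldsymbol{u}(t_n),\cdot)-c(\widetilde{\boldsymbol{u}}_h^n;\widetilde{\boldsymbol{u}}_h^n,\cdot)$, and the LPS consistency $s_h$ applied to the interpolated field.

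For the coupling I would use that the projection step (\ref{eqn:full2}) gives the pointwise relation $\boldsymbol{e}_u^n=\widetilde{\boldsymbol{e}}_u^n+\frac{2\Delta t}{3}\nabla(p_h^n-p_h^{n-1})=\widetilde{\boldsymbol{e}}_u^n+\frac{2\Delta t}{3}\nabla(\delta_t r_h^n-\delta_t e_p^n)$. Substituting $\widetilde{\boldsymbol{e}}_u^n=\boldsymbol{e}_u^n-\frac{2\Delta t}{3}\nabla(p_h^n-p_h^{n-1})$ into the mixed time-derivative term converts it into a genuine BDF2 difference of the single sequence $\boldsymbol{e}_u$ plus an explicit pressure-gradient remainder. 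Testing with $2\Delta t\,\widetilde{\boldsymbol{e}}_u^n$ and invoking the G-stability identity $(3\boldsymbol{e}_u^n-4\boldsymbol{e}_u^{n-1}+\boldsymbol{e}_u^{n-2},\boldsymbol{e}_u^n)=\tfrac12\bigl(\|\boldsymbol{e}_u^n\|_0^2+\|2\boldsymbol{e}_u^n-\boldsymbol{e}_u^{n-1}\|_0^2-\|\boldsymbol{e}_u^{n-1}\|_0^2-\|2\boldsymbol{e}_u^{n-1}-\boldsymbol{e}_u^{n-2}\|_0^2+\|\delta_{tt}\boldsymbol{e}_u^n\|_0^2\bigr)$ with $\delta_{tt}\boldsymbol{e}_u^n:=\boldsymbol{e}_u^n-2\boldsymbol{e}_u^{n-1}+\boldsymbol{e}_u^{n-2}$ produces, after summation, the boundary terms $\|2\boldsymbol{e}_u^m-\boldsymbol{e}_u^{m-1}\|_0^2$ and $\sum\|\delta_{tt}\boldsymbol{e}_u^n\|_0^2$ on the left. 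The pressure-gradient correction tested against itself yields the $\tfrac{4}{3}(\Delta t)^2\|\nabla e_p^m\|_0^2$ contribution; here I would use that $\boldsymbol{e}_u^n$ is the $L^2$-projection of $\widetilde{\boldsymbol{e}}_u^n$ onto $\boldsymbol{V}\!_h^{\,div}$ (Remark~\ref{rem:l2proj}), so the cross terms between $\nabla e_p^n$ and discretely solenoidal quantities are orthogonal.

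Next I would bound the dissipative, nonlinear and consistency parts. The choice $\boldsymbol{v}_h=\widetilde{\boldsymbol{e}}_u^n$ reproduces $\Delta t\,\nu\|\nabla\widetilde{\boldsymbol{e}}_u^n\|_0^2$, $\Delta t\,\gamma\|\nabla\cdot\widetilde{\boldsymbol{e}}_u^n\|_0^2$ and the LPS term on the left. For the convective difference I would use the skew-symmetry of $c$ and the splitting $\boldsymbol{u}(t_n)-\widetilde{\boldsymbol{u}}_h^n=\boldsymbol{\eta}_u^n+\widetilde{\boldsymbol{e}}_u^n$: the contributions linear in $\widetilde{\boldsymbol{e}}_u^n$ against itself are absorbed through the $W^{1,\infty}$ bound on $\boldsymbol{u}$ (giving the $|\boldsymbol{u}(t_n)|_{W^{1,\infty}}$ entries of $K_n$), while the streamline part is decomposed into its $\pi_M$-projection and its fluctuation; the fluctuation is absorbed by the LPS term, and the projected part is controlled by the inverse inequality (Assumption~\ref{assumption-A.2}) and the averaged-velocity bounds of Definition~\ref{def:streamline-direction}, producing the $h^{2z-2}\|\widetilde{\boldsymbol{u}}_h^n\|_\infty^2$ and $h^{2z}|\boldsymbol{u}(t_n)|_{W^{1,\infty}}^2$ entries of $K_n$. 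The interpolation errors $\boldsymbol{\eta}_u^n$ are estimated by (\ref{eqn:stokes_iterp})--(\ref{eqn:stokes_iterp_2}) and the LPS consistency applied to $\boldsymbol{u}(t_n)$ by Assumption~\ref{assumption-A.4}, producing the $h^{2s}$ terms weighted by $\max_M\{\tau_M^n|\widetilde{\boldsymbol{u}}_M^n|^2\}$. With the standing assumption $\gamma>\nu$ the grad-div term lets the pressure-interpolation contributions be controlled with a $1/\gamma$ rather than a $1/\nu$ weight, which is the source of quasi-robustness.

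Finally I would sum $n=2,\dots,m$, move all implicitly-absorbable terms (those carrying the factor $\tfrac14$ from the telescoping identity together with the $\Delta t[\cdots]$ contributions above) to the left, and use that $K_n<1$ leaves a strictly positive coefficient there; the data at $n=0,1$ are controlled by Lemma~\ref{lem:initial}. The discrete Gronwall lemma applied to the surviving $\sum\Delta t\,\|\boldsymbol{e}_u^n\|_0^2$ then yields the prefactor $C_G\sim\exp\bigl(T/(1-K_n)\bigr)$ and the asserted bound. I expect the main obstacle to be the bookkeeping in the time-derivative/projection coupling: one must rewrite the mixed difference $3\widetilde{\boldsymbol{e}}_u^n-4\boldsymbol{e}_u^{n-1}+\boldsymbol{e}_u^{n-2}$ as a clean BDF2 difference of $\boldsymbol{e}_u$ plus a pressure-gradient remainder, and then verify that this remainder recombines with the projection step to generate exactly the $(\Delta t)^2\|\nabla e_p^m\|_0^2$ term without leaving an uncontrolled $1/\nu$-dependence, while confirming that the absorption condition $K_n<1$ genuinely closes independently of $1/\nu$ thanks to $\gamma>\nu$.
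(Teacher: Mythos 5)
Your proposal follows essentially the same route as the paper: the error equation from the Stokes projection, the G-stability splitting of the mixed difference $3\widetilde{\boldsymbol{e}}_u^n-4\boldsymbol{e}_u^{n-1}+\boldsymbol{e}_u^{n-2}$ combined with the $L^2$-orthogonality of $\widetilde{\boldsymbol{e}}_u^n-\boldsymbol{e}_u^n$ to discretely solenoidal fields, the telescoping treatment of the pressure term via the projection step, the convective bound of Lemma~\ref{lem:convectiveterms} yielding the entries of $K_n$, and the discrete Gronwall argument. The only differences are organizational (you substitute the projection relation before applying the BDF2 identity, whereas the paper applies the splitting \eqref{eqn:splitting} directly to the mixed difference), and these are equivalent.
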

\begin{proof}
Plugging $\boldsymbol{w}_h^n$ into the fully discretized equation yields
\begin{align*}
 &(D_t\boldsymbol{w}_h^n , \boldsymbol{v}_h) 
+\nu(\nabla \boldsymbol{w}_h^n, \nabla \boldsymbol{v}_h) + c(\boldsymbol{w}_h^n, \boldsymbol{w}_h^n, \boldsymbol{v}_h)
 \\&\quad
+ \gamma (\nabla\cdot \boldsymbol{w}_h^n, \nabla\cdot \boldsymbol{v}_h) 
+ s_h(\boldsymbol{w}_h^n, \boldsymbol{w}_h^n, \boldsymbol{w}_h^n, \boldsymbol{v}_h)-(r_h^n, \nabla\cdot \boldsymbol{v}_h)
\\&
= (D_t\boldsymbol{w}_h^n , \boldsymbol{v}_h) 
+\nu(\nabla \boldsymbol{u}(t_n), \nabla \boldsymbol{v}_h) + c(\boldsymbol{w}_h^n, \boldsymbol{w}_h^n, \boldsymbol{v}_h)
 \\&\quad
+ \gamma (\nabla\cdot \boldsymbol{u}(t_n), \nabla\cdot \boldsymbol{v}_h) 
+ s_h(\boldsymbol{w}_h^n, \boldsymbol{w}_h^n, \boldsymbol{w}_h^n, \boldsymbol{v}_h)
.
\end{align*}
Hence, the error equation reads
\begin{align}
\begin{aligned}
 &\left(\frac{3 \widetilde{\boldsymbol{e}}_u^n-4\boldsymbol{e}_u^{n-1}+\boldsymbol{e}_u^{n-2}}{2 \Delta 
t}, \boldsymbol{v}_h\right) 
+\nu(\nabla \widetilde{\boldsymbol{e}}_u^n, \nabla \boldsymbol{v}_h) 
 \\&\quad 
+ \gamma (\nabla\cdot \widetilde{\boldsymbol{e}}_u^n, \nabla\cdot \boldsymbol{v}_h) 
+ s_h(\widetilde{\boldsymbol{u}}_h^n, \widetilde{\boldsymbol{e}}_u^n, \widetilde{\boldsymbol{u}}_h^n, \boldsymbol{v}_h)\\
&= -(\boldsymbol{f}^n, \boldsymbol{v}_h)
+(D_t\boldsymbol{w}_h^n, \boldsymbol{v}_h)
+\nu(\nabla \boldsymbol{u}(t_n), \nabla \boldsymbol{v}_h)
+ c(\widetilde{\boldsymbol{u}}_h^n, \widetilde{\boldsymbol{u}}_h^n, \boldsymbol{v}_h)
\\&\quad
+ s_h(\widetilde{\boldsymbol{u}}_h^n, \widetilde{\boldsymbol{u}}_h^n, \widetilde{\boldsymbol{u}}_h^n, \boldsymbol{v}_h) 
+ s_h(\widetilde{\boldsymbol{u}}_h^n, \widetilde{\boldsymbol{e}}_u^n, \widetilde{\boldsymbol{u}}_h^n, \boldsymbol{v}_h)+(r_h^n-p_h^{n-1}-p(t_n), \nabla\cdot \boldsymbol{v}_h)\\
&= -(\boldsymbol{f}^n, \boldsymbol{v}_h)
+(\partial_t \boldsymbol{u}(t_n), \boldsymbol{v}_h)
+\nu(\nabla \boldsymbol{u}(t_n), \nabla \boldsymbol{v}_h)
+ c(\boldsymbol{u}(t_n), \boldsymbol{u}(t_n), \boldsymbol{v}_h)
\\&\quad
-(p(t_n), \nabla\cdot \boldsymbol{v}_h)
-(D_t \boldsymbol{\eta}_u^n, \boldsymbol{v}_h)
+(D_t \boldsymbol{u}(t_n), \boldsymbol{v}_h)
-(\partial_t \boldsymbol{u}(t_n), \boldsymbol{v}_h)
  \\&\quad
+c(\widetilde{\boldsymbol{u}}_h^n, \widetilde{\boldsymbol{u}}_h^n, \boldsymbol{v}_h)- c(\boldsymbol{u}(t_n), \boldsymbol{u}(t_n), \boldsymbol{v}_h)\\
&\quad + s_h(\widetilde{\boldsymbol{u}}_h^n, \widetilde{\boldsymbol{e}}_u^n, \widetilde{\boldsymbol{u}}_h^n, \boldsymbol{v}_h)+ s_h(\widetilde{\boldsymbol{u}}_h^n, \widetilde{\boldsymbol{u}}_h^n, \widetilde{\boldsymbol{u}}_h^n, \boldsymbol{v}_h)
+(r_h^n-p_h^{n-1}, \nabla\cdot \boldsymbol{v}_h)\\
&=\underbrace{-(D_t\boldsymbol{\eta}_u^n, \boldsymbol{v}_h)}_{I}
+\underbrace{(D_t\boldsymbol{u}(t_n), \boldsymbol{v}_h)-(\partial_t \boldsymbol{u}(t_n), \boldsymbol{v}_h)}_{II}
\\&\quad
+ \underbrace{c(\widetilde{\boldsymbol{u}}_h^n, \widetilde{\boldsymbol{u}}_h^n, \boldsymbol{v}_h)- c(\boldsymbol{u}(t_n), \boldsymbol{u}(t_n), \boldsymbol{v}_h)}_{III}
 \\&\quad
+\underbrace{s_h(\widetilde{\boldsymbol{u}}_h^n, \widetilde{\boldsymbol{e}}_u^n, \widetilde{\boldsymbol{u}}_h^n, \boldsymbol{v}_h)+ s_h(\widetilde{\boldsymbol{u}}_h^n, \widetilde{\boldsymbol{u}}_h^n, \widetilde{\boldsymbol{u}}_h^n, \boldsymbol{v}_h)}_{IV}
+\underbrace{(r_h^n-p_h^{n-1}, \nabla\cdot \boldsymbol{v}_h)}_{V}
\end{aligned}
\end{align}
due to the fact that the pair $(\boldsymbol{u}(t_n),p(t_n))$ fulfills the (continuous) Navier-Stokes equations.\\
The terms with respect to time discretization can be bounded using Young's inequality according to
\begin{align}
\begin{aligned}
 I&=-\left(D_t \boldsymbol{\eta}_u^n, \widetilde{\boldsymbol{e}}_u^n\right)\leq C\|\widetilde{\boldsymbol{e}}_u^n\|_0\|D_t \boldsymbol{\eta}_u^n\|_0
\leq \frac{1}{32\Delta t}\|\widetilde{\boldsymbol{e}}_u^n\|_0^2+C\Delta t\|D_t \boldsymbol{\eta}_u^n\|_0^2\\
II&=(D_t \boldsymbol{u}(t_n)-\partial_t \boldsymbol{u}(t_n), \widetilde{\boldsymbol{e}}_u^n)
\leq C \Delta t \|D_t \boldsymbol{u}(t_n)-\partial_t \boldsymbol{u}(t_n)\|_0^2+\frac{1}{32\Delta t}\|\widetilde{\boldsymbol{e}}_u^n\|_0^2.
\end{aligned}
\end{align}
Noticing that the error equation for the projection step reads
\begin{align*}
 -(\frac{3\nabla\cdot\widetilde{\boldsymbol{e}}_u^n}{2\Delta t}, q_h)=-(\frac{3\boldsymbol{e}_u^n-3\widetilde{\boldsymbol{e}}_u^n}{2\Delta t}, \nabla q_h)=(\nabla(p_h^{n-1}-p_h^n), \nabla q_h), 
\end{align*}
we may write $V$ by choosing $q_h=r_h^n-p_h^{n-1}$ as
\begin{align*}
 -V&=-(r_h^n-p_h^{n-1}, \nabla\cdot \widetilde{\boldsymbol{e}}_u^n)
 =\frac{2\Delta t}{3}(\nabla (p_h^{n-1}-p_h^n), \nabla (r_h^n-p_h^{n-1}))\\
 &=\frac{2\Delta t}{3}(\nabla ((p_h^{n-1}-r_h^n)+(r_h^n-p_h^n)), \nabla (r_h^n-p_h^{n-1}))\\
 &=\frac{2\Delta t}{3}((\nabla (r_h^n-p_h^n), \nabla ((r_h^n-p_h^n)+(p_h^n-p_h^{n-1})))-\|\nabla(p_h^{n-1}-r_h^n)\|_0^2)\\
 &=\frac{2\Delta t}{3}(\|\nabla e_p^n\|_0^2+(\nabla ((r_h^n-p_h^{n-1})
  \\ &\quad
 +(p_h^{n-1}-p_h^n)), \nabla (p_h^n-p_h^{n-1}))-\|\nabla(p_h^{n-1}-r_h^n)\|_0^2)\\
 &=\frac{2\Delta t}{3}(\|\nabla e_p^n\|_0^2-\|\nabla(p_h^{n-1}-p_h^n)\|_0^2
  \\ &\quad
 +\underbrace{(\nabla (r_h^n-p_h^{n-1}), \nabla (p_h^n-p_h^{n-1}))}_{V}-\|\nabla(p_h^{n-1}-r_h^n)\|_0^2)\\
 &=\frac{\Delta t}{3}(\|\nabla e_p^n\|_0^2-\|\nabla(p_h^{n-1}-p_h^n)\|_0^2-\|\nabla(p_h^{n-1}-r_h^n)\|_0^2)\\
 &=\frac{\Delta t}{3}(\|\nabla e_p^n\|_0^2-\frac{9}{4(\Delta t)^2}\|\widetilde{\boldsymbol{e}}_u^n-\boldsymbol{e}_u^n\|_0^2-\|\nabla (e_p^{n-1}+(r_h^{n-1}-r_h^n))\|_0^2).
 \end{align*}
Using $(a+b)^2\leq (1+\epsilon)a^2+(1+\frac{1}{\epsilon})b^2$ we get for the last term ($\epsilon=\Delta t$)
\begin{align*}
&\|\nabla(e_p^{n-1}+(r_h^{n-1}-r_h^n))\|_0^2
\leq 
(1+\Delta t)\|\nabla e_p^{n-1}\|_0^2+\left(1+\frac{1}{\Delta t}\right)\|\nabla(r_h^{n-1}-r_h^n)\|_0^2
\end{align*}
and finally
\begin{align*}
 V \geq& \frac{\Delta t}{3}\|\nabla e_p^n\|_0^2-\frac{3}{4 \Delta t}\|\widetilde{\boldsymbol{e}}_u^n-\boldsymbol{e}_u^n\|_0^2-\frac{\Delta t}{3}(1+\Delta t)\|\nabla e_p^{n-1}\|_0^2
  \\ &
 -\frac{\Delta t}{3}\left(1+\frac{1}{\Delta t}\right)\|\nabla(r_h^{n-1}-r_h^n)\|_0^2.
\end{align*}
For $III$ we use the bound from Lemma~\ref{lem:convectiveterms}, i.e.
\begin{align*}
 &c(\boldsymbol{u}(t_n);\boldsymbol{u}(t_n), \widetilde{\boldsymbol{e}}_u^n) - c(\widetilde{\boldsymbol{u}}_h^n;\widetilde{\boldsymbol{u}}_h^n, \widetilde{\boldsymbol{e}}_u^n) \\& 
 \le \Bigg[ |\boldsymbol{u}(t_n)|_{W^{1,\infty}} + C h^{2z} |\boldsymbol{u}(t_n)|_{W^{1,\infty}}^2 
+  C\frac{h^2}{\gamma} | \boldsymbol{u}(t_n)|^2_{W^{1,\infty}}
 \\&\qquad\quad
+ C \frac{h^{2z-2}}{\gamma}\|\boldsymbol{u}(t_n)\|_{\infty}^2 
+ C  h^{2z-2} \|\widetilde{\boldsymbol{u}}_h^n\|_{\infty}^2 \Bigg] 
\| \widetilde{\boldsymbol{e}}_u^n\|_0^2
\\&\quad
+Ch^{-2z} \| \boldsymbol{\eta}_u^n\|_0^2  
+ 3 C h^{2-2z}||| \boldsymbol{\eta}_u^n|||^2_{LPS}  +   3 C h^{2-2z}||| \widetilde{\boldsymbol{e}}_u^n|||^2_{LPS}
\end{align*}
where $z\in\{0,1\}$.

The treatment of $IV$ is again as in the initialization:
\begin{align*}
IV&=s_h(\widetilde{\boldsymbol{u}}_h^n, \boldsymbol{w}_h^n, \widetilde{\boldsymbol{u}}_h^n, \widetilde{
\boldsymbol{e}}_u^n)
=s_h(\widetilde{\boldsymbol{u}}_h^n, \boldsymbol{u}(t_n), \widetilde{\boldsymbol{u}}_h^n, \widetilde{
\boldsymbol{e}}_u^n)-s_h(\widetilde{\boldsymbol{u}}_h^n, \boldsymbol{\eta}_u^n, \widetilde{\boldsymbol{u}}_h^n, \widetilde{
\boldsymbol{e}}_u^n)\\
s_h&(\widetilde{\boldsymbol{u}}_h^n, \boldsymbol{u}(t_n), \widetilde{\boldsymbol{u}}_h^n, \widetilde{
\boldsymbol{e}}_u^n)\\&\leq 
s_h(\widetilde{\boldsymbol{u}}_h^n, \widetilde{\boldsymbol{e}}_u^n, \widetilde{\boldsymbol{u}}_h^n, \widetilde{
\boldsymbol{e}}_u^n)^{1/2}
s_h(\widetilde{\boldsymbol{u}}_h^n, \boldsymbol{u}(t_n), \widetilde{\boldsymbol{u}}_h^n, \boldsymbol{u}(t_n))^{1/2} \\
&\leq \frac18 s_h(\widetilde{\boldsymbol{u}}_h^n, \widetilde{\boldsymbol{e}}_u^n, \widetilde{\boldsymbol{u}}_h^n, \widetilde{
\boldsymbol{e}}_u^n)
+C \max_{M\in\mathcal M_h}\{\tau_M^n |\widetilde{\boldsymbol{u}}_M^n|^2\} \sum_{M\in\mathcal M_h}\|\kappa_M(\nabla\boldsymbol{u}(t_n)\|_0^2)\\
s_h&(\widetilde{\boldsymbol{u}}_h^n, \boldsymbol{\eta}_u^n, \widetilde{\boldsymbol{u}}_h^n, \widetilde{
\boldsymbol{e}}_u^n)\\
&\leq 
s_h(\widetilde{\boldsymbol{u}}_h^n, \widetilde{\boldsymbol{e}}_u^n, \widetilde{\boldsymbol{u}}_h^n, \widetilde{
\boldsymbol{e}}_u^n)^{1/2}
s_h(\widetilde{\boldsymbol{u}}_h^n, \boldsymbol{\eta}_u^n, \widetilde{\boldsymbol{u}}_h^n, \boldsymbol{\eta}_u^n)^{1/2} \\
&\leq C \max_{M\in\mathcal M_h}\{\tau_M^n |\widetilde{\boldsymbol{u}}_M^n|^2\} \|\boldsymbol{\eta}_u^n\|_1^2 +\frac18 s_h(\widetilde{\boldsymbol{u}}_h^n, \widetilde{\boldsymbol{e}}_u^n, \widetilde{\boldsymbol{u}}_h^n, \widetilde{
\boldsymbol{e}}_u^n)
\end{align*}

Using the splitting (\ref{eqn:splitting}) from the appendix, the discrete time derivative can be written as
\begin{align*}
  &2( 3\widetilde{\boldsymbol{e}}_u^n-4\boldsymbol{e}_u^{n-1}+\boldsymbol{e}_u^{n-2}, \widetilde{\boldsymbol{e}}_u^n) \\
&=3\|\widetilde{\boldsymbol{e}}_u^n\|_0^2-3\|\widetilde{\boldsymbol{e}}_u^n-\boldsymbol{e}_u^n\|_0^2-2\|\boldsymbol{e}_u^n\|_0^2
+2( 3\boldsymbol{e}_u^n-4\boldsymbol{e}_u^{n-1}+\boldsymbol{e}_u^{n-2}, \widetilde{\boldsymbol{e}}_u^n- \boldsymbol{e}_u^n)
\\&\quad
+\|2\boldsymbol{e}_u^n-\boldsymbol{e}_u^{n-1}\|_0^2+\|\partial_{tt} \boldsymbol{e}_u^n\|_0^2
-\|\boldsymbol{e}_u^{n-1}\|_0^2-\|2\boldsymbol{e}_u^{n-1}-\boldsymbol{e}_u^{n-2}\|_0^2.
 \end{align*}
 where the term $2( 3\boldsymbol{e}_u^n-4\boldsymbol{e}_u^{n-1}+\boldsymbol{e}_u^{n-2}, \widetilde{\boldsymbol{e}}_u^n)$ vanishes due to the projection equation and the fact that $\boldsymbol{e}_u^n$ is discretely divergence-free.

Now, we can summarize the above estimates to obtain
\begin{align*}
&  \|\widetilde{\boldsymbol{e}}_u^m\|_0^2+\|2 \boldsymbol{e}_u^{m}- 
\boldsymbol{e}_u^{m-1}\|_0^2+\frac{4}{3}(\Delta t)^2 \|\nabla e_p^m\|_0^2 \\
&\quad+\sum_{n=2}^m\left(2\Delta t \nu \|\nabla \widetilde{\boldsymbol{e}}_u^n\|_0^2 + 4\Delta t\gamma \|\nabla 
\cdot  \widetilde{\boldsymbol{e}}_u^n\|_0^2+\|\delta_{tt}\boldsymbol{e}_u^{n}\|_0^2
 \right.\\&\qquad\qquad\left.
+ 3\Delta t \sum_{M\in\mathcal 
M_h}\tau_M^n\|\kappa_M((\widetilde{\boldsymbol{u}}_M^n\cdot\nabla)\widetilde{\boldsymbol{e}}_u^n)\|_{0,M}^2\right)
\\&
\leq \| \widetilde{\boldsymbol{e}}_u^1\|_0^2 +\|2 \boldsymbol{e}_u^1 - 
\boldsymbol{e}_u^0\|_0^2  + \frac{4}{3}(\Delta t)^2\|\nabla e_p^1\|_0^2 
\\&\quad
 +\sum_{n=2}^m\left\{K_n \|\widetilde{\boldsymbol{e}}_u^n\|_0^2
+\frac{4(\Delta t)^3}{3}\|\nabla e_p^n\|_0^2
+\frac{4(\Delta t)^2}{3}\left(1+\frac{1}{2\Delta t}\right)\|\nabla(r_h^{n-1}-r_h^n)\|_0^2
\right.\\&\qquad\qquad\quad
+C((\Delta t)^2) \|D_t\boldsymbol{\eta}_u^n\|_0^2
+C((\Delta t)^2)\|D_t \boldsymbol{u}(t_n)-\partial_t \boldsymbol{u}(t_n)\|_0^2
\\&\qquad\qquad\quad
+ Ch^{-2z}\Delta t \| \boldsymbol{\eta}_u^n\|_0^2 
+ 3 C \Delta t h^{2z-2}||| \boldsymbol{\eta}_u^n|||^2_{LPS}  
+ 3 C \Delta t h^{2z-2}||| \widetilde{\boldsymbol{e}}_u^n|||^2_{LPS} 
\\&\qquad\qquad\quad\left.
+C \frac{\Delta t}{\nu} \max_{M\in\mathcal M_h}\{\tau_M^n |\widetilde{\boldsymbol{u}}_M^n|^2\} (\nu\|\boldsymbol{\eta}_u^n\|_1^2+\nu\sum_{M\in\mathcal M_h}\|\kappa_M(\nabla\boldsymbol{u}(t_n)\|_0^2))\right\}.
\end{align*}

Due to the linearity of the Stokes problem and using the estimate (\ref{eqn:stokes_iterp}), we can bound the gradient of the pressure discretization by
\begin{align}
\begin{aligned}
 \|\nabla\delta_t \eta_p^n\|_0^2&\leq C (\nu+\gamma)h^{2k_u-2}\|\delta_t \boldsymbol{u}(t_n)\|_{W^{k_u+1,2}}^2+\gamma^{-1}h^{2k_p}\|\delta_t p(t_n)\|_{W^{k_p+1,2}}^2\\
 &\leq C(\Delta t)^2 ((\nu + \gamma)h^{2k_u-2}\|\boldsymbol{u}\|_{W^{1,\infty}(t_0,T;W^{k_u+1,2})}^2
\\&\qquad\qquad\quad  +\frac{h^{2k_p}}{\gamma}\|p\|_{W^{1,\infty}(t_0,T;W^{k_p+1,2})}^2)
 \label{eqn:diff_pressure_3}\\
\|\nabla \delta_t r_h^n\|_0^2&\leq \|\nabla \delta_t\eta_p^n \|_0^2+\|\nabla \delta_t p(t_n)\|_0^2
\\&
\leq C (\Delta t)^2(\|\boldsymbol{u}\|_{W^{1,\infty}(t_0,T;W^{k_u+1,2})}^2+\|p\|_{W^{1,\infty}(t_0,T;W^{k_p+1,2})}^2)
\end{aligned}
\end{align}
and the approximation of the time derivative by
\begin{align}
\|D_t \boldsymbol{u}(t_n)-\partial_t \boldsymbol{u}(t_n)\|_0^2 \leq C (\Delta t)^4 \|\boldsymbol{u}\|_{W^{3,\infty}(t_0,T;L^2)}^2.
\end{align}

Using the discrete Gronwall Lemma~\ref{lem:Gronwall_discrete} for $K_n\|\widetilde{\boldsymbol{e}}_u^n\|_0^2+\frac{4(\Delta t)^3}{3}\|\nabla e_p^n\|_0^2 $, the approximation properties of the interpolation operators (\ref{eqn:stokes_iterp})-(\ref{eqn:stokes_iterp_2}) and Assumption~\ref{assumption-A.4} for the fluctuation operators, we arrive at
\begin{align*}
&  \|\widetilde{\boldsymbol{e}}_u^m\|_0^2+\|2 \boldsymbol{e}_u^{m}- 
\boldsymbol{e}_u^{m-1}\|_0^2+\frac{4}{3}(\Delta t)^2 \|\nabla e_p^m\|_0^2 \\
&\quad+\sum_{n=2}^m\left(\Delta t \nu \|\nabla \widetilde{\boldsymbol{e}}_u^n\|_0^2 + \Delta t\gamma \|\nabla 
\cdot  \widetilde{\boldsymbol{e}}_u^n\|_0^2+\|\delta_{tt}\boldsymbol{e}_u^{n}\|_0^2
 \right.\\&\qquad\qquad\left.
+ 2\Delta t \sum_{M\in\mathcal 
M_h}\tau_M^n\|\kappa_M((\widetilde{\boldsymbol{u}}_M^n\cdot\nabla)\widetilde{\boldsymbol{e}}_u^n)\|_{0,M}^2\right)\\
&\lesssim C_G\left(
\| \widetilde{\boldsymbol{e}}_u^1\|_0^2 +\|2 \boldsymbol{e}_u^1 - 
\boldsymbol{e}_u^0\|_0^2  + \frac{4}{3}(\Delta t)^2\|\nabla e_p^1\|_0^2 
+(\Delta t)^2(\|\boldsymbol{u}\|_{W^{1,\infty}(t_0,T;W^{k_u+1,2})}^2
\right.\\&\qquad\qquad
+\|p\|_{W^{1,\infty}(t_0,T;W^{k_p+1,2})}^2)
+(\nu+\gamma)h^{2k_u+2-2z}\|\boldsymbol{u}\|_{L^\infty(t_0,T;W^{k_u+1,2})}^2
 \\&\qquad\qquad
+\gamma^{-1}h^{2k_p+4-2z}\|p\|_{L^\infty(t_0,T;W^{k_p+1,2})}^2
+(\Delta t)^5 \|\boldsymbol{u}\|_{W^{3,\infty}(t_0,T;L^2)}
\\&\qquad\qquad
 +\left((\nu+\gamma)h^{2k_u}\|\boldsymbol{u}\|_{L^\infty(t_0,T;W^{k_u+1,2})}^2+\frac{h^{2k_p+2}}{\gamma}\|p\|_{L^\infty(t_0,T;W^{k_p+1,2})}^2\right.\\
&\qquad\qquad\quad\left.\left.+\nu h^{2s}\|\boldsymbol{u}\|_{L^\infty(t_0,T;W^{s+1,2})}^2\right)\left(\frac{\max_{1\leq n\leq m}\max_{M\in\mathcal M_h}\{\tau_M^n |\widetilde{\boldsymbol{u}}_M^n|^2\}}{\nu}+1\right)\right).
\end{align*}
With the initial error estimates from Lemma~\ref{lem:initial}, we finally obtain
\begin{align*}
&  \|\widetilde{\boldsymbol{e}}_u^m\|_0^2+\|2 \boldsymbol{e}_u^{m}- 
\boldsymbol{e}_u^{m-1}\|_0^2+\frac{4}{3}(\Delta t)^2 \|\nabla e_p^m\|_0^2 \\
&\quad+\sum_{n=2}^m\left(\Delta t \nu \|\nabla \widetilde{\boldsymbol{e}}_u^n\|_0^2 + \Delta t\gamma \|\nabla 
\cdot  \widetilde{\boldsymbol{e}}_u^n\|_0^2+\|\delta_{tt}\boldsymbol{e}_u^{n}\|_0^2
 \right.\\&\qquad\qquad\left.
+ 2\Delta t \sum_{M\in\mathcal 
M_h}\tau_M^n\|\kappa_M((\widetilde{\boldsymbol{u}}_M^n\cdot\nabla)\widetilde{\boldsymbol{e}}_u^n)\|_{0,M}^2\right)\\
&\lesssim C_G\left((\Delta t)^2(\|\boldsymbol{u}\|_{W^{1,\infty}(t_0,T;W^{k_u+1,2})}^2+\|p\|_{W^{1,\infty}(t_0,T;W^{k_p+1,2})}^2)
\right.\\&\qquad\qquad
+(\nu+\gamma)h^{2k_u+2-2z}\|\boldsymbol{u}\|_{L^\infty(t_0,T;W^{k_u+1,2})}^2
\\&\qquad\qquad+\gamma^{-1}h^{2k_p+4-2z}\|p\|_{L^\infty(t_0,T;W^{k_p+1,2})}^2
+(\Delta t)^5 \|\boldsymbol{u}\|_{W^{3,\infty}(t_0,T;L^2)}\\
&\qquad\qquad
 +\left((\nu+\gamma)h^{2k_u}\|\boldsymbol{u}\|_{L^\infty(t_0,T;W^{k_u+1,2})}^2+\frac{h^{2k_p+2}}{\gamma}\|p\|_{L^\infty(t_0,T;W^{k_p+1,2})}^2
 \right.\\&\qquad\qquad\quad\left.\left.
+\nu h^{2s}\|\boldsymbol{u}\|_{L^\infty(t_0,T;W^{s+1,2})}^2\right)\left(\frac{\max_{1\leq n\leq m}\max_{M\in\mathcal M_h}\{\tau_M^n |\widetilde{\boldsymbol{u}}_M^n|^2\}}{\nu}+1\right)\right).
\end{align*}
The Gronwall constant $C_G$ in the above estimates is given according to
\begin{align}
 C_G\sim \exp\left(\frac{T}{1-K_n}\right)
\end{align}
and we require $K_n< 1$. 
\end{proof}
In combination with the interpolation error estimates we can now state an error estimate for the full error.

\begin{theorem}\label{thm:ansatz_guermond}
Let the continuous solutions fulfill the regularity assumptions 
\begin{align*}
\boldsymbol{u}&\in W^{2,\infty}(t_0,T;L^2)\cap L^\infty(t_0,T;W^{k_u+1,2}) \\ p &\in W^{1,\infty}(t_0,T;H^1)\cap L^\infty(t_0,T;W^{k_p+1,2}).
\end{align*}
\begin{itemize}
  \item 
 If $\max_{1\leq n\leq m}\max_{M\in\mathcal M_h}\{\tau_M^n |\widetilde{\boldsymbol{u}}_M^n|^2\}\lesssim \nu h^{2k_u-2s}$, $\gamma\sim  1$ and $\Delta t\lesssim C$, the total error 
 $\widetilde{\boldsymbol{\zeta}}_u^n=\boldsymbol{u}(t_n)-\widetilde{\boldsymbol{u}}_h^n$
 can be bounded as
 \begin{align*}
 &\|\widetilde{\boldsymbol{\zeta}}_u^m\|_0^2+\Delta t\sum_{n=2}^m\left(\nu \|\nabla \widetilde{\boldsymbol{\zeta}}_u^n\|_0^2 + \gamma \|\nabla 
\cdot  \widetilde{\boldsymbol{\zeta}}_u^n\|_0^2
\right.\\&\qquad\qquad\qquad\qquad\left.
+\sum_{M\in\mathcal 
M_h}\tau_M^n\|\kappa_M((\widetilde{\boldsymbol{u}}_M^n\cdot\nabla)\widetilde{\boldsymbol{\zeta}}_u^n)\|_{0,M}^2\right)\\
  &\lesssim C_G\left((\Delta t)^2 \|p\|_{W^{1,\infty}(t_0,T;H^2)}^2
+h^{2k_u}\|\boldsymbol{u}\|_{L^\infty(t_0,T;W^{k_u+1,2})}^2
 \right.\\&\qquad\qquad\left.
+h^{2k_p+2}\|p\|_{L^\infty(t_0,T;W^{k_p+1,2})}^2\right).
\end{align*}
\item If $\max_{1\leq n\leq m}\max_{M\in\mathcal M_h}\{\tau_M^n |\widetilde{\boldsymbol{u}}_M^n|^2\}\lesssim \nu h^{2k_u-2s+2}$, $\gamma\sim 1$ and $\Delta t\lesssim h^2$ we can bound the total energy error according to
  \begin{align*}
  \|\widetilde{\boldsymbol{\zeta}}_u^m\|_0^2
  &\lesssim C_G\left((\Delta t)^2 \|p\|_{W^{1,\infty}(t_0,T;H^2)}^2
+h^{2k_u+2}\|\boldsymbol{u}\|_{L^\infty(t_0,T;W^{k_u+1,2})}^2
 \right.\\&\qquad\qquad\left.
+h^{2k_p+4}\|p\|_{L^\infty(t_0,T;W^{k_p+1,2})}^2\right).
 \end{align*}
 \end{itemize}
\end{theorem}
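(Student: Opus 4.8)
The plan is to read off the total error from the decomposition $\widetilde{\boldsymbol{\zeta}}_u^n=\boldsymbol{\eta}_u^n+\widetilde{\boldsymbol{e}}_u^n$ of (\ref{eqn:error_tildeu}), controlling the discrete part $\widetilde{\boldsymbol{e}}_u^n$ by Lemma~\ref{lem:errn2} and the interpolation part $\boldsymbol{\eta}_u^n$ by the Stokes estimates (\ref{eqn:stokes_iterp})--(\ref{eqn:stokes_iterp_2}). For each seminorm occurring on the left-hand side I would apply a triangle inequality, e.g.
\begin{align*}
\nu\|\nabla\widetilde{\boldsymbol{\zeta}}_u^n\|_0^2+\gamma\|\nabla\cdot\widetilde{\boldsymbol{\zeta}}_u^n\|_0^2
&\le 2\big(\nu\|\nabla\boldsymbol{\eta}_u^n\|_0^2+\gamma\|\nabla\cdot\boldsymbol{\eta}_u^n\|_0^2\big)\\
&\quad+2\big(\nu\|\nabla\widetilde{\boldsymbol{e}}_u^n\|_0^2+\gamma\|\nabla\cdot\widetilde{\boldsymbol{e}}_u^n\|_0^2\big),
\end{align*}
and split the final-time $L^2$-norm and the LPS-seminorm analogously. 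The $\widetilde{\boldsymbol{e}}_u$-contributions are then exactly the quantities already bounded in (\ref{eqn:fullerror1}), so after summing the interpolation contributions in time (using $\Delta t\sum_{n=2}^m 1\le T$ and passing to $L^\infty(t_0,T;\cdot)$ norms) only the $\boldsymbol{\eta}_u$-terms remain to be estimated.

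For the interpolation contributions I would invoke (\ref{eqn:stokes_iterp_2}) for $\|\boldsymbol{\eta}_u^m\|_0^2$ and (\ref{eqn:stokes_iterp}) for $\nu\|\nabla\boldsymbol{\eta}_u^n\|_0^2+\gamma\|\nabla\cdot\boldsymbol{\eta}_u^n\|_0^2$. The only term needing extra care is the LPS-seminorm of $\boldsymbol{\eta}_u^n$: since $\kappa_M$ is an $L^2$-orthogonal projection (hence a contraction) and $\widetilde{\boldsymbol{u}}_M^n$ is constant on $M$,
\begin{align*}
\sum_{M\in\mathcal M_h}\tau_M^n\|\kappa_M((\widetilde{\boldsymbol{u}}_M^n\cdot\nabla)\boldsymbol{\eta}_u^n)\|_{0,M}^2
\le\max_{M\in\mathcal M_h}\{\tau_M^n|\widetilde{\boldsymbol{u}}_M^n|^2\}\,\|\nabla\boldsymbol{\eta}_u^n\|_0^2,
\end{align*}
after which $\|\nabla\boldsymbol{\eta}_u^n\|_0^2\le\nu^{-1}\big(\nu\|\nabla\boldsymbol{\eta}_u^n\|_0^2\big)$ is again bounded by (\ref{eqn:stokes_iterp}). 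This is the point where the prescribed scaling $\max\{\tau_M^n|\widetilde{\boldsymbol{u}}_M^n|^2\}\lesssim\nu h^{2k_u-2s}$ (resp.\ $\lesssim\nu h^{2k_u-2s+2}$) enters: it cancels the $\nu^{-1}$, and, since $s\le k_u$, it turns every LPS-induced summand into a power of $h$ at least as high as the leading $h^{2k_u}$ and $h^{2k_p+2}$ (resp.\ $h^{2k_u+2}$ and $h^{2k_p+4}$) terms; the very same scaling tames the last bracket of (\ref{eqn:fullerror1}).

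The remaining task is to specialize Lemma~\ref{lem:errn2} with $\gamma\sim1$. For the first assertion I would take $z=1$; then $h^{2z-2}=1$, so the bracket defining $K_n$ is uniformly bounded and $K_n<1$ (hence $C_G\lesssim1$) follows from $\Delta t\lesssim C$ alone, and the terms of (\ref{eqn:fullerror1}) collapse to the three stated contributions (the $(\Delta t)^5$-term being subordinate to $(\Delta t)^2$). For the second assertion I would instead take $z=0$, which upgrades the spatial orders by one power of $h$ through (\ref{eqn:stokes_iterp_2}) and terms two and three of (\ref{eqn:fullerror1}); the cost is that $h^{2z-2}=h^{-2}$ now multiplies $\|\widetilde{\boldsymbol{u}}_h^n\|_\infty^2$ inside $K_n$, so keeping the Gronwall constant bounded forces $\Delta t\,h^{-2}\|\widetilde{\boldsymbol{u}}_h^n\|_\infty^2\lesssim1$, i.e.\ precisely the restriction $\Delta t\lesssim h^2$.

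The step I expect to be the main obstacle is the bookkeeping in this last specialization: one must verify that each of the three summands in the LPS-parameter term of (\ref{eqn:fullerror1}), after multiplication by $\nu^{-1}$ and the chosen $\tau$-scaling, is genuinely dominated by the target rates, using $s\le k_u$ and $\gamma\sim1$; and one must check that with $z=0$ the only mechanism keeping $C_G$ finite is the time-step restriction $\Delta t\lesssim h^2$. The latter is exactly what breaks quasi-robustness and explains why the improved spatial order in the second case is not obtained for free.
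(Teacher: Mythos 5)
Your proposal is correct and follows essentially the same route as the paper: decompose $\widetilde{\boldsymbol{\zeta}}_u^n=\boldsymbol{\eta}_u^n+\widetilde{\boldsymbol{e}}_u^n$, feed the discrete part into Lemma~\ref{lem:errn2}, control the interpolation part via (\ref{eqn:stokes_iterp})--(\ref{eqn:stokes_iterp_2}), bound the LPS seminorm of $\boldsymbol{\eta}_u^n$ by $\max_M\{\tau_M^n|\widetilde{\boldsymbol{u}}_M^n|^2\}\|\nabla\boldsymbol{\eta}_u^n\|_0^2$, and then select $z=1$ (giving $\Delta t\lesssim C$) respectively $z=0$ (giving $\Delta t\lesssim h^2$) together with the stated $\tau$-scalings. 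The bookkeeping you flag as the main obstacle is exactly what the paper's proof carries out, and your identification of the $z=0$ mechanism behind the $\Delta t\lesssim h^2$ restriction matches the paper's reasoning.
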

\begin{proof}
 The claim follows from the combination of the interpolation properties and the estimate (\ref{eqn:fullerror1}) except for the nonlinear stabilization. For this last term we observe
   \begin{align*}
&\Delta t \sum_{n=1}^m\sum_{M\in\mathcal 
M_h}\tau_M^n\|\kappa_M((\widetilde{\boldsymbol{u}}_h^n\cdot\nabla)\boldsymbol{\zeta}_u^n)\|_{0,M}^2\\
&\leq
C \Delta t \sum_{n=1}^m\sum_{M\in\mathcal M_h} \left( 
\tau_M^n\|\kappa_M((\widetilde{\boldsymbol{u}}_h^n\cdot\nabla)\widetilde{\boldsymbol{e}}_u^n)\|_{0,M}^2
+ \tau_M^n\|\kappa_M((\widetilde{\boldsymbol{u}}_h^n\cdot\nabla)\boldsymbol{\eta}_u^n)\|_{0,M}^2 \right)\\
&\leq
C \Delta t \sum_{n=1}^m\sum_{M\in\mathcal M_h} \left( 
\tau_M^n\|\kappa_M((\widetilde{\boldsymbol{u}}_h^n\cdot\nabla)\widetilde{\boldsymbol{e}}_u^n)\|_{0,M}^2
+  \tau_M^n|\widetilde{\boldsymbol{u}}_M^n|^2\|\nabla\boldsymbol{\eta}_u^n\|_{0,M}^2\right) \\
&\lesssim C_G\left((\Delta t)^2 \|p\|_{W^{1,\infty}(t_0,T;H^2)}^2
+(\nu+\gamma)h^{2k_u+2-2z}\|\boldsymbol{u}\|_{L^\infty(t_0,T;W^{k_u+1,2})}^2
 \right.\\&\qquad\qquad
+\gamma^{-1}h^{2k_p+4-2z}\|p\|_{L^\infty(t_0,T;W^{k_p+1,2})}^2
+(\Delta t)^5 \|\boldsymbol{u}\|_{W^{3,\infty}(t_0,T;L^2)}
\\&\qquad\qquad
 +\left(\frac{\nu+\gamma}{\nu}h^{2k_u}\|\boldsymbol{u}\|_{L^\infty(t_0,T;W^{k_u+1,2})}^2+\frac{h^{2k_p+2}}{\gamma\nu}\|p\|_{L^\infty(t_0,T;W^{k_p+1,2})}^2\right.\\
&\qquad\qquad\qquad\left.\left.+h^{2s}\|\boldsymbol{u}\|_{L^\infty(t_0,T;W^{s+1,2})}^2\right)\max_{1\leq n\leq m}\max_{M\in\mathcal M_h}\{\tau_M^n |\widetilde{\boldsymbol{u}}_M^n|^2\}\right).
 \end{align*}
 For the first estimate we are satisfied with choosing $z=1$ due to the fact that then the order of convergence for the discretization error matches the one for the approximation error if $\max_{1\leq n\leq m}\max_{M\in\mathcal M_h}\{\tau_M^n |\widetilde{\boldsymbol{u}}_M^n|^2\}\lesssim \nu h^{2k_u-2s}$. The restriction on $K_n$ here simplifies to $\Delta t\lesssim C$.
 For the second estimate we choose in the above estimate and in (\ref{eqn:fullerror1}) $z=0$ to get the same order of convergence for the discretization as for the approximation error with respect to spatial discretization. This also means that we have to choose the LPS SU parameter according to $\max_{1\leq n\leq m}\max_{M\in\mathcal M_h}\{\tau_M^n |\widetilde{\boldsymbol{u}}_M^n|^2\}\lesssim \nu h^{2k_u-2s+2}$. The requirement on $K_n$ can only be satisfied if $\Delta t \lesssim h^{2-2z}$ which here means $\Delta t \lesssim h^2$.\\ 
\end{proof}

\begin{remark}
The above error estimates are quasi-robust both for the LPS and the energy norm, i.e. the right-hand sides do not depend explicitly on $\nu$. With respect to the LPS norm the results are quasi-optimal in space and time. However, considering the $L^2(\boldsymbol{u})$ norm the temporal order of convergence is suboptimal due 
to the estimate (\ref{eqn:diff_pressure_3}) for the gradient of the pressure error but quasi-optimal in space.
\end{remark}

\begin{remark}
For inf-sup stable pairs, we usually have $k_u=k_p+1$ and this is also what an equilibration of the spatial rates of convergence in
Theorem~\ref{thm:ansatz_guermond} suggests. To obtain the same error magnitude with respect to the time discretization for the LPS error $\Delta t\sim h^{k_u}$ and for the energy error $\Delta t\sim h^{k_u+1}$ should be chosen.
\end{remark}

\begin{remark}
Standard stability estimates for the semi-discretized Navier-Stokes problem do not imply the condition $\|\boldsymbol{u}_h\|_{l^\infty(t_0,T;[L^\infty(\Omega)]^d)}\lesssim C(\boldsymbol{u},p)$ that would be necessary for a Gronwall constant independent from the discrete solution in Lemma~\ref{lem:convectiveterms}. Hence, the results give no a priori bounds. Avoiding to use the discrete solution on the right-hand side of the estimate for the convective terms, Lemma~\ref{lem:convectiveterms}, would lead to mesh width restrictions of the form 
\begin{align*} 
Re_M  &= \frac{h_M\|\boldsymbol{u}_h\|_{\infty,M}}{\nu} \le  \frac{1}{\sqrt{\nu}}
\end{align*}
similar to the ones obtained in \cite{arndt2014local}.\\
Another way, different from the approach taken here, to circumvent this mesh width restrictions would be to consider only fine and coarse ansatz spaces that fulfill
the compatibility condition \cite{matthies}:
\begin{assumption}\label{ass:compcoarsefine}
For $Y_{h,k_u}(M) = \{v_h|_M  \colon~v_h\in Y_{h,k_u},~v_h=0 \mbox{ on } \Omega\setminus M\}$ 
there exists $\widetilde{\beta}_{u,h}>0$ such that 
\begin{align}  \label{eqn:L2-infsup}
 \inf_{\boldsymbol{w}_h \in D_M} \sup_{\boldsymbol{v}_h\in [Y_{h,k_u}(M)]^d} 
         \frac{(\boldsymbol{v}_h,\boldsymbol{w}_h)_M}{\|\boldsymbol{v}_h\|_{0,M} \|\boldsymbol{w}_h\|_{0,M}} \geq \widetilde{\beta}_{u,h}.
\end{align}
\end{assumption}
\noindent For the sake of brevity, we omit restating the results in this framework.
\end{remark}

\section{Quasi-Optimal Errors in Time}\label{sec:temporal-spatial}

In this section, we aim for improving the rate of convergence with respect to the temporal discretization. 
In comparison to \cite{guermond1999resultat}, the dependence on the diffusion coefficient $\nu$ is critically considered.\\

The strategy that we consider here is to first consider the temporal discretization and afterwards its spatial approximation.
For the discretization in time, we in fact do not consider the Navier-Stokes problem but take the convective term from the continuous problem as a force term.
By the triangle inequality, the total error is then bounded by
\begin{align}
 \| \boldsymbol{\mathcal{U}}-\boldsymbol{\mathcal{U}}_h\| \leq \|\boldsymbol{\mathcal{U}}-\boldsymbol{\mathcal{W}}_t\|+\|
\boldsymbol{\mathcal{W}}_t-\boldsymbol{\mathcal{U}}_h\|.
\label{eqn:ts_strategy_error}
\end{align}

\begin{assumption}\label{ass:regularity_ts}
The temporal discretized quantities fulfill for any $1\le n\le N$ the regularity assumptions
\begin{align}
 \boldsymbol{w}_t^n &\in [W^{k_u+1,2}(\Omega)]^d,& r_t^n&\in W^{k_p+1,2}(\Omega), &
 \widetilde{\boldsymbol{u}}_h^n &\in [H^1(\Omega)]^d,& p_h^n&\in H^1(\Omega)
\end{align}
and we assume for the continuous solution
\begin{align*}
 \boldsymbol{u} &\in L^\infty(t_0,T;[W^{k_u+1,2}(\Omega)]^d), \quad \partial_t \boldsymbol{u} \in L^\infty(t_0,T;[H^2(\Omega)]^d), \\
 p &\in L^2(t_0,T; W^{k_p+1,2}(\Omega))\cap H^2(t_0,T; H^1(\Omega)).
\end{align*}
\end{assumption}

\subsection{Time Discretization}\label{sec:errors_timedisc}
We consider a grad-div stabilized, but spatially continuous problem:
\begin{center}
\begin{myminipage}
 Find $\widetilde{\boldsymbol{w}}_t^n\in\boldsymbol{V}, \boldsymbol{w}_t^n\in\boldsymbol{V}^{div}$ and $r_t^n\in Q$ 
 such that
\begin{align}
 \begin{aligned}
   &\left(\frac{3 \widetilde{\boldsymbol{w}}_t^n-4\boldsymbol{w}_t^{n-1}+\boldsymbol{w}_t^{n-2}}{2 \Delta 
t}, \boldsymbol{v}\right) +\nu(\nabla \widetilde{\boldsymbol{w}}_t^n, \nabla \boldsymbol{v}) 
+ \gamma(\nabla \cdot \widetilde{\boldsymbol{w}}_t^n, \nabla \cdot\boldsymbol{v}) 
\\&\quad
= (\boldsymbol{f}^n, \boldsymbol{v}) -(\nabla r_t^{n-1}, \boldsymbol{v})
- c(\boldsymbol{u}(t_n);\boldsymbol{u}(t_n),{\boldsymbol{v}})\\
&\widetilde{\boldsymbol{w}}_t^n|_{\partial\Omega}=0
\label{eqn:ts_time1_aux}
 \end{aligned}
 \end{align}
 \begin{align}
 \begin{aligned}
  \left(\frac{3 \boldsymbol{w}_t^n-3\widetilde{\boldsymbol{w}}_t^n}{2 \Delta t}+\nabla (r_t^{n}-r_t^{n-1}), 
\boldsymbol{v}\right)&=0\\
(\nabla \cdot \boldsymbol{w}_t^n, q)&=0\\
\boldsymbol{w}_t^n|_{\partial\Omega}&=0
\label{eqn:ts_time2_aux}
 \end{aligned}
\end{align}
holds for all $\boldsymbol{v}\in \boldsymbol{V}$ and $q\in Q$.
\end{myminipage}
\end{center}
 Compared to the fully discrete case where we test the projection equation in $\boldsymbol{Y}\!_h$
 we do not need an auxiliary space due to the Helmholtz decomposition
 \begin{align*}
  \boldsymbol{V}=\boldsymbol{V}^{div}\oplus\nabla Q.
 \end{align*}
\begin{definition}
We denote the errors for this semi-discretization by 
\begin{align}
\label{eqn:error_semi2t}
\boldsymbol{\eta}_{u,t}^n:= \boldsymbol{u}(t_n)- \boldsymbol{w}_t^n \quad
\widetilde{\boldsymbol{\eta}}_{u,t}^n := \boldsymbol{u}(t_n)- \widetilde{\boldsymbol{w}}_t^n, \quad \eta_{p,t}^n:=p(t_n)-r_t^n.
\end{align}
\end{definition}

For the error estimates we refer to our considerations in \cite{technicalreportAD2015}. There we prove quasi-robust and quasi-optimal
error estimates for the time discretization alone. The steps taken there are again
quite similar to \cite{guermond1999resultat}. The idea is to first achieve a bound on the propagation error $\|\delta_t \widetilde{\boldsymbol{\eta}}_u^n\|_0$, on $\|\widetilde{\boldsymbol{\eta}}_u^n-\boldsymbol{\eta}_u^n\|_0$ and on $\sqrt{\nu}\|\nabla \widetilde{\boldsymbol{\eta}}_u^n\|_0+\sqrt{\gamma}\|\nabla\cdot \widetilde{\boldsymbol{\eta}}_u^n\|_0$. 
 With these preparations, the pressure term that caused the suboptimal temporal results can be eliminated in the error equation by testing with an inverse Stokes operator applied to $\widetilde{\boldsymbol{e}}_u^n$. This then gives the desired estimate on $\|\widetilde{\boldsymbol{e}}_u^n\|_0$.

\begin{theorem}
 The error due to time discretization can be bounded for all $m\leq N$ according to
 \begin{align*}
  \|\widetilde{\boldsymbol{\eta}}_{u,t}^m\|_0^2&\leq C (\Delta t)^4, &
  \Delta t\sum_{n=1}^m \nu \|\nabla\widetilde{\boldsymbol{\eta}}_{u,t}^m\|_0^2
  +\gamma \|\nabla\cdot\widetilde{\boldsymbol{\eta}}_{u,t}^m\|_0^2&\leq C(\Delta t)^2.
 \end{align*}
 where the $C$ depends only depends on Sobolev norms of continuous solution and not explicitly on $\nu$.
\end{theorem}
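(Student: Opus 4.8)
The plan is to adapt Guermond's BDF2 pressure-correction analysis to the grad-div stabilized, spatially continuous setting, while tracking the dependence on $\nu$. First I would set up the error equations by inserting the exact pair $(\boldsymbol{u}(t_n),p(t_n))$ into (\ref{eqn:ts_time1_aux})--(\ref{eqn:ts_time2_aux}) and subtracting. Since the convective term is carried over as a datum, $c(\boldsymbol{u}(t_n);\boldsymbol{u}(t_n),\cdot)$ cancels identically, so the momentum error equation for $\widetilde{\boldsymbol{\eta}}_{u,t}^n$ is a linear grad-div stabilized Stokes system forced only by the BDF2 truncation residual $D_t\boldsymbol{u}(t_n)-\partial_t\boldsymbol{u}(t_n)=O((\Delta t)^2)$ and by the incremental pressure lag $\nabla(p(t_n)-r_t^{n-1})$, with the projection equation (\ref{eqn:ts_time2_aux}) supplying $\nabla\cdot\boldsymbol{w}_t^n=0$.

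\emph{Energy estimate.} Testing the momentum error equation with $\widetilde{\boldsymbol{\eta}}_{u,t}^n$ and applying the BDF2 G-stability identity (the splitting (\ref{eqn:splitting})) turns the three-level difference into telescoping squared $L^2$ norms plus the nonnegative term $\|\delta_{tt}(\cdot)\|_0^2$. The pressure lag is handled exactly as term $V$ of Lemma~\ref{lem:errn2}: using (\ref{eqn:ts_time2_aux}) it is rewritten as a difference of $\|\nabla(\cdot)\|_0^2$ quantities together with the increment $\|\nabla(r_t^n-r_t^{n-1})\|_0^2=O((\Delta t)^2)$ (by the regularity of $p$) and the gap $\|\widetilde{\boldsymbol{\eta}}_{u,t}^n-\boldsymbol{\eta}_{u,t}^n\|_0^2$. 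A discrete Gronwall argument (Lemma~\ref{lem:Gronwall_discrete}) then yields the intermediate bounds the surrounding text advertises: control of $\|\delta_t\widetilde{\boldsymbol{\eta}}_{u,t}^n\|_0$, of $\|\widetilde{\boldsymbol{\eta}}_{u,t}^n-\boldsymbol{\eta}_{u,t}^n\|_0$, and of $\sqrt{\nu}\|\nabla\widetilde{\boldsymbol{\eta}}_{u,t}^n\|_0+\sqrt{\gamma}\|\nabla\cdot\widetilde{\boldsymbol{\eta}}_{u,t}^n\|_0$. Summing the viscous and grad-div dissipation over $n$ already produces the second assertion, $\Delta t\sum_{n=1}^m(\nu\|\nabla\widetilde{\boldsymbol{\eta}}_{u,t}^n\|_0^2+\gamma\|\nabla\cdot\widetilde{\boldsymbol{\eta}}_{u,t}^n\|_0^2)\le C(\Delta t)^2$; the constant stays $\nu$-independent because grad-div stabilization, not the viscosity, controls the divergence and pressure contributions.

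\emph{Duality bootstrap for the sharp $L^2$ rate.} The naive energy argument loses one power of $\Delta t$ through the pressure increment, so to reach $(\Delta t)^4$ I would test the momentum error equation not with $\widetilde{\boldsymbol{\eta}}_{u,t}^n$ but with $\boldsymbol{\Phi}^n:=S^{-1}\widetilde{\boldsymbol{\eta}}_{u,t}^n$, where $S$ is the steady grad-div stabilized Stokes operator; on the continuous level the Helmholtz decomposition $\boldsymbol{V}=\boldsymbol{V}^{div}\oplus\nabla Q$ makes this well defined and $\boldsymbol{\Phi}^n$ (weakly) solenoidal. The decisive simplification is that the pressure lag then pairs against a divergence-free field, so $(\nabla(p(t_n)-r_t^{n-1}),\boldsymbol{\Phi}^n)=-(p(t_n)-r_t^{n-1},\nabla\cdot\boldsymbol{\Phi}^n)=0$ and the term that spoiled the temporal order disappears. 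Running the BDF2 G-stability identity in the $S^{-1}$-weighted inner product turns the time difference into a telescoping $\|\cdot\|_{S^{-1}}$ quantity, while the viscous and stabilization terms reproduce $\|\widetilde{\boldsymbol{\eta}}_{u,t}^n\|_0^2$ up to controllable remainders; inserting the Step 1 bounds on $\sqrt{\nu}\|\nabla\widetilde{\boldsymbol{\eta}}_{u,t}^n\|_0$ and on the gap $\|\widetilde{\boldsymbol{\eta}}_{u,t}^n-\boldsymbol{\eta}_{u,t}^n\|_0$ and applying a second discrete Gronwall argument delivers $\|\widetilde{\boldsymbol{\eta}}_{u,t}^m\|_0^2\le C(\Delta t)^4$.

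The hard part will be the duality step: one must check that BDF2 G-stability genuinely survives in the $S^{-1}$ inner product, so that the three-level difference still telescopes and produces a nonnegative dissipation, and that the residual couplings between the Stokes lifting and the viscous term gain the extra factor of $\Delta t$ rather than reintroducing a factor $1/\nu$. Keeping the grad-div parameter $\gamma$ explicit throughout, so that all pressure and regularity estimates for $S^{-1}$ remain $\nu$-robust instead of relying on viscosity, is the delicate bookkeeping that separates this from Guermond's $\nu=1$ analysis; the BDF1 initialization must also be matched at the same orders, as in Lemma~\ref{lem:initial}.
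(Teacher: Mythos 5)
Your two-step plan --- first an energy estimate giving control of $\|\delta_t\widetilde{\boldsymbol{\eta}}_{u,t}^n\|_0$, of the gap $\|\widetilde{\boldsymbol{\eta}}_{u,t}^n-\boldsymbol{\eta}_{u,t}^n\|_0$ and of $\sqrt{\nu}\|\nabla\widetilde{\boldsymbol{\eta}}_{u,t}^n\|_0+\sqrt{\gamma}\|\nabla\cdot\widetilde{\boldsymbol{\eta}}_{u,t}^n\|_0$, then eliminating the pressure lag by testing with the inverse (grad-div stabilized) Stokes operator applied to the error to recover the full $(\Delta t)^4$ rate in $L^2$ --- is exactly the strategy the paper sketches for this theorem, which it does not prove in detail here but defers to the technical report \cite{technicalreportAD2015}. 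Your proposal is consistent with that sketch and with the analogous computations carried out for the fully discrete case in Lemma~\ref{lem:errn2}, so it takes essentially the same approach as the paper.
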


\subsection{Spatial Discretization}
Now that we considered the error due temporal discretization, we finalize this approach by comparing the full discretization (\ref{eqn:full1})-(\ref{eqn:full2}) with the discretization in time:
\begin{center}
 \begin{myminipage}
 Find $\widetilde{\boldsymbol{w}}_t^n\in\boldsymbol{V}, \boldsymbol{w}_t^n\in\boldsymbol{V}^{div}$ and $r_t^n\in Q$ such that
 \begin{align}
 \begin{aligned}
  &\left(\frac{3 \widetilde{\boldsymbol{w}}_t^n-4\boldsymbol{w}_t^{n-1}+\boldsymbol{w}_t^{n-2}}{2 \Delta 
t}, \boldsymbol{v}\right) 
+\nu(\nabla \widetilde{\boldsymbol{w}}_t^n, \nabla \boldsymbol{v}) 
+ \gamma (\nabla\cdot \widetilde{\boldsymbol{u}}^n_{t}, \nabla\cdot \boldsymbol{v}) 
\\&\quad
= (\boldsymbol{f}^n, \boldsymbol{v}) -(\nabla r_t^{n-1}, \boldsymbol{v})-c(\boldsymbol{u}(t_n);\boldsymbol{u}(t_n), \boldsymbol{v})\\
\widetilde{\boldsymbol{w}}_t^n|&_{\partial\Omega}=0, 
\label{eqn:ts_semi-full1}
 \end{aligned}
 \end{align}
 \begin{align}
 \begin{aligned}
  \left(\frac{3 \boldsymbol{w}_t^n-3\widetilde{\boldsymbol{w}}_t^n}{2 \Delta t}+\nabla (r_t^n-r_t^{n-1}), \boldsymbol{y}\right)&=0, \\
(\nabla \cdot \boldsymbol{w}_t^n, q)&=0, \\
\boldsymbol{w}_t^n|_{\partial \Omega}&=0.
\label{eqn:ts_semi-full2}
 \end{aligned}
\end{align}
holds for all $\boldsymbol{v}\in \boldsymbol{V}$, $\boldsymbol{y}\in [L^2(\Omega)]^d$ and $q\in Q$.
\end{myminipage}
\end{center}

\subsubsection{Notation}
We use the abbreviations 
\begin{align}
 \label{eqn:error_semi2h_eta}
\widetilde{\boldsymbol{\eta}}_{u,h}^n:=\widetilde{\boldsymbol{w}}_t^n - j_u\widetilde{\boldsymbol{w}}_t^n, 
\quad \boldsymbol{\eta}_{u,h}^n:=\boldsymbol{w}_t^n - j_u \boldsymbol{w}_t^n, 
\quad \eta_{p,h}^n:=r_t^n - j_p r_t^n
\end{align}
for the approximation errors and 
\begin{align}
\label{eqn:error_semi2h_e}
\widetilde{\boldsymbol{e}}_{u,h}^n:=j_u\widetilde{\boldsymbol{w}}_t^n-\widetilde{\boldsymbol{u}}_h^n, 
\quad \boldsymbol{e}_{u,h}^n:=j_u\boldsymbol{w}_t^n - \boldsymbol{u}_h^n, 
\quad e_{p,h}^n:=j_p r_t^n - p_h^n
\end{align}
for the discretization errors.
Hence, the errors due to spatial discretization can be written as
\begin {align*}
&\widetilde{\boldsymbol{\xi}}_{u,h}^n:=\widetilde{\boldsymbol{w}}_t^n-\widetilde{\boldsymbol{u}}_h^n=\widetilde{
\boldsymbol{\eta}}_{u,h}^n+ \widetilde{\boldsymbol{e}}_{u,h}^n, 
\quad \boldsymbol{\xi}_{u,h}^n:=\boldsymbol{w}_t^n-\boldsymbol{u}_h^n=\boldsymbol{\eta}_{u,h}^n+ 
\boldsymbol{e}_{u,h}^n, \quad
 \\ &
\xi_{p,h}^n:=r_t^n-p_h^n=\eta_{p,h}^n+ e_{p,h}^n.
\end {align*}

For the interpolation we again use Stokes interpolants defined analogously to the first approach:
 $(\boldsymbol{w}_h,\widetilde{r}_h)$ is given as solution of the Stokes problem
\begin{center}
 \begin{myminipage}
  Find $\boldsymbol{w}_h^n\in\boldsymbol{V}\!_h$ and $\widetilde{r}_h^{n-1} \in Q_h$ such that
\begin{align*}
&\nu(\nabla \boldsymbol{w}_h^n, \nabla \boldsymbol{v}_h) + \gamma (\nabla\cdot 
\boldsymbol{w}_h^n, \nabla\cdot \boldsymbol{v}_h) 
-( \widetilde{r}_h^{n-1}, \nabla\cdot \boldsymbol{v}_h)
 \\&
= \nu(\nabla \boldsymbol{w}_t^n, \nabla \boldsymbol{v}_h) 
+ \gamma (\nabla\cdot \boldsymbol{w}_t^n, \nabla\cdot \boldsymbol{v}_h)
-( r_t^{n-1}, \nabla\cdot \boldsymbol{v}_h)\\
(\nabla\cdot\boldsymbol{w}_h^n, q_h)&=(\nabla\cdot \boldsymbol{w}_t^n, q_h)
\end{align*}
holds for all $\boldsymbol{v}_h\in \boldsymbol{V}\!_h$ and $q_h\in Q_h$.
\end{myminipage}
\end{center}
 and $(\widetilde{\boldsymbol{w}}_h,r_h)$ is given as solution of the Stokes problem
 \begin{center}
 \begin{myminipage}
  Find $\widetilde{\boldsymbol{w}}_h^n\in\boldsymbol{V}\!_h$ and $r_h^{n-1} \in Q_h$ such that
\begin{align*}
&\nu(\nabla\widetilde{\boldsymbol{w}}_h^n, \nabla \boldsymbol{v}_h) + \gamma 
(\nabla\cdot\widetilde{\boldsymbol{w}}_h^n, \nabla\cdot \boldsymbol{v}_h) 
-( r_h^{n-1}, \nabla\cdot \boldsymbol{v}_h)
 \\&
= \nu(\nabla\widetilde{\boldsymbol{w}}_t^n, \nabla \boldsymbol{v}_h) 
+ \gamma (\nabla\cdot\widetilde{\boldsymbol{w}}_t^n, \nabla\cdot \boldsymbol{v}_h)
-( r_t^{n-1}, \nabla\cdot \boldsymbol{v}_h)\\
(\nabla\cdot\boldsymbol{\boldsymbol{w}}_h^n, q_h)&=(\nabla\cdot\widetilde{
\boldsymbol{w}}_t^n, q_h)
\end{align*}
holds for all $\boldsymbol{v}_h\in \boldsymbol{V}\!_h$ and $q_h\in Q_h$.
\end{myminipage}
\end{center}

Analogously to the results above, the Stokes interpolations yield the interpolation errors
\begin{align}
\begin{aligned}
&\|\boldsymbol{\eta}_{u,t}^n\|_0^2+h^2(\nu\|\boldsymbol{\eta}_{u,t}^n\|_1^2+\gamma\| \nabla\cdot\boldsymbol{\eta}_{u,t}^n\|_0^2+\|\eta_{p,t}^n\|_0^2)
 \\&\quad
+\|\widetilde{\boldsymbol{\eta}}_{u,t}^n\|_0^2+h^2(\nu\|\widetilde{\boldsymbol{\eta}}_{u,t}^n\|_1^2+\gamma\| \nabla\cdot\widetilde{\boldsymbol{\eta}}_{u,t}^n\|_0^2+\|\widetilde{\eta}_p^n\|_0^2)\\
 &\leq C ((\nu+  \gamma)h^{2k_u+2}\|\boldsymbol{w}_t^n\|_{W^{k_u+1,2}}^2+\gamma^{-1}h^{2k_p+4}\|r_t^{n-1}\|_{W^{k_p+1,2}}^2)
 \\ &\quad
 + C ((\nu+  \gamma)h^{2k_u+2}\|\widetilde{\boldsymbol{w}}_t^n\|_{W^{k_u+1,2}}^2+\gamma^{-1}h^{2k_p+4}\|\widetilde{r}_t^{n-1}\|_{W^{k_p+1,2}}^2). 
 \end{aligned}
\end{align}
provided the the semi-discretized quantities are sufficiently smooth.

\subsubsection{Initial Errors}
For the first time step we use a BDF1 instead of the BDF2 scheme. In particular, the convection-diffusion and the projection step
in the fully discretized setting read:
\begin{center}
 \begin{myminipage}
   Find $\widetilde{\boldsymbol{u}}_h^1\in\boldsymbol{V}\!_h,\boldsymbol{u}_h^1\in\boldsymbol{Y}\!_h$ and $p_h^1 \in Q_h$ such that
\begin{align}
 \begin{aligned}
   &\left(\frac{\widetilde{\boldsymbol{u}}_h^1-\boldsymbol{u}_h^0}{\Delta 
t}, \boldsymbol{v}_h\right) 
+\nu(\nabla \widetilde{\boldsymbol{u}}_h^1, \nabla \boldsymbol{v}_h) 
+ c(\widetilde{\boldsymbol{u}}_h^1;\widetilde{\boldsymbol{u}}_h^1, \boldsymbol{v}_h)
\\&\quad
+ s_h(\widetilde{\boldsymbol{u}}_h^1, \widetilde{\boldsymbol{u}}_h^1, 
\widetilde{\boldsymbol{u}}_h^1, \boldsymbol{v}_h) + \gamma (\nabla\cdot \widetilde{\boldsymbol{u}}_h^1, \nabla\cdot \boldsymbol{v}_h) 
= 
(\boldsymbol{f}^1, \boldsymbol{v}_h) -(\nabla p_h^0, \boldsymbol{v}_h)
\quad \forall \boldsymbol{v}_h\in \boldsymbol{V}\!_h\\
&\widetilde{\boldsymbol{u}}_h^1|_{\partial\Omega}=0, 
 \end{aligned}
 \end{align}
 \begin{align}
 \begin{aligned}
  \left(\frac{\boldsymbol{u}_h^1-\widetilde{\boldsymbol{u}}_h^1}{\Delta t}+\nabla (p_h^1-p_h^0), \nabla 
\boldsymbol{y}_h\right)&=0 \quad \forall \boldsymbol{y_h}\in \boldsymbol{Y}\!_h\\
(\nabla \cdot \boldsymbol{u}_h^1, q_h)&=0 \quad \forall q_h\in Q_h,\\
\boldsymbol{u}_h^1|_{\partial \Omega}&=0
 \end{aligned}
\end{align}
with the initial values $\widetilde{\boldsymbol{u}}_h^0=j_u\widetilde{\boldsymbol{w}}_t^0, \boldsymbol{u}_h^0=j_u 
\boldsymbol{w}_t^0$ and $p_h^0=j_p r_t^0$.
\end{myminipage}
\end{center}
Therefore, the initial discretization errors vanish: 
$\|\widetilde{\boldsymbol{e}}_{u,h}^0\|=\|\boldsymbol{e}_{u,h}^0\|=\|\eta_{p,t}^0\|=0$.

The technique used in the next Lemma is the same as later for the case $n\geq2$. 
For the sake of completeness and the convenience of the reader, we nevertheless consider both cases.

\begin{lemma}
 The initial errors due to spatial discretization $\boldsymbol{e}_{u,h}^1=j_u\boldsymbol{w}_t^1-\boldsymbol{u}_h^1$ and $\widetilde{\boldsymbol{e}}_{u,h}^1=j_u\widetilde{\boldsymbol{w}}_t^1-\widetilde{\boldsymbol{u}}_h^1$ can be bounded as
  \begin{align}
\begin{aligned}
 &C\left(1-\frac{\Delta t}{\nu}-\Delta t\frac{(\nu+\gamma)^2h^{4k_u}+\gamma^{-2}h^{4k_p+4} + (\Delta t)^4 (\nu+\gamma+\gamma^{-1})^2}{\nu^5}\right)\|\widetilde{\boldsymbol{e}}_{u,h}^1\|_0^2
 \\&\quad
+\nu\Delta t\|\nabla \widetilde{\boldsymbol{e}}_{u,h}^1\|_0^2 
+ \gamma\Delta t \|\nabla \cdot\widetilde{\boldsymbol{e}}_{u,h}^1\|_0^2
+2\Delta t \sum_{M\in\mathcal M_h} \tau_M^1 
\|\kappa_M((\widetilde{\boldsymbol{u}}_h^1\cdot\nabla)\widetilde{\boldsymbol{e}}_{u,h}^1)\|_{0,M}^2
\\&\leq
C \left((\nu+\gamma)h^{2k_u+2}+\gamma^{-1}h^{2k_p+4} + (\Delta t)^2 h^2 \left(\frac{\nu+\gamma}{\nu}+\frac{1}{\nu\gamma}\right)\right)\left(1+\frac{\Delta t}{\nu}\right)
\\&\quad
+C\Delta t\frac{(\nu+\gamma)^2h^{4k_u}+\gamma^{-2}h^{4k_p+4} + (\Delta t)^4 (\nu+\gamma+\gamma^{-1})^2}{\nu^3}
\\&\quad
+C \frac{\Delta t}{\nu}
\max_{M\in\mathcal M_h}\{\tau_M^1 |\widetilde{\boldsymbol{u}}_M^1 |^2\}
\Big( (\nu+\gamma)h^{2k_u}+\frac{h^{2k_p+2}}{\gamma} + (\Delta t)^2 \Big(\nu+\gamma+\frac{1}{\gamma}\Big)+\nu h^{2s}\Big).
\label{eqn:ts_initial_space_errors}
\end{aligned}
\end{align}
\end{lemma}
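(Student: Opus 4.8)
The plan is to reproduce the argument of Lemma~\ref{lem:initial}, but now with the \emph{time-discrete} solution $(\widetilde{\boldsymbol{w}}_t^1,\boldsymbol{w}_t^1,r_t^1)$ of the BDF1 counterpart of \eqref{eqn:ts_semi-full1}--\eqref{eqn:ts_semi-full2} playing the role that the continuous solution $(\boldsymbol{u}(t_1),p(t_1))$ played there. First I would insert the Stokes interpolant $\widetilde{\boldsymbol{w}}_h^1$ into the fully discrete BDF1 convection--diffusion step and subtract the BDF1 time-discrete equation tested with the same discrete velocity. By the defining relations of the two Stokes problems, the viscous, grad--div and pressure-coupling parts of the approximation error $\widetilde{\boldsymbol{\eta}}_{u,h}^1$ cancel from the principal part, leaving an error equation for $\widetilde{\boldsymbol{e}}_{u,h}^1$ whose right-hand side is a discrete time-derivative defect, the convective difference $c(\widetilde{\boldsymbol{u}}_h^1;\widetilde{\boldsymbol{u}}_h^1,\cdot)-c(\boldsymbol{u}(t_1);\boldsymbol{u}(t_1),\cdot)$, the stabilization form $s_h$, and a pressure increment of the form $(\nabla(r_t^0-p_h^0),\cdot)$.

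Next I would test this error equation with $\widetilde{\boldsymbol{e}}_{u,h}^1$. Because the initial discretization errors vanish, $\widetilde{\boldsymbol{e}}_{u,h}^0=\boldsymbol{e}_{u,h}^0=0$, the BDF1 time term collapses---using that $\boldsymbol{u}_h^1$ is the $L^2(\Omega)$-projection of $\widetilde{\boldsymbol{u}}_h^1$ onto $\boldsymbol{V}\!_h^{\,div}$, exactly as in Remark~\ref{rem:l2proj}---into a coercive $\tfrac{1}{\Delta t}\|\widetilde{\boldsymbol{e}}_{u,h}^1\|_0^2$ together with a $\|\widetilde{\boldsymbol{e}}_{u,h}^1-\boldsymbol{e}_{u,h}^1\|_0^2$ contribution. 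The viscous, grad--div and $s_h$ terms supply the coercive quantities $\nu\Delta t\|\nabla\widetilde{\boldsymbol{e}}_{u,h}^1\|_0^2$, $\gamma\Delta t\|\nabla\cdot\widetilde{\boldsymbol{e}}_{u,h}^1\|_0^2$ and the SU term on the left. The convective difference is estimated by Lemma~\ref{lem:convectiveterms}; since the comparison is against the continuous $\boldsymbol{u}(t_1)$, the velocity gap $\boldsymbol{u}(t_1)-\widetilde{\boldsymbol{u}}_h^1$ first splits into the time-discretization error $\widetilde{\boldsymbol{\eta}}_{u,t}^1$ and the spatial errors $\widetilde{\boldsymbol{\eta}}_{u,h}^1+\widetilde{\boldsymbol{e}}_{u,h}^1$, so that the lemma yields the $K_1$-type bracket times $\|\widetilde{\boldsymbol{e}}_{u,h}^1\|_0^2$ plus remainders governed by the Stokes interpolation estimates and by the time-discretization bounds. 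The two $s_h$ cross terms are split as in the initialization into a half absorbed by the coercive stabilization and a remainder weighted by $\max_{M\in\mathcal M_h}\{\tau_M^1|\widetilde{\boldsymbol{u}}_M^1|^2\}$, which produces the final group on the right-hand side of \eqref{eqn:ts_initial_space_errors}.

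The delicate step is the pressure increment. To preserve the improved temporal order I would avoid the crude $\tfrac{1}{\Delta t}$-split of Lemma~\ref{lem:initial} and instead use the projection step \eqref{eqn:ts_semi-full2} and its fully discrete analogue to re-express the increment through $\nabla e_{p,h}$ and an inf-sup/inverse-Stokes control of the pressure error by the velocity error. Bounding $\|\nabla(r_t^0-p_h^0)\|_0$ by the interpolation estimate for $\eta_{p,h}$ and inserting the time-discretization bounds $\|\widetilde{\boldsymbol{\eta}}_{u,t}^1\|_0^2\le C(\Delta t)^4$ and $\Delta t\sum_n(\nu\|\nabla\widetilde{\boldsymbol{\eta}}_{u,t}^n\|_0^2+\gamma\|\nabla\cdot\widetilde{\boldsymbol{\eta}}_{u,t}^n\|_0^2)\le C(\Delta t)^2$ from the preceding theorem, one obtains the squared numerators $(\nu+\gamma)^2h^{4k_u}+\gamma^{-2}h^{4k_p+4}+(\Delta t)^4(\nu+\gamma+\gamma^{-1})^2$. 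Splitting each such product by Young's inequality against the coercive $\nu\Delta t\|\nabla\widetilde{\boldsymbol{e}}_{u,h}^1\|_0^2$, and against a further factor of $\nu^2$ wherever a second inverse-Stokes step is needed, is precisely what generates the weights $\nu^{-3}$ and $\nu^{-5}$ and forces the corresponding multiples of $\|\widetilde{\boldsymbol{e}}_{u,h}^1\|_0^2$ onto the left-hand side with the prefactor stated in \eqref{eqn:ts_initial_space_errors}.

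The main obstacle is therefore the pressure term together with the $\nu$-bookkeeping: one must arrange the increment so that no inverse power of $\Delta t$ survives, since that is what would otherwise spoil the temporal rate, and then follow the $\nu$-powers through the repeated Young and inverse-Stokes splittings so that the squared interpolation quantities reappear exactly with the weights $\nu^{-3}$ and $\nu^{-5}$. Once the pressure increment has been cast in this form, the remaining manipulations merely repeat the initialization argument, and collecting all contributions while absorbing the $\|\widetilde{\boldsymbol{e}}_{u,h}^1\|_0^2$ terms to the left gives \eqref{eqn:ts_initial_space_errors}.
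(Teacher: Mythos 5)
Your overall skeleton (subtract the fully discrete BDF1 step from the time-discrete one, insert the Stokes interpolant so that the viscous, grad--div and pressure-coupling approximation terms cancel, test with $\widetilde{\boldsymbol{e}}_{u,h}^1$, and absorb the stabilization cross terms) matches the paper. However, there is one genuine gap that would prevent your argument from producing the stated bound: you estimate the convective difference with Lemma~\ref{lem:convectiveterms}. That lemma is the quasi-robust $W^{1,\infty}$-based bound from the first approach; it yields a $K_1$-type bracket containing $h^{2z}|\boldsymbol{u}|_{W^{1,\infty}}^2$, $h^{2z-2}\|\widetilde{\boldsymbol{u}}_h\|_\infty^2$, etc., together with remainders $h^{-2z}\|\boldsymbol{\eta}_u\|_0^2$ and LPS norms of $\boldsymbol{\eta}_u$ and $\widetilde{\boldsymbol{e}}_u$. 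None of these structures appear in \eqref{eqn:ts_initial_space_errors}, and in particular this route cannot generate the $\nu^{-3}$ and $\nu^{-5}$ weights. The paper instead uses Lemma~\ref{lem:convective2}, which compares against $\boldsymbol{u}(t_1)$ using $H^2$-regularity and $\nu$-weighted Young inequalities; the term $C\|\widetilde{\boldsymbol{e}}_{u,h}^1\|_0^2\bigl(\nu^{-3}(\|\widetilde{\boldsymbol{\eta}}_{u,t}^1\|_1^4+\|\widetilde{\boldsymbol{\eta}}_{u,h}^1\|_1^4)+\nu^{-1}\|\boldsymbol{u}(t_1)\|_2^2\bigr)$ from that lemma, combined with the interpolation bound $\|\widetilde{\boldsymbol{\eta}}_{u,h}^1\|_1^4\le C\nu^{-2}\bigl((\nu+\gamma)h^{2k_u}+\gamma^{-1}h^{2k_p+2}+\dots\bigr)^2$ and the temporal bound on $\|\widetilde{\boldsymbol{\eta}}_{u,t}^1\|_1$, is precisely what produces the $\nu^{-5}$ in the left-hand prefactor and the $\nu^{-3}$ on the right. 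Your attribution of these powers to the pressure increment and to "inverse-Stokes" splittings is therefore misplaced.

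Relatedly, the "delicate" pressure step you describe does not arise in the paper's proof. Since $p_h^0=j_p r_t^0$ and the Stokes interpolant is constructed so that $\nu(\nabla\widetilde{\boldsymbol{\eta}}_{u,h}^1,\nabla\boldsymbol{v}_h)+\gamma(\nabla\cdot\widetilde{\boldsymbol{\eta}}_{u,h}^1,\nabla\cdot\boldsymbol{v}_h)-(\eta_{p,h}^0,\nabla\cdot\boldsymbol{v}_h)=0$ for all $\boldsymbol{v}_h\in\boldsymbol{V}\!_h$, the pressure increment vanishes identically together with the viscous and grad--div approximation terms; no $1/\Delta t$ split and no inf-sup/inverse-Stokes argument is needed in the velocity estimate (the projection equation is only invoked afterwards, to bound $(\Delta t)^2\|\nabla(\xi_{p,h}^1-\xi_{p,h}^0)\|_0^2$ by $\|\widetilde{\boldsymbol{e}}_{u,h}\|_0^2+\|\widetilde{\boldsymbol{\eta}}_{u,h}\|_0^2$). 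The only surviving data terms after cancellation are the convective remainder, the $s_h$ remainder, and $(\widetilde{\boldsymbol{\eta}}_{u,h}^1-\boldsymbol{\eta}_{u,h}^0,\widetilde{\boldsymbol{e}}_{u,h}^1)$; once you replace Lemma~\ref{lem:convectiveterms} by Lemma~\ref{lem:convective2}, the rest of your outline goes through.
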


\begin{proof}
Testing the difference of the convection-diffusion equations with $\widetilde{\boldsymbol{e}}_{u,h}^1$ gives
 \begin{align}
 \begin{aligned}
 &\|\widetilde{\boldsymbol{e}}_{u,h}^1\|_0^2
+\nu\Delta t\|\nabla \widetilde{\boldsymbol{e}}_{u,h}^1\|_0^2 + \gamma\Delta t \|\nabla 
\cdot\widetilde{\boldsymbol{e}}_{u,h}^1\|_0^2
+\Delta t \sum_{M\in \mathcal{M}_h}\left(\tau_M^1 \|\kappa_M((\widetilde{\boldsymbol{u}}_h^1\cdot\nabla)\widetilde{\boldsymbol{e}}_{u,h}^1)\|_0^2\right)\\
&= 
\Delta t(c(\widetilde{\boldsymbol{u}}_h^1, \widetilde{\boldsymbol{u}}_h^1, \widetilde{\boldsymbol{e}}_{u,h}^1)-c(\widetilde{
\boldsymbol{w}}_t^1, \widetilde{\boldsymbol{w}}_t^1, \widetilde{\boldsymbol{e}}_{u,h}^1))
- \Delta t(\nabla \eta_{p,h}^0, \widetilde{\boldsymbol{e}}_{u,h}^1)  
 \\&\quad 
+\Delta t( 
s_h(\widetilde{\boldsymbol{u}}_h^1, \widetilde{\boldsymbol{u}}_h^1, \widetilde{\boldsymbol{u}}_h^1, \widetilde{
\boldsymbol{e}}_{u,h}^1)
+s_h(\widetilde{\boldsymbol{u}}_h^1, \widetilde{\boldsymbol{e}}_{u,h}^1, \widetilde{\boldsymbol{u}}_h^1, \widetilde{
\boldsymbol{e}}_{u,h}^1))
\\&\quad
-(\widetilde{\boldsymbol{\eta}}_{u,h}^1-\boldsymbol{\eta}_{u,h}^0, \widetilde{\boldsymbol{e}}_{u,h}^1) 
-\nu \Delta t(\nabla \widetilde{\boldsymbol{\eta}}_{u,h}^1, \nabla \widetilde{\boldsymbol{e}}_{u,h}^1) - \gamma\Delta 
t (\nabla \cdot\widetilde{\boldsymbol{\eta}}_{u,h}^1, \nabla \cdot\widetilde{\boldsymbol{e}}_{u,h}^1)\\
&= 
\Delta t(c(\widetilde{\boldsymbol{u}}_h^1, \widetilde{\boldsymbol{u}}_h^1, \widetilde{\boldsymbol{e}}_{u,h}^1)-c(\widetilde{
\boldsymbol{w}}_t^1, \widetilde{\boldsymbol{w}}_t^1, \widetilde{\boldsymbol{e}}_{u,h}^1)) 
+\Delta t( 
s_h(\widetilde{\boldsymbol{u}}_h^1, \widetilde{\boldsymbol{u}}_h^1, \widetilde{\boldsymbol{u}}_h^1, \widetilde{
\boldsymbol{e}}_{u,h}^1)
\\&\quad+s_h(\widetilde{\boldsymbol{u}}_h^1, \widetilde{\boldsymbol{e}}_{u,h}^1, \widetilde{\boldsymbol{u}}_h^1, \widetilde{
\boldsymbol{e}}_{u,h}^1))
-(\widetilde{\boldsymbol{\eta}}_{u,h}^1-\boldsymbol{\eta}_{u,h}^0, \widetilde{\boldsymbol{e}}_{u,h}^1).
 \end{aligned}
\end{align}
In the last step we used the special choice of the interpolant.

From Lemma~\ref{lem:convective2}, the error with respect to the convective terms is given by
\begin{align*}
 &c(\boldsymbol{u}(t_1), \boldsymbol{u}(t_1), \widetilde{\boldsymbol{e}}_{u,h}^1)- 
c(\widetilde{\boldsymbol{u}}_h^1, \widetilde{\boldsymbol{u}}_h^1, \widetilde{\boldsymbol{e}}_{u,h}^1) \\
&\leq 
C\|\widetilde{\boldsymbol{e}}_{u,h}^1\|_0^2\left(\frac{\|\widetilde{\boldsymbol{\eta}}_{u,t}^1\|_1^4+\|\widetilde{\boldsymbol{\eta}}_{u,h}^1\|_1^4}{\nu^3}+\frac{\|\boldsymbol{u}(t_1)\|_2^2}{\nu}\right)+\frac{\nu}{4}\|\widetilde{\boldsymbol{e}}_{u,h}^1\|_1^2 
 \\&\quad
+\frac{C}{\nu}(\|\widetilde{\boldsymbol{\eta}}_{u,t}^1\|_1^4+\|\widetilde{\boldsymbol{\eta}}_{u,h}^1\|_1^4+\|\widetilde{\boldsymbol{\eta}}_{u,h}^1\|_0^2\|\boldsymbol{u}(t_1)\|_2^2).
\end{align*}

For the nonlinear stabilization we again use $\widetilde{\boldsymbol{w}}_h^1= \boldsymbol{u}(t_1) - 
(\widetilde{\boldsymbol{\eta}}_{u,t}^1+\widetilde{\boldsymbol{\eta}}_{u,h}^1)$ and obtain
\begin{align*}
&s_h(\widetilde{\boldsymbol{u}}_h^1, \widetilde{\boldsymbol{w}}_h^1, \widetilde{\boldsymbol{u}}_h^1, \widetilde{
\boldsymbol{e}}_{u,h}^1)
= 
s_h(\widetilde{\boldsymbol{u}}_h^1, \boldsymbol{u}(t_1), \widetilde{\boldsymbol{u}}_h^1, \widetilde{\boldsymbol{e}}_{
u, h}^1) 
- 
s_h(\widetilde{\boldsymbol{u}}_h^1, \widetilde{\boldsymbol{\eta}}_{u,t}^1+\widetilde{\boldsymbol{\eta}}_{u,h}^1, \widetilde{\boldsymbol{u}}_h^1, \widetilde{
\boldsymbol{e}}_{u,h}^1)\\
&\leq C \sum_{M\in\mathcal M_h} \tau_M^1 |\widetilde{\boldsymbol{u}}_M^1|^2 
\|\kappa_M(\nabla\boldsymbol{u}(t_1))\|_{0,M}^2
+ C \sum_{M\in\mathcal M_h} \tau_M^1 |\widetilde{\boldsymbol{u}}_M^1|^2 
\|\nabla(\widetilde{\boldsymbol{\eta}}_{u,t}^1+\widetilde{\boldsymbol{\eta}}_{u,h}^1)\|_{0,M}^2
\\&\quad
+\frac14 \sum_{M\in\mathcal M_h} 
\tau_M^1\|\kappa_M((\widetilde{\boldsymbol{u}}_M^1\cdot\nabla)\widetilde{\boldsymbol{e}}_{u,h}^1)\|_{0,M}^2\\
&\leq C \max_{M\in\mathcal M_h} \{ \tau_M^1 |\widetilde{\boldsymbol{u}}_M^1|^2\} 
\left(\|\nabla(\widetilde{\boldsymbol{\eta}}_{u,t}^1+\widetilde{\boldsymbol{\eta}}_{u,h}^1)\|_0^2 + \sum_{M\in\mathcal 
M_h}\|\kappa_M(\nabla\boldsymbol{u}(t_1))\|_{0,M}^2 \right)
\\&\quad
+\frac14 \sum_{M\in\mathcal M_h} 
\tau_M^1\|\kappa_M((\widetilde{\boldsymbol{u}}_M^1\cdot\nabla)\widetilde{\boldsymbol{e}}_{u,h}^1)\|_{0,M}^2.
\end{align*}

We summarize the estimates in
\begin{align}
\begin{aligned}
 &C\left(1-\frac{\Delta t}{\nu}\|\boldsymbol{u}(t_1)\|_2^2-\Delta t\frac{\|\widetilde{\boldsymbol{\eta}}_{u,t}^1\|_1^4+\|\widetilde{\boldsymbol{\eta}}_{u,h}^1\|_1^4}{\nu^3}\right)\|\widetilde{\boldsymbol{e}}_{u,h}^1\|_0^2
+\nu\Delta t\|\nabla \widetilde{\boldsymbol{e}}_{u,h}^1\|_0^2 
\\&\quad
+ \gamma\Delta t \|\nabla \cdot\widetilde{\boldsymbol{e}}_{u,h}^1\|_0^2
+2\Delta t \sum_{M\in\mathcal M_h} \tau_M^1 
\|\kappa_M((\widetilde{\boldsymbol{u}}_h^1\cdot\nabla)\widetilde{\boldsymbol{e}}_{u,h}^1)\|_{0,M}^2\\
&\leq  C \Delta t \max_{M\in\mathcal M_h}\{\tau_M^1 |\widetilde{\boldsymbol{u}}_M^1 |^2\}
\left(\|\widetilde{\boldsymbol{\eta}}_{u,t}^1\|_1^2+\|\nabla\widetilde{\boldsymbol{\eta}}_{u,h}^1\|_0^2 + \sum_{M\in\mathcal 
M_h}\|\kappa_M(\nabla\boldsymbol{u}(t_1))\|_{0,M}^2 \right)
\\&\quad
+C\|\widetilde{\boldsymbol{\eta}}_{u,h}^1-\boldsymbol{\eta}_{u,h}^0\|_0^2 
+C \frac{\Delta t}{\nu}(\|\widetilde{\boldsymbol{\eta}}_{u,t}^1\|_1^4+\|\widetilde{\boldsymbol{\eta}}_{u,h}^1\|_1^4+\|\widetilde{\boldsymbol{\eta}}_{u,h}^1\|_0^2\|\boldsymbol{u}(t_1)\|_2^2)
\end{aligned}
\end{align}
due to 
\begin{align*}
(\widetilde{\boldsymbol{\eta}}_{u,h}^1-\boldsymbol{\eta}_{u,h}^0, \widetilde{\boldsymbol{e}}_{u,h}^1)&\leq \frac14 \|\widetilde{\boldsymbol{e}}_{u,h}^1\|_0^2+C\|\widetilde{\boldsymbol{\eta}}_{u,h}^1-\boldsymbol{\eta}_{u,h}^0\|_0^2.
\end{align*}

Finally, we obtain:
\begin{align}
 \begin{aligned}
  &C\left(1-\frac{\Delta t}{\nu}-\Delta t\frac{(\nu+\gamma)^2h^{4k_u}+\gamma^{-2}h^{4k_p+4} + (\Delta t)^4}{\nu^5}\right)\|\widetilde{\boldsymbol{e}}_{u,h}^1\|_0^2
+\nu\Delta t\|\nabla \widetilde{\boldsymbol{e}}_{u,h}^1\|_0^2 
\\&\quad
+ \gamma\Delta t \|\nabla \cdot\widetilde{\boldsymbol{e}}_{u,h}^1\|_0^2
+2\Delta t \sum_{M\in\mathcal M_h} \tau_M^1 
\|\kappa_M((\widetilde{\boldsymbol{u}}_h^1\cdot\nabla)\widetilde{\boldsymbol{e}}_{u,h}^1)\|_{0,M}^2
\\ &\leq
C \left((\nu+\gamma)h^{2k_u+2}+\gamma^{-1}h^{2k_p+4}\right)\left(1+\frac{\Delta t}{\nu}\right)
 \\&\quad
+C\Delta t\frac{(\nu+\gamma)^2h^{4k_u}+\gamma^{-2}h^{4k_p+4} + (\Delta t)^4}{\nu^3}
\\&\quad
+C \frac{\Delta t}{\nu}
\max_{M\in\mathcal M_h}\{\tau_M^1 |\widetilde{\boldsymbol{u}}_M^1 |^2\}
\left( (\nu+\gamma)h^{2k_u}+\gamma^{-1}h^{2k_p+2} + (\Delta t)^2 +\nu h^{2s}\right).
 \end{aligned}
\end{align}

For an estimate on the gradient of the pressure we test the projection error equation with $\nabla (\xi_{p,h}^1-\xi_{p,h}^0)$ to obtain
 \begin{align}
 \begin{aligned}
 \frac{2\Delta t}{3}\|\nabla(\xi_{p,h}^1-\xi_{p,h}^0)\|_0
 &=\frac{2\Delta t}{3} \frac{\|\nabla(\xi_{p,h}^1-\xi_{p,h}^0)\|_0^2}{\|\nabla(\xi_{p,h}^1-\xi_{p,h}^0)\|_0}
 =\frac{(\boldsymbol{\xi}_{u,h}-\widetilde{\boldsymbol{\xi}}_{u,h}, \nabla(\xi_{p,h}^1-\xi_{p,h}^0))}{\|\nabla(\xi_{p,h}^1-\xi_{p,h}^0)\|_0}\\
&\leq \|\boldsymbol{\xi}_{u,h}-\widetilde{\boldsymbol{\xi}}_{u,h}\|_0\leq \|\boldsymbol{e}_{u,h}-\widetilde{\boldsymbol{e}}_{u,h}\|_0+\|\boldsymbol{\eta}_{u,h}-\widetilde{\boldsymbol{\eta}}_{u,h}\|_0\\
  \Rightarrow (\Delta t)^2\| \nabla (\xi_{p,h}^1-\xi_{p,h}^0)\|_0^2 
  &  \leq  
  \|\widetilde{\boldsymbol{e}}_{u,h}\|_0^2+\|\widetilde{\boldsymbol{\eta}}_{u,h}\|_0^2.
 \end{aligned}
\end{align}
\end{proof}

\subsubsection{Discretization Error after Initialization}

Now, we are in position to state the error bounds also for $n\geq 2$.

\begin{lemma}
For all $1\leq m\leq N$ the discretization error due to spatial discretization $\boldsymbol{e}_{u,h}^m=j_u\boldsymbol{w}_t^m-\boldsymbol{u}_h^m$ and $\widetilde{\boldsymbol{e}}_{u,h}^m=j_u\widetilde{\boldsymbol{w}}_t^m-\widetilde{\boldsymbol{u}}_h^m$ can be bounded as
\begin{align}
\begin{aligned}
& \| \widetilde{\boldsymbol{e}}_{u,h}^m\|_0^2+\|2 \boldsymbol{e}_{u,h}^{m}- 
\boldsymbol{e}_{u,h}^{m-1}\|_0^2+\frac{4}{3}(\Delta t)^2 \|\nabla \xi_{p,h}^m\|_0^2 \\
&\quad+\sum_{n=2}^m\left(\Delta t\nu \|\nabla \widetilde{\boldsymbol{e}}_{u,h}^n\|_0^2 +\Delta t\gamma \|\nabla \cdot  
\widetilde{\boldsymbol{e}}_{u,h}^n\|_0^2+\|\delta_{tt}\boldsymbol{e}_{u,h}^{n}\|_0^2
 \right.\\&\qquad\qquad\left.
+ 2\Delta t \sum_{M\in\mathcal 
M_h}\tau_M^n\|\kappa_M((\widetilde{\boldsymbol{u}}_M^n\cdot\nabla)\widetilde{\boldsymbol{e}}_{u,h}^n)\|_{0,M}^2\right)\\
&\leq 
\frac{C_{G,h}}{\Delta t} \left((\nu+\gamma)h^{2k_u+2}+\frac{h^{2k_p+4}}{\gamma}\right)\left(1+\frac{\Delta t}{\nu}\right)
\\&\quad
+\frac{C_{G,h}}{\nu\gamma}\left((\nu+\gamma)h^{2k_u}+\frac{h^{2k_p+2}}{\gamma}\right)
\\&\quad
+\frac{C_{G,h}}{\nu^3}\left((\nu+\gamma)^2h^{4k_u}+\gamma^{-2}h^{4k_p+4} + (\Delta t)^4 \right)
\\&\quad
+\frac{C_{G,h}}{\nu}
\max_{M\in\mathcal M_h}\{\tau_M^1 |\widetilde{\boldsymbol{u}}_M^1 |^2\}
\left( (\nu+\gamma)h^{2k_u}+\gamma^{-1}h^{2k_p+2} + (\Delta t)^2 +\nu h^{2s}\right).
\end{aligned}
\end{align}
The Gronwall term $C_{G,h}$ behaves like
$C_{G,h}\sim\exp\left(\frac{T}{1-K}\right)$
where 
\begin{align}
\begin{aligned}
 \label{eqn:ts_Gronwall_spatial}
 K&:=\frac14+C\frac{\Delta t}{\nu}+C\Delta t\frac{\|\widetilde{\boldsymbol{\eta}}_{u,t}^n\|_1^4+\|\widetilde{\boldsymbol{\eta}}_{u,h}^n\|_1^4}{\nu^3}
 \\&
 \leq \frac14+C\frac{\Delta t}{\nu}+C\Delta t \frac{(\nu+\gamma)^2h^{4k_u}+\gamma^{-2}h^{4k_p+4}+(\Delta t)^4}{\nu^5}
 \end{aligned}
\end{align}
and $K< 1$ is required.
\label{lem:ts_spatial}
\end{lemma}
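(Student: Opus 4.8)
The plan is to repeat the BDF2 energy argument of Lemma~\ref{lem:errn2}, now for the spatial discretization errors measured against the time-discrete fields, with Lemma~\ref{lem:convective2} replacing Lemma~\ref{lem:convectiveterms} so that every $\nu$-dependence is tracked explicitly. First I would subtract the fully discrete convection-diffusion step (\ref{eqn:full1}), into which the Stokes interpolant has been inserted, from the time-discrete equation (\ref{eqn:ts_semi-full1}), obtaining an error equation for $\widetilde{\boldsymbol{e}}_{u,h}^n$. Exactly as in the initial step, the special construction of the Stokes interpolant cancels the viscous and grad-div contributions of the interpolation error $\widetilde{\boldsymbol{\eta}}_{u,h}^n$, so that the right-hand side reduces to a discrete-time-derivative defect $(\tfrac{3\widetilde{\boldsymbol{\eta}}_{u,h}^n-4\boldsymbol{\eta}_{u,h}^{n-1}+\boldsymbol{\eta}_{u,h}^{n-2}}{2\Delta t},\widetilde{\boldsymbol{e}}_{u,h}^n)$, the convective difference $c(\widetilde{\boldsymbol{u}}_h^n,\widetilde{\boldsymbol{u}}_h^n,\widetilde{\boldsymbol{e}}_{u,h}^n)-c(\widetilde{\boldsymbol{w}}_t^n,\widetilde{\boldsymbol{w}}_t^n,\widetilde{\boldsymbol{e}}_{u,h}^n)$, the streamline-stabilization terms, and a pressure-coupling term built from $r_t^{n-1}$ and $p_h^{n-1}$. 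Testing with $\widetilde{\boldsymbol{e}}_{u,h}^n$, I would apply the BDF2 splitting (\ref{eqn:splitting}) to the discrete time derivative; the cross term vanishes because $\boldsymbol{e}_{u,h}^n$ is discretely divergence free and is the $L^2$-projection of $\widetilde{\boldsymbol{e}}_{u,h}^n$ (Remark~\ref{rem:l2proj}), leaving on the left the telescoping quantity $\|2\boldsymbol{e}_{u,h}^n-\boldsymbol{e}_{u,h}^{n-1}\|_0^2$ and the dissipation $\|\delta_{tt}\boldsymbol{e}_{u,h}^n\|_0^2$.

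Next I would treat the pressure-coupling term as term $V$ in Lemma~\ref{lem:errn2}: invoking the projection error equation and choosing the admissible discrete pressure test, it turns into the telescoping difference $\tfrac{\Delta t}{3}\big(\|\nabla\xi_{p,h}^n\|_0^2-\|\nabla\xi_{p,h}^{n-1}\|_0^2\big)$ up to a $\|\nabla\delta_t r_t^n\|_0^2$ contribution and the projection residual $\tfrac{1}{(\Delta t)^2}\|\widetilde{\boldsymbol{e}}_{u,h}^n-\boldsymbol{e}_{u,h}^n\|_0^2$; the latter I absorb via $\|\widetilde{\boldsymbol{e}}_{u,h}^n-\boldsymbol{e}_{u,h}^n\|_0\le C\|\widetilde{\boldsymbol{e}}_{u,h}^n\|_0$. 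Summation in $n$ then delivers $\tfrac{4}{3}(\Delta t)^2\|\nabla\xi_{p,h}^m\|_0^2$ on the left. The convective and streamline terms are bounded verbatim as in the initial step, i.e. by Lemma~\ref{lem:convective2} — which produces the $\nu^{-1}\|\boldsymbol{u}(t_n)\|_2^2$ and $\nu^{-3}(\|\widetilde{\boldsymbol{\eta}}_{u,t}^n\|_1^4+\|\widetilde{\boldsymbol{\eta}}_{u,h}^n\|_1^4)$ coefficients of $\|\widetilde{\boldsymbol{e}}_{u,h}^n\|_0^2$ — and by a Cauchy--Schwarz splitting of $s_h$ into a $\tfrac18$-absorbable part plus a $\max_{M}\{\tau_M^n|\widetilde{\boldsymbol{u}}_M^n|^2\}$-weighted term involving $\|\nabla(\widetilde{\boldsymbol{\eta}}_{u,t}^n+\widetilde{\boldsymbol{\eta}}_{u,h}^n)\|_0^2$ and $\sum_M\|\kappa_M(\nabla\boldsymbol{u}(t_n))\|_{0,M}^2$.

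Summing over $n=2,\dots,m$ produces a per-step inequality whose coefficient in front of $\|\widetilde{\boldsymbol{e}}_{u,h}^n\|_0^2$ is precisely the quantity $K$ of (\ref{eqn:ts_Gronwall_spatial}). I would then apply the discrete Gronwall Lemma~\ref{lem:Gronwall_discrete} to $K\|\widetilde{\boldsymbol{e}}_{u,h}^n\|_0^2+\tfrac{4(\Delta t)^3}{3}\|\nabla\xi_{p,h}^n\|_0^2$, which under $K<1$ gives $C_{G,h}\sim\exp(\tfrac{T}{1-K})$. Finally I insert the Stokes-interpolation estimates for $\widetilde{\boldsymbol{\eta}}_{u,h}^n$, $\eta_{p,h}^n$ and $\|\nabla\delta_t r_t^n\|_0$, the $O((\Delta t)^2)$ bounds on $\widetilde{\boldsymbol{\eta}}_{u,t}^n$ from the preceding time-discretization theorem, Assumption~\ref{assumption-A.4} for the fluctuation terms, and the initial contribution (\ref{eqn:ts_initial_space_errors}); collecting the powers of $h$, $\Delta t$, $\nu$ and $\gamma$ reproduces the claimed right-hand side.

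The main obstacle is the $\nu$-bookkeeping that keeps $K<1$: the convective estimate contributes $C\tfrac{\Delta t}{\nu}+C\Delta t\,\nu^{-3}(\|\widetilde{\boldsymbol{\eta}}_{u,t}^n\|_1^4+\|\widetilde{\boldsymbol{\eta}}_{u,h}^n\|_1^4)$, and the second summand must be re-expressed through the interpolation and time-discretization bounds as $C\Delta t\,\nu^{-5}\big((\nu+\gamma)^2h^{4k_u}+\gamma^{-2}h^{4k_p+4}+(\Delta t)^4\big)$, exactly the non-robust factor displayed in (\ref{eqn:ts_Gronwall_spatial}). Checking that this same quartic-in-interpolation factor is what propagates, through the Gronwall constant, into the $\nu^{-3}$ block of the right-hand side, and that the BDF2 telescoping constants combine to leave the clean left-hand coefficients $\tfrac43(\Delta t)^2\|\nabla\xi_{p,h}^m\|_0^2$ and $2\Delta t\,\tau_M^n\|\kappa_M((\widetilde{\boldsymbol{u}}_M^n\cdot\nabla)\widetilde{\boldsymbol{e}}_{u,h}^n)\|_{0,M}^2$, is where the bulk of the careful work lies.
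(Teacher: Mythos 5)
Your overall architecture is the paper's: subtract \eqref{eqn:full1} from \eqref{eqn:ts_semi-full1}, exploit the Stokes interpolant to cancel the viscous and grad-div interpolation contributions, test with (a multiple of) $\widetilde{\boldsymbol{e}}_{u,h}^n$, use the splitting \eqref{eqn:splitting} with the vanishing cross term, bound the convective difference by Lemma~\ref{lem:convective2} and the $s_h$-terms by the usual Cauchy--Schwarz/absorption argument, and close with the discrete Gronwall lemma; the identification of $K$ and the $\nu^{-5}$ re-expression via the interpolation and $O((\Delta t)^2)$ time-discretization bounds is also exactly what the paper does. However, your treatment of the pressure coupling contains two concrete problems. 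First, you propose to absorb the projection residual via $\|\widetilde{\boldsymbol{e}}_{u,h}^n-\boldsymbol{e}_{u,h}^n\|_0\le C\|\widetilde{\boldsymbol{e}}_{u,h}^n\|_0$. After the $4\Delta t$ test-function scaling this residual carries the coefficient $\tfrac{3}{4\Delta t}$, so a crude Young-type absorption leaves an $O(1)$ per-step coefficient in front of $\|\widetilde{\boldsymbol{e}}_{u,h}^n\|_0^2$, and Gronwall over $N=T/\Delta t$ steps then yields $\exp(CT/\Delta t)$ rather than $C_{G,h}$. The term must instead cancel \emph{exactly}: the $-\tfrac{3}{4\Delta t}\|\widetilde{\boldsymbol{e}}_{u,h}^n-\boldsymbol{e}_{u,h}^n\|_0^2$ produced by the projection error equation matches, after scaling, the $-3\|\widetilde{\boldsymbol{e}}_{u,h}^n-\boldsymbol{e}_{u,h}^n\|_0^2$ in \eqref{eqn:splitting}, and together with the Pythagoras identity $\|\widetilde{\boldsymbol{e}}_{u,h}^n\|_0^2=\|\boldsymbol{e}_{u,h}^n\|_0^2+\|\widetilde{\boldsymbol{e}}_{u,h}^n-\boldsymbol{e}_{u,h}^n\|_0^2$ (Remark~\ref{rem:l2proj}) this is what leaves the clean telescoping left-hand side. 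This cancellation is the crux of the projection-method energy argument and cannot be replaced by an absorption.

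Second, no $\|\nabla\delta_t r_t^n\|_0^2$ term arises here, and it is fortunate that it does not: the paper defines the Stokes interpolant of this section with the \emph{lagged} pressure $r_t^{n-1}$, so that both schemes carry the same incremental pressure structure and the term $V$ telescopes exactly in $\xi_{p,h}$ with no $(1+1/\Delta t)\|\nabla(r_h^{n}-r_h^{n-1})\|_0^2$ remainder. What does remain is $(\nabla\cdot\widetilde{\boldsymbol{e}}_{u,h}^n,\eta_{p,h}^{n-1})$, bounded by $\tfrac{\gamma}{4}\|\nabla\cdot\widetilde{\boldsymbol{e}}_{u,h}^n\|_0^2+\tfrac{C}{\gamma}\|\eta_{p,h}^{n-1}\|_0^2$; this is precisely the source of the $\tfrac{C_{G,h}}{\nu\gamma}\bigl((\nu+\gamma)h^{2k_u}+\gamma^{-1}h^{2k_p+2}\bigr)$ block in the asserted estimate, which your route would not produce. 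Conversely, controlling the $\|\nabla\delta_t r_t^n\|_0$ term you introduce would require a bound on the time increments of the semi-discrete pressure that is neither stated in Assumption~\ref{ass:regularity_ts} nor established in Section~\ref{sec:errors_timedisc}. Both issues are repairable by following the paper's interpolant construction and the exact cancellation, but as written the plan does not close.
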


\begin{proof}
Subtracting the convection-diffusion and projection equations for $\widetilde{\boldsymbol{w}}_t^n$ and 
$\widetilde{\boldsymbol{u}}_h^n$ from each other gives
for all $\boldsymbol{v}_h\in \boldsymbol{V}\!_h$ and $\phi_h\in Q_h$
\begin{align}
\begin{aligned}
  &\left(\frac{3 \widetilde{\boldsymbol{e}}_{u,h}^n-4\boldsymbol{e}_{u,h}^{n-1}+\boldsymbol{e}_{u,h}^{n-2}}{2 \Delta 
t}, \boldsymbol{v}_h\right) 
+\nu(\nabla \widetilde{\boldsymbol{e}}_{u,h}^n, \nabla \boldsymbol{v}_h) + \gamma (\nabla 
\cdot\widetilde{\boldsymbol{e}}_{u,h}^n, \nabla \cdot\boldsymbol{v}_h) 
\\&\quad
+ (\nabla \xi_{p,h}^{n-1}, \boldsymbol{v}_h) + 
c(\widetilde{\boldsymbol{w}}_t^n, \widetilde{\boldsymbol{w}}_t^n, \boldsymbol{v}_h)- 
c(\widetilde{\boldsymbol{u}}_h^n, \widetilde{\boldsymbol{u}}_h^n, \boldsymbol{v}_h) \
-s_h(\widetilde{\boldsymbol{u}}_h^n, \widetilde{\boldsymbol{u}}_h^n, \widetilde{\boldsymbol{u}}_h^n, \boldsymbol{v}
_h) \\
&= -\left(\frac{3 \widetilde{\boldsymbol{\eta}}_{u,h}^n-4\boldsymbol{\eta}_{u,h}^{n-1}+\boldsymbol{\eta}_{u,h}^{n-2}}{2 
\Delta t}, \boldsymbol{v}_h\right) 
-\nu(\nabla \widetilde{\boldsymbol{\eta}}_{u,h}^n, \nabla \boldsymbol{v}_h) - \gamma (\nabla 
\cdot\widetilde{\boldsymbol{\eta}}_{u,h}^n, \nabla \cdot\boldsymbol{v}_h)
\label{eqn:ts_h_nonlin1_error}
 \end{aligned}
\end{align}
and
 \begin{align}
 \begin{aligned}
  \left(\frac{3 \boldsymbol{\xi}_{u,h}^n-3\widetilde{\boldsymbol{\xi}}_{u,h}^n}{2 \Delta t}
  +\nabla (\xi_{p,h}^n-\xi_{p,h}^{n-1}), \nabla 
\phi_h \right)=0.
\label{eqn:ts_h_nonlin2_error}
 \end{aligned}
\end{align}
We first test (\ref{eqn:ts_h_nonlin1_error}) with $4\Delta t \boldsymbol{e}_{u,h}^n$ to get
\begin{align}
 \begin{aligned}
  &2\left(3 
\widetilde{\boldsymbol{e}}_{u,h}^n-4\boldsymbol{e}_{u,h}^{n-1}+\boldsymbol{e}_{u,h}^{n-2}, \widetilde{\boldsymbol{e}}_{u,
h}^n\right) 
\\&\quad
+4\Delta t\nu(\nabla \widetilde{\boldsymbol{e}}_{u,h}^n, \nabla \widetilde{\boldsymbol{e}}_{u,h}^n) 
+ 4\Delta t s_h(\widetilde{\boldsymbol{u}}_h^n, \widetilde{\boldsymbol{e}}_{u,h}^n, \widetilde{\boldsymbol{u}}_h^n, 
\widetilde{\boldsymbol{e}}_{u,h}^n)\\
&=-4\Delta t\left(c(\widetilde{\boldsymbol{w}}_t^n, \widetilde{\boldsymbol{w}}_t^n, \widetilde{\boldsymbol{e}}_{u,h}^n) 
- c(\widetilde{\boldsymbol{u}}_h^n, \widetilde{\boldsymbol{u}}_h^n, \widetilde{\boldsymbol{e}}_{u,h}^n) \right)  
 \\&\quad
+ 4\Delta t\left(s_h(\widetilde{\boldsymbol{u}}_h^n, \widetilde{\boldsymbol{u}}_h^n, \widetilde{\boldsymbol{u}}_h^n, 
\widetilde{\boldsymbol{e}}_{u,h}^n) 
+ s_h(\widetilde{\boldsymbol{u}}_h^n, \widetilde{\boldsymbol{e}}_{u,h}^n, \widetilde{\boldsymbol{u}}_h^n, 
\widetilde{\boldsymbol{e}}_{u,h}^n) \right)\\
&\quad -\left(\frac{3 \widetilde{\boldsymbol{\eta}}_{u,h}^n-4\boldsymbol{\eta}_{u,h}^{n-1}+\boldsymbol{\eta}_{u,h}^{n-2}}{2 
\Delta t}, \widetilde{\boldsymbol{e}}_h\right) 
-\nu(\nabla \widetilde{\boldsymbol{\eta}}_{u,h}^n, \nabla \widetilde{\boldsymbol{e}}_h) 
 \\&\quad
- \gamma (\nabla \cdot\widetilde{\boldsymbol{\eta}}_{u,h}^n, \nabla \cdot\widetilde{\boldsymbol{e}}_h)-4\Delta t (\nabla \xi_{p,h}^{n-1}, \widetilde{\boldsymbol{e}}_{u,h}^n)\\
&=-4\Delta t\left(c(\widetilde{\boldsymbol{w}}_t^n, \widetilde{\boldsymbol{w}}_t^n, \widetilde{\boldsymbol{e}}_{u,h}^n) 
- c(\widetilde{\boldsymbol{u}}_h^n, \widetilde{\boldsymbol{u}}_h^n, \widetilde{\boldsymbol{e}}_{u,h}^n) \right)  
 \\&\quad
+ 4\Delta t\left(s_h(\widetilde{\boldsymbol{u}}_h^n, \widetilde{\boldsymbol{u}}_h^n, \widetilde{\boldsymbol{u}}_h^n, 
\widetilde{\boldsymbol{e}}_{u,h}^n) 
+ s_h(\widetilde{\boldsymbol{u}}_h^n, \widetilde{\boldsymbol{e}}_{u,h}^n, \widetilde{\boldsymbol{u}}_h^n, 
\widetilde{\boldsymbol{e}}_{u,h}^n) \right)\\
&\quad -\left(\frac{3 \widetilde{\boldsymbol{\eta}}_{u,h}^n-4\boldsymbol{\eta}_{u,h}^{n-1}+\boldsymbol{\eta}_{u,h}^{n-2}}{2 
\Delta t}, \widetilde{\boldsymbol{e}}_h\right) -4\Delta t (\nabla e_{p,h}^{n-1}, \widetilde{\boldsymbol{e}}_{u,h}^n)
 \end{aligned}
 \end{align}
 where the last step follows from the choice for the interpolation operator.
 The last pressure term in the above equation can be bounded by
 \begin{align*}
  &(\nabla e_{p,h}^{n-1}, \widetilde{\boldsymbol{e}}_{u,h}^n)=(\nabla e_{p,h}^{n-1}, \widetilde{\boldsymbol{\xi}}_{u,h}^n)
  =\frac{2 \Delta t}{3}(\nabla(\xi_{p,h}^n-\xi_{p,h}^{n-1}), \nabla e_{p,h}^{n-1})
   \\  &
  =\frac{2 \Delta t}{3}(\nabla(\xi_{p,h}^n-\xi_{p,h}^{n-1}), \nabla (\xi_{p,h}^{n-1}-\eta_{p,h}^{n-1}))\\
  &= \frac{\Delta t}{3}(\|\nabla \xi_{p,h}^n\|_0^2-\|\nabla (\xi_{p,h}^n-\xi_{p,h}^{n-1})\|_0^2-\|\nabla \xi_{p,h}^{n-1}\|_0^2)
  -\frac{2 \Delta t}{3}
  (\nabla(\xi_{p,h}^n-\xi_{p,h}^{n-1}), \nabla \eta_{p,h}^{n-1})\\
  &= \frac{\Delta t}{3}(\|\nabla \xi_{p,h}^n\|_0^2-\|\nabla \xi_{p,h}^{n-1}\|_0^2)-\frac{3}{4 \Delta t}\|\widetilde{\boldsymbol{e}}_{u,h}^n-\boldsymbol{e}_{u,h}^n\|_0^2
  + (\nabla\cdot\widetilde{\boldsymbol{e}}_{u,h}^n, \eta_{p,h}^{n-1})\\
  &\geq \frac{\Delta t}{3}(\|\nabla \xi_{p,h}^n\|_0^2-\|\nabla \xi_{p,h}^{n-1}\|_0^2)-\frac{3}{4 \Delta t}\|\widetilde{\boldsymbol{e}}_{u,h}^n-\boldsymbol{e}_{u,h}^n\|_0^2
  -\frac{\gamma}{4} \|\nabla\cdot\widetilde{\boldsymbol{e}}_{u,h}^n\|_0^2-\frac{C}{\gamma}\|\eta_{p,h}^{n-1}\|_0^2.
 \end{align*}

A splitting (cf. (\ref{eqn:splitting})) of the time derivative term gives
\begin{align*}
&(2(3 \widetilde{\boldsymbol{e}}_{u,h}^n - 4 \boldsymbol{e}_{u,h}^{n-1}+ \boldsymbol{e}_{u,h}^{n-2}), 
\widetilde{\boldsymbol{e}}_{u,h}^n)=I_1+I_2+I_3\\
&= 3\| \widetilde{\boldsymbol{e}}_{u,h}^n\|_0^2+3\| \boldsymbol{e}_{u,h}^{n}- \widetilde{
\boldsymbol{e}}^n \|_0^2 - 2\| \boldsymbol{e}_{u,h}^{n}\|_0^2 +\|2 \boldsymbol{e}_{u,h}^{n}- 
\boldsymbol{e}_{u,h}^{n-1}\|_0^2
 \\&\quad
+\|\delta_{tt}\boldsymbol{e}_{u,h}^{n}\|_0^2-\| \boldsymbol{e}
^{n-1}\|_0^2-\|2 \boldsymbol{e}_{u,h}^{n-1}- 
\boldsymbol{e}_{u,h}^{n-2}\|_0^2.
\end{align*}
The second term $I_2$ vanishes due to the fact that $\boldsymbol{e}_{u,h}^{n}$ is weakly divergence-free. 

With the identity $(a-b, b)=\frac{1}{2}(\|a\|_0^2-\|a-b\|_0^2-\|b\|_0^2)$ and  $\|\boldsymbol{e}_{u,h}^{n}\|\stackrel{(\ref{rem:l2proj})}{\leq} \| 
\widetilde{\boldsymbol{e}}_{u,h}^n \| $ we have so far
\begin{align}
 \begin{aligned}
& \| \widetilde{\boldsymbol{e}}_{u,h}^n\|_0^2 +\|2 \boldsymbol{e}_{u,h}^{n}- 
\boldsymbol{e}_{u,h}^{n-1}\|_0^2+\frac{4}{3}(\Delta t)^2 \|\nabla \xi_{p,h}^n\|_0^2 
+4 \Delta t\nu \|\nabla \widetilde{\boldsymbol{e}}_{u,h}^n\|_0^2 
\\&\quad
 + 3\Delta t\gamma \|\nabla \cdot  
\widetilde{\boldsymbol{e}}_{u,h}^n\|_0^2+\|\delta_{tt}\boldsymbol{e}_{u,h}^{n}\|_0^2 
+ 4\Delta t \|\kappa_M((\widetilde{\boldsymbol{u}}_h^n\cdot\nabla)\widetilde{\boldsymbol{e}}_{u,h}^n)\|_0^2\\
&\leq \| \widetilde{\boldsymbol{e}}_{u,h}^{n-1}\|_0^2 +\|2 \boldsymbol{e}_{u,h}^{n-1} - 
\boldsymbol{e}_{u,h}^{n-2}\|_0^2 + \frac{4}{3}(\Delta t)^2\| \nabla \xi_{p,h}^{n-1}\|_0^2  
 \\&\quad
-4\Delta t\left(c(\widetilde{\boldsymbol{w}}_t^n, \widetilde{\boldsymbol{w}}_t^n, \widetilde{\boldsymbol{e}}_{u,h}^n) - 
c(\widetilde{\boldsymbol{u}}_h^n, \widetilde{\boldsymbol{u}}_h^n, \widetilde{\boldsymbol{e}}_{u,h}^n) \right)  
\\&\quad
+ 4\Delta 
t\left(s_h(\widetilde{\boldsymbol{u}}_h^n, \widetilde{\boldsymbol{w}}_t^n, \widetilde{\boldsymbol{u}}_h^n, \widetilde
{\boldsymbol{e}}_{u,h}^n) 
- s_h(\widetilde{\boldsymbol{u}}_h^n, \widetilde{\boldsymbol{\eta}}_{u,h}^n, \widetilde{\boldsymbol{u}}_h^n, 
\widetilde{\boldsymbol{e}}_{u,h}^n) \right)
\\&\quad
-\left(\frac{3 \widetilde{\boldsymbol{\eta}}_{u,h}^n-4\boldsymbol{\eta}_{u,h}^{n-1}+\boldsymbol{\eta}_{u,h}^{n-2}}{2 
\Delta t}, \widetilde{\boldsymbol{e}}_{u,h}\right) +
\frac{C}{\gamma}\Delta t\|\eta_{p,h}^{n-1}\|_0^2.
\end{aligned}
\end{align}

For the terms on the right-hand side containing approximations errors we use the estimate
\begin{align*}
(3\widetilde{\boldsymbol{\eta}}_{u,h}^n-4\boldsymbol{\eta}_{u,h}^{n-1}+\boldsymbol{\eta}_{u,h}^{n-2}, \widetilde{
\boldsymbol{e}}_{u,h}^n)
  &\leq C (
\|\widetilde{\boldsymbol{\eta}}_{u,h}^n\|_0^2+\|\boldsymbol{\eta}_{u,h}^{n-1}\|_0^2+\|\boldsymbol{\eta}_{u,h}^{n-2}\|_0^2)
+\frac14\|\widetilde{\boldsymbol{e}}_{u,h}^n\|_0^2.
\end{align*}

From Lemma~\ref{lem:convective2}, the error with respect to the convective terms is given by
\begin{align*}
 &c(\boldsymbol{u}(t_n), \boldsymbol{u}(t_n), \widetilde{\boldsymbol{e}}_{u,h}^n)- 
c(\widetilde{\boldsymbol{u}}_h^n, \widetilde{\boldsymbol{u}}_h^n, \widetilde{\boldsymbol{e}}_{u,h}^n) \\
&\leq 
C\|\widetilde{\boldsymbol{e}}_{u,h}^n\|_0^2\left(\frac{\|\widetilde{\boldsymbol{\eta}}_{u,t}^n\|_1^4+\|\widetilde{\boldsymbol{\eta}}_{u,h}^n\|_1^4}{\nu^3}+\frac{\|\boldsymbol{u}(t_n)\|_2^2}{\nu}\right)+\frac{\nu}{4}\|\widetilde{\boldsymbol{e}}_{u,h}^n\|_1^2 
 \\&\quad
+\frac{C}{\nu}(\|\widetilde{\boldsymbol{\eta}}_{u,t}^n\|_1^4+\|\widetilde{\boldsymbol{\eta}}_{u,h}^n\|_1^4+\|\widetilde{\boldsymbol{\eta}}_{u,h}^n\|_0^2\|\boldsymbol{u}(t_n)\|_2^2).
\end{align*}

For the nonlinear stabilization we again use $\widetilde{\boldsymbol{w}}_h^n= \boldsymbol{u}(t_n) - 
(\widetilde{\boldsymbol{\eta}}_{u,t}^n+\widetilde{\boldsymbol{\eta}}_{u,h}^n)$ and obtain
\begin{align*}
&s_h(\widetilde{\boldsymbol{u}}_h^n, \widetilde{\boldsymbol{w}}_h^n, \widetilde{\boldsymbol{u}}_h^n, \widetilde{
\boldsymbol{e}}_{u,h}^n)
= 
s_h(\widetilde{\boldsymbol{u}}_h^n, \boldsymbol{u}(t_n), \widetilde{\boldsymbol{u}}_h^n, \widetilde{\boldsymbol{e}}_{
u, h}^n) 
- 
s_h(\widetilde{\boldsymbol{u}}_h^n, \widetilde{\boldsymbol{\eta}}_{u,t}^n+\widetilde{\boldsymbol{\eta}}_{u,h}^n, \widetilde{\boldsymbol{u}}_h^n, \widetilde{
\boldsymbol{e}}_{u,h}^n)\\
&\leq \sum_{M\in\mathcal M_h} \tau_M^n 
\left(C|\widetilde{\boldsymbol{u}}_M^n|^2 
\|\kappa_M(\nabla\boldsymbol{u}(t_n))\|_{0,M}^2
+ C |\widetilde{\boldsymbol{u}}_M^n|^2 
\|\nabla(\widetilde{\boldsymbol{\eta}}_{u,t}^n+\widetilde{\boldsymbol{\eta}}_{u,h}^n)\|_{0,M}^2
\right. \\&\qquad\qquad\qquad \left.
+\frac14\|\kappa_M((\widetilde{\boldsymbol{u}}_M^n\cdot\nabla)\widetilde{
\boldsymbol{e}}_{u,h}^n)\|_{0,M}^2\right)\\
&\leq C \max_{M\in\mathcal M_h} \{ \tau_M^n |\widetilde{\boldsymbol{u}}_M^n|^2\} 
\left(\|\nabla(\widetilde{\boldsymbol{\eta}}_{u,t}^n+\widetilde{\boldsymbol{\eta}}_{u,h}^n)\|_0^2 + \sum_{M\in\mathcal 
M_h}\|\kappa_M(\nabla\boldsymbol{u}(t_n))\|_{0,M}^2 \right)
\\&\quad
+\frac14 \sum_{M\in\mathcal M_h} 
\tau_M^n\|\kappa_M((\widetilde{\boldsymbol{u}}_M^n\cdot\nabla)\widetilde{\boldsymbol{e}}_{u,h}^n)\|_{0,M}^2.
\end{align*}

Now, we collect all the estimates and sum the resulting inequality from $n=2$ to 
$m\leq N$: 
\begin{align*}
& \| \widetilde{\boldsymbol{e}}_{u,h}^m\|_0^2+\|2 \boldsymbol{e}_{u,h}^{m}- 
\boldsymbol{e}_{u,h}^{m-1}\|_0^2+\frac{4}{3}(\Delta t)^2 \|\nabla \xi_{p,h}^m\|_0^2 \\
&\quad+\sum_{n=2}^m\left(\Delta t \nu \|\nabla \widetilde{\boldsymbol{e}}_{u,h}^n\|_0^2 + \Delta t\gamma \|\nabla 
\cdot  \widetilde{\boldsymbol{e}}_{u,h}^n\|_0^2+\|\delta_{tt}\boldsymbol{e}_{u,h}^{n}\|_0^2
 \right.\\&\qquad\qquad\left.
+ 2\Delta t \sum_{M\in\mathcal 
M_h}\tau_M^n\|\kappa_M((\widetilde{\boldsymbol{u}}_M^n\cdot\nabla)\widetilde{\boldsymbol{e}}_{u,h}^n)\|_{0,M}^2\right)\\
&\leq \| \widetilde{\boldsymbol{e}}_{u,h}^1\|_0^2 +\|2 \boldsymbol{e}_{u,h}^1 - 
\boldsymbol{e}_{u,h}^0\|_0^2  + \frac{4}{3}(\Delta t)^2\|\nabla \xi_{p,h}^1\|_0^2 
\\&\quad 
+\sum_{n=2}^m\left\{ \left(\frac14+C\frac{\Delta t}{\nu}+C\Delta t\frac{\|\widetilde{\boldsymbol{\eta}}_{u,t}^n\|_1^4+\|\widetilde{\boldsymbol{\eta}}_{u,h}^n\|_1^4}{\nu^3} 
\right)\|\widetilde{\boldsymbol{e}}_{u,h}^n\|_0^2
 \right.\\&\qquad\qquad
+C \frac{\Delta t}{\nu}
(\|\widetilde{\boldsymbol{\eta}}_{u,t}^n\|_1^4+\|\widetilde{\boldsymbol{\eta}}_{u,h}^n\|_1^4)
\\&\qquad\qquad + C \Delta t  \max_{M\in\mathcal M_h} \{ \tau_M^n |\widetilde{\boldsymbol{u}}_M^n|^2\} \sum_{M\in\mathcal 
M_h}\|\kappa_M(\nabla\boldsymbol{u}(t_n))\|_{0,M}^2
 \\&\qquad\qquad
+ C \Delta t\max_{M\in\mathcal M_h}\{\tau_M^n |\widetilde{\boldsymbol{u}}_M^n|^2\} 
(\|\widetilde{\boldsymbol{\eta}}_{u,h}^n\|_1^2+\|\widetilde{\boldsymbol{\eta}}_{u,t}^n\|_1^2)\\
&\qquad\qquad+\left. C\left(1+\frac{\Delta t}{\nu}\right) 
\|\widetilde{\boldsymbol{\eta}}_{u,h}^n\|_0^2+C\|\boldsymbol{\eta}_{u,h}^{n-1}\|_0^2+C\|\boldsymbol{\eta}_{u,h}^{n-2}
\|_0^2+\frac{C}{\gamma}\Delta t\|\eta_{p,h}^{n-1}\|_0^2\right\}.
\end{align*}

Provided $C\Delta t \left(1/\nu+(\|\widetilde{\boldsymbol{\eta}}_{u,t}^n\|_1^4+\|\widetilde{\boldsymbol{\eta}}_{u,h}^n\|_1^4)/\nu^3 
\right)<1$ we can use the discrete Gronwall Lemma~\ref{lem:Gronwall_discrete} for 
\begin{align*}
\left(\frac14+C\frac{\Delta t}{\nu}+C\Delta t\frac{\|\widetilde{\boldsymbol{\eta}}_{u,t}^n\|_1^4+\|\widetilde{\boldsymbol{\eta}}_{u,h}^n\|_1^4}{\nu^3}\right)\|\widetilde{\boldsymbol{e}}_{u,h}^n\|_0^2
\end{align*}
and arrive at
\begin{align*}
& \| \widetilde{\boldsymbol{e}}_{u,h}^m\|_0^2+\|2 \boldsymbol{e}_{u,h}^{m}- 
\boldsymbol{e}_{u,h}^{m-1}\|_0^2+\frac{4}{3}(\Delta t)^2 \|\nabla \xi_{p,h}^m\|_0^2 
\\&\quad
+\sum_{n=2}^m\left(\Delta t\nu \|\nabla \widetilde{\boldsymbol{e}}_{u,h}^n\|_0^2 +\Delta t\gamma \|\nabla \cdot  
\widetilde{\boldsymbol{e}}_{u,h}^n\|_0^2+\|\delta_{tt}\boldsymbol{e}_{u,h}^{n}\|_0^2
 \right.\\&\qquad\qquad\left.
+ 2\Delta t \sum_{M\in\mathcal 
M_h}\tau_M^n\|\kappa_M((\widetilde{\boldsymbol{u}}_M^n\cdot\nabla)\widetilde{\boldsymbol{e}}_{u,h}^n)\|_{0,M}^2\right)\\
&\leq C_{G,h}\left(\| \widetilde{\boldsymbol{e}}_{u,h}^1\|_0^2 +\|2 \boldsymbol{e}_{u,h}^1 - 
\boldsymbol{e}_{u,h}^0\|_0^2  + \frac{4}{3}(\Delta t)^2\|\nabla \xi_{p,h}^1\|_0^2 \right)
\\&\quad 
+C_{G,h}\sum_{n=2}^m\left\{
 \frac{\Delta t}{\nu}
(\|\widetilde{\boldsymbol{\eta}}_{u,t}^n\|_1^4+\|\widetilde{\boldsymbol{\eta}}_{u,h}^n\|_1^4)
\right.\\&\qquad\qquad\qquad
+  \Delta t  \max_{M\in\mathcal M_h} \{ \tau_M^n |\widetilde{\boldsymbol{u}}_M^n|^2\} \sum_{M\in\mathcal 
M_h}\|\kappa_M(\nabla\boldsymbol{u}(t_n))\|_{0,M}^2
\\&\qquad\qquad\qquad
+  \Delta t\max_{M\in\mathcal M_h}\{\tau_M^n |\widetilde{\boldsymbol{u}}_M^n|^2\} 
(\|\widetilde{\boldsymbol{\eta}}_{u,h}^n\|_1^2+\|\widetilde{\boldsymbol{\eta}}_{u,t}^n\|_1^2)\\
&\qquad\qquad\qquad+\left. \left(1+\frac{\Delta t}{\nu}\right) 
\|\widetilde{\boldsymbol{\eta}}_{u,h}^n\|_0^2+\|\boldsymbol{\eta}_{u,h}^{n-1}\|_0^2+\|\boldsymbol{\eta}_{u,h}^{n-2}
\|_0^2+\frac{\Delta t}{\gamma}\|\eta_{p,h}^{n-1}\|_0^2\right\}.
\end{align*}
where $C_{G,h}$ is defined as in (\ref{eqn:ts_Gronwall_spatial}).

Due to the initial error estimates 
we finally obtain an estimate for the velocity terms:\\
For all $1\leq m\leq N$ the discretization error due to spatial discretization can be bounded as
\begin{align*}
& \| \widetilde{\boldsymbol{e}}_{u,h}^m\|_0^2+\|2 \boldsymbol{e}_{u,h}^{m}- 
\boldsymbol{e}_{u,h}^{m-1}\|_0^2+\frac{4}{3}(\Delta t)^2 \|\nabla \xi_{p,h}^m\|_0^2 \\
&\quad+\sum_{n=2}^m\left(\Delta t\nu \|\nabla \widetilde{\boldsymbol{e}}_{u,h}^n\|_0^2 +\Delta t\gamma \|\nabla \cdot  
\widetilde{\boldsymbol{e}}_{u,h}^n\|_0^2+\|\delta_{tt}\boldsymbol{e}_{u,h}^{n}\|_0^2
 \right.\\&\qquad\qquad\left.
+ 2\Delta t \sum_{M\in\mathcal 
M_h}\tau_M^n\|\kappa_M((\widetilde{\boldsymbol{u}}_M^n\cdot\nabla)\widetilde{\boldsymbol{e}}_{u,h}^n)\|_{0,M}^2\right)\\
&\leq 
\frac{C_{G,h}}{\Delta t} \left((\nu+\gamma)h^{2k_u+2}+\gamma^{-1}h^{2k_p+4}\right)\left(1+\frac{\Delta t}{\nu}\right)
\\&\quad
+\frac{C_{G,h}}{\nu\gamma}\left((\nu+\gamma)h^{2k_u}+\gamma^{-1}h^{2k_p+2}\right)
\\&\quad
+\frac{C_{G,h}}{\nu^3}\left((\nu+\gamma)^2h^{4k_u}+\gamma^{-2}h^{4k_p+4} + (\Delta t)^4 \right)
\\&\quad
+\frac{C_{G,h}}{\nu}
\max_{M\in\mathcal M_h}\{\tau_M^1 |\widetilde{\boldsymbol{u}}_M^1 |^2\}
\left( (\nu+\gamma)h^{2k_u}+\gamma^{-1}h^{2k_p+2} + (\Delta t)^2 +\nu h^{2s}\right).
\end{align*}
\end{proof}

\begin{remark}
 Although these estimate are not quasi-robust with respect to $\nu$, the usual scaling of the Gronwall constant $C_{G,h}\sim T/\nu^3$ is improved to essentially $C_{G,h}\sim T/\nu$.
\end{remark}

\subsection{Combined Error Estimates}
We considered both the error due to the temporal discretization of the continuous problem and the spatial discretization of the 
semi-discretized problem. All that is left is to combine the error results.

\begin{theorem}
\label{thm:ts_full_errors_old}
Provided the intermediate solutions are sufficiently smooth, 
for all $1\leq m\leq N$ the total error due to spatial discretization and discretization in time
\begin{align*}
\widetilde{\boldsymbol{\zeta}}_u^n&=\widetilde{\boldsymbol{\eta}}_{u,t}^n+\widetilde{\boldsymbol{\xi}}_{u,h}^n=\boldsymbol{u}(t_n)-\boldsymbol{u}_h^n, &
\boldsymbol{\zeta}_u^n&=\boldsymbol{\eta}_{u,t}^n+\boldsymbol{\xi}_{u,h}^n=\boldsymbol{u}(t_n)-\boldsymbol{u}_h^n, \\
\zeta_p^n&=\eta_{p,t}^n+\xi_{p,h}^n=p(t_n)-p_h^n
\end{align*}
can be bounded as
 \begin{align}
  \nonumber
 \|\boldsymbol{\zeta}_u^m\|_0^2  
  &\leq \frac{C_{G,h}}{\Delta t} \Big((\nu+\gamma)h^{2k_u+2}+\gamma^{-1}h^{2k_p+4} + (\Delta t)^2 h^2 \Big(\frac{\nu+\gamma}{\nu}+\frac{1}{\nu\gamma}\Big)\Big)\Big(1+\frac{\Delta t}{\nu}\Big)
\\&\quad \label{eqn:ts_full_error_old}
+\frac{C_{G,h}}{\nu^3}\left((\nu+\gamma)^2h^{4k_u}+\gamma^{-2}h^{4k_p+4} + (\Delta t)^4 \right)
\\&\quad \nonumber
+\frac{C_{G,h}}{\nu}
\max_{M\in\mathcal M_h}\{\tau_M^1 |\widetilde{\boldsymbol{u}}_M^1 |^2\}
\left( (\nu+\gamma)h^{2k_u}+\gamma^{-1}h^{2k_p+2} + (\Delta t)^2 +\nu h^{2s}\right)
\\&\quad+C(\Delta t)^4
\nonumber
 \end{align}
 \begin{align}
 \nonumber
  &\Delta t \sum_{n=1}^m\left(\nu \|\nabla \boldsymbol{\zeta}_u^n\|_0^2 + \gamma \|\nabla \cdot  
\boldsymbol{\zeta}_u^n\|_0^2
+ 
\sum_{M\in\mathcal 
M_h}\tau_M^n\|\kappa_M((\widetilde{\boldsymbol{u}}_h^n\cdot\nabla)\boldsymbol{\zeta}_u^n)\|_{0,M}^2
\right)\\\nonumber
  &\leq \frac{C_{G,h}}{\Delta t} \Big((\nu+\gamma)h^{2k_u+2}+\gamma^{-1}h^{2k_p+4} + (\Delta t)^2 h^2 \Big(\frac{\nu+\gamma}{\nu}+\frac{1}{\nu\gamma}\Big)\Big)\Big(1+\frac{\Delta t}{\nu}\Big)
\\&\quad
+\frac{C_{G,h}}{\nu^3}\left((\nu+\gamma)^2h^{4k_u}+\gamma^{-2}h^{4k_p+4} + (\Delta t)^4 \right)
\\&\quad\nonumber
+\frac{C_{G,h}}{\nu}
\max_{M\in\mathcal M_h}\{\tau_M^1 |\widetilde{\boldsymbol{u}}_M^1 |^2\}
\left( (\nu+\gamma)h^{2k_u}+\gamma^{-1}h^{2k_p+2} + (\Delta t)^2 +\nu h^{2s}\right)
\\&\quad+C(\Delta t)^2  
\nonumber
 \end{align}
with the same Gronwall term $C_{G,h}$ as in Lemma~\ref{lem:ts_spatial} provided $K < 1$.
\end{theorem}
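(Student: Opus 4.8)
The plan is to exploit the error splitting already in place, writing the total errors as $\boldsymbol{\zeta}_u^n=\boldsymbol{\eta}_{u,t}^n+\boldsymbol{\eta}_{u,h}^n+\boldsymbol{e}_{u,h}^n$ (and likewise with tildes, and $\zeta_p^n=\eta_{p,t}^n+\eta_{p,h}^n+e_{p,h}^n$), and then to estimate the three constituents separately by results that are already available. By the triangle inequality applied to the squared norms it suffices, for each norm appearing on the left of the assertion, to bound the pure temporal error $\boldsymbol{\eta}_{u,t}^n$, the Stokes interpolation error $\boldsymbol{\eta}_{u,h}^n$, and the spatial discretization error $\boldsymbol{e}_{u,h}^n$. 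No new inequality is needed; the work is to assemble the three sources into one right-hand side.

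First I would insert the bounds of the time-discretization theorem of Section~\ref{sec:errors_timedisc}, namely $\|\widetilde{\boldsymbol{\eta}}_{u,t}^m\|_0^2\le C(\Delta t)^4$ and $\Delta t\sum_{n=1}^m(\nu\|\nabla\widetilde{\boldsymbol{\eta}}_{u,t}^n\|_0^2+\gamma\|\nabla\cdot\widetilde{\boldsymbol{\eta}}_{u,t}^n\|_0^2)\le C(\Delta t)^2$, with $C$ independent of $\nu$. These furnish exactly the $C(\Delta t)^4$ and $C(\Delta t)^2$ tail terms of the two claims. Next I would insert the Stokes interpolation estimate to control $\|\boldsymbol{\eta}_{u,h}^n\|_0^2$, $\nu\|\nabla\boldsymbol{\eta}_{u,h}^n\|_0^2$, $\gamma\|\nabla\cdot\boldsymbol{\eta}_{u,h}^n\|_0^2$ and $\|\eta_{p,h}^n\|_0^2$ by $(\nu+\gamma)h^{2k_u+2}+\gamma^{-1}h^{2k_p+4}$ (and the $h^{2k_u}$ variants for the gradients), and finally the bound of Lemma~\ref{lem:ts_spatial} for the discretization errors, whose right-hand side reproduces all remaining $h$- and $\tau_M$-dependent terms together with the Gronwall factor $C_{G,h}$.

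The step that needs care is passing from the auxiliary velocity $\widetilde{\boldsymbol{u}}_h^n$, in which Lemma~\ref{lem:ts_spatial} is phrased, to the projected velocity $\boldsymbol{u}_h^n$ entering $\boldsymbol{\zeta}_u^n$. For the $L^2$ part this is immediate: the projection properties (Remark~\ref{rem:l2proj} and its continuous analogue, using that $\boldsymbol{u}(t_n)$ and $\boldsymbol{w}_t^n$ are divergence free) give $\|\boldsymbol{e}_{u,h}^m\|_0\le\|\widetilde{\boldsymbol{e}}_{u,h}^m\|_0$ and $\|\boldsymbol{\eta}_{u,t}^m\|_0\le\|\widetilde{\boldsymbol{\eta}}_{u,t}^m\|_0$, so the $L^2$ estimate follows by simply adding the three contributions. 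For the energy and LPS parts the corresponding quantities already occur, summed against $\Delta t$, on the left-hand side of Lemma~\ref{lem:ts_spatial} and of the time-discretization theorem, so the combined norm of $\boldsymbol{\zeta}_u^n$ is dominated term by term; the only genuine bookkeeping is to collect the $\nu$, $\gamma$, $h$, $\tau_M$ and $\Delta t$ powers and to reinstate the mixed term $(\Delta t)^2h^2\big(\tfrac{\nu+\gamma}{\nu}+\tfrac{1}{\nu\gamma}\big)$ carried by the interpolation of the temporally discrete (rather than continuous) fields, cf. the initial estimate (\ref{eqn:ts_initial_space_errors}).

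I expect the main obstacle to be organizational rather than analytic: one must check that the three error sources combine into a single bound whose $\nu$-, $\gamma$-, $h$-, $\tau_M$- and $\Delta t$-scalings match the stated right-hand side, in particular that the $\nu$-scaling of $C_{G,h}$ is inherited consistently from Lemma~\ref{lem:ts_spatial} and that the $(\Delta t)^4$ (respectively $(\Delta t)^2$) temporal contributions are not absorbed away. Care is also required because the divergence and LPS controls only become meaningful after the projection step, which is precisely why the reduction to discretely divergence-free directions via Remark~\ref{rem:l2proj} is performed before the parameters are collected.
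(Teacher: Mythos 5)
Your overall route coincides with the paper's: the proof is indeed just the triangle inequality applied to the decomposition $\boldsymbol{\zeta}_u^n=\widetilde{\boldsymbol{\eta}}_{u,t}^n+\widetilde{\boldsymbol{\eta}}_{u,h}^n+\widetilde{\boldsymbol{e}}_{u,h}^n$, feeding in the time-discretization theorem of Section~\ref{sec:errors_timedisc}, the Stokes interpolation estimates and Lemma~\ref{lem:ts_spatial}; the $(\Delta t)^4$ and $(\Delta t)^2$ tail terms and the $L^2$-projection reductions are exactly as you describe.

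One step, however, is stated incorrectly, and it happens to be the only place where the paper's proof does something beyond summation. You assert that for the LPS part ``the corresponding quantities already occur \dots on the left-hand side of Lemma~\ref{lem:ts_spatial} and of the time-discretization theorem.'' The second half is false: the temporal semidiscretization (\ref{eqn:ts_time1_aux})--(\ref{eqn:ts_time2_aux}) carries no LPS term, so the theorem of Section~\ref{sec:errors_timedisc} controls only $\nu\|\nabla\widetilde{\boldsymbol{\eta}}_{u,t}^n\|_0^2+\gamma\|\nabla\cdot\widetilde{\boldsymbol{\eta}}_{u,t}^n\|_0^2$ and gives no bound on $\sum_{M}\tau_M^n\|\kappa_M((\widetilde{\boldsymbol{u}}_M^n\cdot\nabla)\widetilde{\boldsymbol{\eta}}_{u,t}^n)\|_{0,M}^2$. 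The paper closes this by the $L^2$-stability of $\kappa_M$ together with the element-wise constant advection direction,
\[
 \tau_M^n\|\kappa_M((\widetilde{\boldsymbol{u}}_M^n\cdot\nabla)\widetilde{\boldsymbol{\eta}}_{u,t}^n)\|_{0,M}^2\le C\,\tau_M^n|\widetilde{\boldsymbol{u}}_M^n|^2\,\|\nabla\widetilde{\boldsymbol{\eta}}_{u,t}^n\|_{0,M}^2,
\]
and observes that the resulting quantity is already present on the right-hand side of Lemma~\ref{lem:ts_spatial}; equivalently, after multiplying by $\Delta t$, summing over $n$ and using $\Delta t\sum_n\nu\|\nabla\widetilde{\boldsymbol{\eta}}_{u,t}^n\|_0^2\le C(\Delta t)^2$, it is absorbed into the term $\frac{C_{G,h}}{\nu}\max_{M}\{\tau_M^1|\widetilde{\boldsymbol{u}}_M^1|^2\}(\Delta t)^2$ of the asserted bound. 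With that one line added your argument is complete; your closing remark attributing the difficulty to the projection step points in the wrong direction, since the issue is the absence of LPS control in the intermediate time-discrete problem, not the passage from $\widetilde{\boldsymbol{u}}_h^n$ to $\boldsymbol{u}_h^n$.
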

\begin{proof}
Summing up interpolation and discretization errors gives the estimate for all considered error norms apart from the nonlinear stabilization.
All that is left is an estimate due to time discretization for this error:
\begin{align*}
&\Delta t \sum_{n=1}^m\sum_{M\in\mathcal 
M_h}\tau_M^n\|\kappa_M((\widetilde{\boldsymbol{u}}_h^n\cdot\nabla)\boldsymbol{\zeta}_u^n)\|_{0,M}^2\\
&\leq
C \Delta t \sum_{n=1}^m\sum_{M\in\mathcal M_h}\!\!\!\! \left( 
\tau_M^n\|\kappa_M((\widetilde{\boldsymbol{u}}_h^n\cdot\nabla)\widetilde{\boldsymbol{\xi}}_{u,h}^n)\|_{0,M}^2
+ \tau_M^n\|\kappa_M((\widetilde{\boldsymbol{u}}_h^n\cdot\nabla)\widetilde{\boldsymbol{\eta}}_{u,t}^n)\|_{0,M}^2 \right)\\
&\leq
C \Delta t \sum_{n=1}^m\sum_{M\in\mathcal M_h}\!\!\!\! \left( 
\tau_M^n\|\kappa_M((\widetilde{\boldsymbol{u}}_h^n\cdot\nabla)\widetilde{\boldsymbol{\xi}}_{u,h}^n)\|_{0,M}^2
+  \tau_M^n|\widetilde{\boldsymbol{u}}_M^n|^2\|\nabla\widetilde{\boldsymbol{\eta}}_{u,t}^n\|_{0,M}^2\right) \\
&\leq 
C \Delta t \sum_{n=1}^m\sum_{M\in\mathcal M_h}\!\!\!\! \tau_M^n( 
\|\kappa_M((\widetilde{\boldsymbol{u}}_h^n\cdot\nabla)\widetilde{\boldsymbol{e}}_{u,h}^n)\|_{0,M}^2
+  |\widetilde{\boldsymbol{u}}_M^n|^2(\|\nabla\widetilde{\boldsymbol{\eta}}_{u,t}^n\|_{0,M}^2+\|\nabla\widetilde{\boldsymbol{\eta}}_{u,h}^n\|_{0,M}^2)).
\end{align*} 
The first term is part of the left side of the discretization error estimate and the second term part of the right-hand side of the discretization error estimate. This gives the claim also for the nonlinear stabilization.
\end{proof}

\section{Quasi-Optimal Error Estimates}\label{sec:optimal}

We can now combine the results from Section~\ref{sec:full1} and Section~\ref{sec:temporal-spatial} to obtain quasi-optimal error estimates for all considered errors norms. In contrast to the previous sections, we also derive estimates on the pressure error. 
\begin{theorem}
\label{thm:ts_full_errors}
For all $1\leq m\leq N$ the total pressure error due to spatial discretization and discretization in time
$\zeta_p^n=p(t_n)-p_h^n$ 
 can be bounded as
 
 \begin{align*}
\Delta t\sum_{n=1}^N \|\zeta_p^{n-1}\|_0^2
&\leq C\left(\frac{1}{(\Delta t)^2}+\|\boldsymbol{u}\|_{l^2(t_0,T;[H^2(\Omega)]^d)}^2\right)\|\widetilde{\boldsymbol{\zeta}}_u\|_{l^\infty(t_0,T;[L^2(\Omega)]^d)}^2
 \\&\quad
+C\left(\nu+\gamma+\max_{1\leq n\leq N}\max_{M\in\mathcal M_h}\{\tau_M^n|\widetilde{\boldsymbol{u}}_M^n|^2\}\right)\|\widetilde{\boldsymbol{\zeta}}_u\|_{l^2(t_0,T;LPS)}^2
\\&\quad
+C \max_{1\leq n\leq N}\max_{M\in\mathcal M_h}\{\tau_M^n |\widetilde{\boldsymbol{u}}_M^n|^2 \}^2 h^{2k_u}\|\boldsymbol{u}\|_{l^2(t_0,T;[W^{k_u+1,2}(\Omega)]^d)}^2
 \\&\quad 
+C\frac{\|\widetilde{\boldsymbol{\zeta}}_u\|_{l^2(t_0,T;LPS)}^4}{\nu^2\Delta t}
+ C (\Delta t)^2.
\end{align*} 
\end{theorem}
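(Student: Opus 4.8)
The plan is to control the discrete part of the pressure error through the discrete inf-sup condition (Assumption~\ref{assumption-A.1}) and to read off a momentum residual from the convection-diffusion step. For a fixed time level I would test the fully discrete equation (\ref{eqn:full1}) against an arbitrary $\boldsymbol{v}_h\in\boldsymbol{V}\!_h$, subtract the continuous momentum equation evaluated at $t_n$ (legitimate since $(\boldsymbol{u}(t_n),p(t_n))$ solves the Navier--Stokes system), and solve for the pressure pairing to obtain
\[
(p(t_n)-p_h^{n-1},\nabla\cdot\boldsymbol{v}_h)=R^n(\boldsymbol{v}_h),
\]
where $R^n(\boldsymbol{v}_h)$ collects the discrete-time-derivative defect $(\partial_t\boldsymbol{u}(t_n)-D_t\widetilde{\boldsymbol{u}}_h^n,\boldsymbol{v}_h)$, the viscous term $\nu(\nabla\widetilde{\boldsymbol{\zeta}}_u^n,\nabla\boldsymbol{v}_h)$, the grad-div term $-\gamma(\nabla\cdot\widetilde{\boldsymbol{\zeta}}_u^n,\nabla\cdot\boldsymbol{v}_h)$, the convective difference $c(\boldsymbol{u}(t_n),\boldsymbol{u}(t_n),\boldsymbol{v}_h)-c(\widetilde{\boldsymbol{u}}_h^n,\widetilde{\boldsymbol{u}}_h^n,\boldsymbol{v}_h)$, and $-s_h(\widetilde{\boldsymbol{u}}_h^n,\widetilde{\boldsymbol{u}}_h^n,\widetilde{\boldsymbol{u}}_h^n,\boldsymbol{v}_h)$. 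Dividing by $\|\nabla\boldsymbol{v}_h\|_0$, taking the supremum and invoking $\beta$ controls the discrete pressure level $\|p(t_n)-p_h^{n-1}\|_0$; the shift $\|p(t_n)-p(t_{n-1})\|_0\le C\Delta t\,\|\partial_tp\|_{L^\infty(t_0,T;L^2)}$ turns this into a bound on $\zeta_p^{n-1}$ and, after squaring and weighting by $\Delta t\sum_n$, produces the closing $C(\Delta t)^2$ term. The interpolation part of the pressure is handled separately by the Stokes-interpolation estimates and is of higher order.

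I would then estimate each contribution to $R^n$ to match the four structural terms. The viscous and grad-div terms contribute $\nu\|\nabla\widetilde{\boldsymbol{\zeta}}_u^n\|_0$ and $\gamma\|\nabla\cdot\widetilde{\boldsymbol{\zeta}}_u^n\|_0$ after dividing by $\|\nabla\boldsymbol{v}_h\|_0$; squaring and using $\nu\|\nabla\widetilde{\boldsymbol{\zeta}}_u\|_0^2+\gamma\|\nabla\cdot\widetilde{\boldsymbol{\zeta}}_u\|_0^2\le\|\widetilde{\boldsymbol{\zeta}}_u\|_{LPS}^2$ gives the $(\nu+\gamma)\|\widetilde{\boldsymbol{\zeta}}_u\|_{l^2(t_0,T;LPS)}^2$ part. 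For the stabilization I would apply Cauchy--Schwarz in the $s_h$-pairing, bound the test factor by $(\max_M\tau_M^n|\widetilde{\boldsymbol{u}}_M^n|^2)^{1/2}\|\nabla\boldsymbol{v}_h\|_0$, and split $\widetilde{\boldsymbol{u}}_h^n=\boldsymbol{u}(t_n)-\widetilde{\boldsymbol{\zeta}}_u^n$: the error part reproduces the $\max_M\tau_M^n|\widetilde{\boldsymbol{u}}_M^n|^2$-weighted LPS norm, while the smooth part, estimated with Assumption~\ref{assumption-A.4} at $s=k_u$, yields the quartic-in-parameter term $\max\{\tau_M^n|\widetilde{\boldsymbol{u}}_M^n|^2\}^2h^{2k_u}\|\boldsymbol{u}\|_{l^2(t_0,T;W^{k_u+1,2})}^2$. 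The time-derivative defect I would split as the consistency error $\partial_t\boldsymbol{u}(t_n)-D_t\boldsymbol{u}(t_n)=O((\Delta t)^2)$ (folded into $C(\Delta t)^2$) plus $D_t\boldsymbol{u}(t_n)-D_t\widetilde{\boldsymbol{u}}_h^n=\tfrac{1}{2\Delta t}(3\widetilde{\boldsymbol{\zeta}}_u^n-4\boldsymbol{\zeta}_u^{n-1}+\boldsymbol{\zeta}_u^{n-2})$, which with Poincar\'e and $\|\boldsymbol{\zeta}_u^k\|_0\lesssim\|\widetilde{\boldsymbol{\zeta}}_u^k\|_0$ up to best-approximation terms (Remark~\ref{rem:l2proj}) gives the $(\Delta t)^{-2}\|\widetilde{\boldsymbol{\zeta}}_u\|_{l^\infty(t_0,T;L^2)}^2$ contribution.

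The main obstacle is the convective difference. Using bilinearity of $c$ in its convecting and convected arguments with $\widetilde{\boldsymbol{u}}_h^n=\boldsymbol{u}(t_n)-\widetilde{\boldsymbol{\zeta}}_u^n$ gives
\[
c(\boldsymbol{u}(t_n),\boldsymbol{u}(t_n),\boldsymbol{v}_h)-c(\widetilde{\boldsymbol{u}}_h^n,\widetilde{\boldsymbol{u}}_h^n,\boldsymbol{v}_h)=c(\boldsymbol{u}(t_n),\widetilde{\boldsymbol{\zeta}}_u^n,\boldsymbol{v}_h)+c(\widetilde{\boldsymbol{\zeta}}_u^n,\boldsymbol{u}(t_n),\boldsymbol{v}_h)-c(\widetilde{\boldsymbol{\zeta}}_u^n,\widetilde{\boldsymbol{\zeta}}_u^n,\boldsymbol{v}_h).
\]
For the two terms linear in the error I would use the skew-symmetric form together with $\nabla\cdot\boldsymbol{u}(t_n)=0$ to integrate the single derivative off $\widetilde{\boldsymbol{\zeta}}_u^n$ onto $\boldsymbol{v}_h$, so that only $\|\widetilde{\boldsymbol{\zeta}}_u^n\|_0$ and not its gradient survives; bounding the coefficient by $\|\boldsymbol{u}(t_n)\|_{L^\infty}+\|\nabla\boldsymbol{u}(t_n)\|_{L^3}\lesssim\|\boldsymbol{u}(t_n)\|_{H^2}$ (Sobolev embedding, $d\le3$) and dividing by $\|\nabla\boldsymbol{v}_h\|_0$ produces the $\|\boldsymbol{u}\|_{l^2(t_0,T;H^2)}^2\|\widetilde{\boldsymbol{\zeta}}_u\|_{l^\infty(t_0,T;L^2)}^2$ term. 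The genuinely quadratic term $c(\widetilde{\boldsymbol{\zeta}}_u^n,\widetilde{\boldsymbol{\zeta}}_u^n,\boldsymbol{v}_h)$ I would estimate by $C\|\nabla\widetilde{\boldsymbol{\zeta}}_u^n\|_0^2\|\nabla\boldsymbol{v}_h\|_0$ via $H^1\hookrightarrow L^4$; after dividing by $\|\nabla\boldsymbol{v}_h\|_0$, squaring, and using $\|\nabla\widetilde{\boldsymbol{\zeta}}_u^n\|_0^2\le\nu^{-1}\|\widetilde{\boldsymbol{\zeta}}_u^n\|_{LPS}^2$ with the in-time splitting $\max_n\|\widetilde{\boldsymbol{\zeta}}_u^n\|_{LPS}^2\le(\Delta t)^{-1}\|\widetilde{\boldsymbol{\zeta}}_u\|_{l^2(t_0,T;LPS)}^2$, the weighted sum $\Delta t\sum_n$ collapses to exactly $\nu^{-2}(\Delta t)^{-1}\|\widetilde{\boldsymbol{\zeta}}_u\|_{l^2(t_0,T;LPS)}^4$. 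The two delicate points are keeping every derivative off the error in the linear terms (to land in the $l^\infty(L^2)$ velocity norm rather than the energy norm) and arranging the quartic term as an $l^\infty\times l^2$ product in time so that only a single inverse power of $\Delta t$ remains. Finally, the level $n=1$ is treated identically, starting from the BDF1 step (\ref{eqn:initial_diffusion}), and summing $\Delta t\sum_{n=1}^N$ over the squared estimates assembles the claim.
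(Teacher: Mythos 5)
Your proposal follows essentially the same route as the paper: both control the pressure through the discrete inf-sup condition applied to the momentum residual of the convection-diffusion step, split the convective difference as $c(\boldsymbol{u},\widetilde{\boldsymbol{\zeta}}_u,\cdot)+c(\widetilde{\boldsymbol{\zeta}}_u,\boldsymbol{u},\cdot)-c(\widetilde{\boldsymbol{\zeta}}_u,\widetilde{\boldsymbol{\zeta}}_u,\cdot)$ with the linear parts landing in $\|\widetilde{\boldsymbol{\zeta}}_u\|_0\|\boldsymbol{u}\|_2$ and the quadratic part yielding the $\nu^{-2}(\Delta t)^{-1}\|\widetilde{\boldsymbol{\zeta}}_u\|_{l^2(LPS)}^4$ term, treat the LPS stabilization by splitting $\widetilde{\boldsymbol{u}}_h^n=\boldsymbol{u}(t_n)-\widetilde{\boldsymbol{\zeta}}_u^n$, and recover the $C(\Delta t)^2$ term from the BDF2 consistency error and the pressure increment $p(t_n)-p(t_{n-1})$. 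The only cosmetic difference is that you take a supremum over test functions while the paper selects a specific inf-sup test function $\boldsymbol{w}_h$, and you re-derive the convective bounds that the paper delegates to Lemma~\ref{lemma:convectivebound1}.
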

\begin{proof}
In order to obtain the estimate for the pressure error in the $L^2(\Omega)$ norm, we utilize the discrete inf-sup 
stability of the ansatz spaces, i.e.
\begin{align}
\exists \boldsymbol{w}_h\in \boldsymbol{V}\!_h\colon~\|\nabla\boldsymbol{w}_h\|_0\leq \| \zeta_p^n \|_0/ \beta, \quad 
-(\nabla\cdot\boldsymbol{w}_h,\zeta_p^n)=\|\zeta_p^n\|_0^2.
\end{align}

We test the advection-diffusion error equation with $\boldsymbol{w}_h$:
\begin{align}
 \begin{aligned}
  &\left(\frac{3 \widetilde{\boldsymbol{\zeta}}_u^n-4\boldsymbol{\zeta}_u^{n-1}+\boldsymbol{\zeta}_u^{n-2}}{2\Delta 
t}, \boldsymbol{w}_h \right) +\nu(\nabla \widetilde{\boldsymbol{\zeta}}_u^n, \nabla \boldsymbol{w}_h) + \gamma (\nabla 
\cdot  \widetilde{\boldsymbol{\zeta}}_u^n, \nabla \cdot \boldsymbol{w}_h) 
\\&=-c(\boldsymbol{u}(t_n), \boldsymbol{u}(t_n), \boldsymbol{w}_h) + 
c(\widetilde{\boldsymbol{u}}_h^n, \widetilde{\boldsymbol{u}}_h^n, \boldsymbol{w}_h) 
+s_h(\widetilde{\boldsymbol{u}}_h^n, \widetilde{\boldsymbol{u}}_h^n, \widetilde{\boldsymbol{u}}_h^n, \boldsymbol{w
}_h)  
\\&\quad
+ (D_t\boldsymbol{u}(t_n)-\partial_t \boldsymbol{u}(t_n), \boldsymbol{w}_h)
- (\nabla (p(t_{n})-p_h^{n-1}), \boldsymbol{w}_h).
 \end{aligned}
 \end{align}
where $D_t\boldsymbol{u}(t_n):= (3 \boldsymbol{u}(t_n)-4\boldsymbol{u}(t_{n-1})+\boldsymbol{u}(t_{n-2})) /(2\Delta t)$ 
and $\partial_t \boldsymbol{u}$ is the time derivative of $\boldsymbol{u}$.\\
Noticing $\|\boldsymbol f\|_{-1}\leq\|\boldsymbol f\|_0$ we obtain
\begin{align*}
&\|\nabla \boldsymbol{w}_h\|_0 \|\zeta_p^{n-1}\|_0
\\&\leq\frac{1}{\beta}\|\zeta_p^{n-1}\|_0^2=-(\nabla 
\zeta_p^{n-1}, \boldsymbol{w}_h) \\
&\leq \left\|\frac{3 \widetilde{\boldsymbol{\zeta}}_u^n-4\boldsymbol{\zeta}_u^{n-1}+\boldsymbol{\zeta}_u^{n-2}}{2\Delta 
t}\right\|_{-1}\|\nabla \boldsymbol{w}_h\|_0
+\nu\|\nabla \widetilde{\boldsymbol{\zeta}}_u^n\|_0 \|\nabla \boldsymbol{w}_h\|_0
+ \gamma \|\nabla \cdot  \widetilde{\boldsymbol{\zeta}}_u^n\|_0 \|\nabla \cdot \boldsymbol{w}_h\|_0\\
&\quad+c(\boldsymbol{u}(t_n), \boldsymbol{u}(t_n), \boldsymbol{w}_h) - 
c(\widetilde{\boldsymbol{u}}_h^n, \widetilde{\boldsymbol{u}}_h^n, \boldsymbol{w}_h) 
+s_h(\widetilde{\boldsymbol{u}}_h^n, \widetilde{\boldsymbol{u}}_h^n, \widetilde{\boldsymbol{u}}_h^n, \boldsymbol{w
}_h) \\
&\quad+\| D_t\boldsymbol{u}(t_n)-\partial_t \boldsymbol{u}(t_n)\|_{-1}\|\nabla \boldsymbol{w}_h\|_0
+\|p(t_{n})-p(t_{n-1})\|_0 \|\nabla\cdot \boldsymbol{w}_h\|_0.
\end{align*}
Using Lemma~\ref{lemma:convectivebound1} we bound the convective terms according to
\begin{align*}
&|c(\boldsymbol{u}(t_n), \boldsymbol{u}(t_n), \boldsymbol{w}_h)- 
c(\widetilde{\boldsymbol{u}}_h^n, \widetilde{\boldsymbol{u}}_h^n, \boldsymbol{w}_h) |\\
&=|c(\widetilde{\boldsymbol{\zeta}}_u^n, \boldsymbol{u}(t_n), \boldsymbol{w}_h)- 
c(\widetilde{\boldsymbol{u}}_h^n, \widetilde{\boldsymbol{\zeta}}_u^n, \boldsymbol{w}_h)| \\
&= |c(\widetilde{\boldsymbol{\zeta}}_u^n, \boldsymbol{u}(t_n), \boldsymbol{w}_h) 
- c(\boldsymbol{u}(t_n), \widetilde{\boldsymbol{\zeta}}_u^n, \boldsymbol{w}_h)
-c(\widetilde{\boldsymbol{\zeta}}_u^n, \widetilde{\boldsymbol{\zeta}}_u^n, \boldsymbol{w}_h)|\\
& \leq C \|\widetilde{\boldsymbol{\zeta}}_u^n\|_0 \|\boldsymbol{u}(t_n)\|_2 \|\boldsymbol{w}_h \|_1
+ C \|\widetilde{\boldsymbol{\zeta}}_u^n\|_1^2 \|\boldsymbol{w}_h \|_1.
\end{align*}
For the nonlinear stabilization we get 
\begin{align*}
&s_h(\widetilde{\boldsymbol{u}}_h^n, \widetilde{\boldsymbol{u}}_h^n, \widetilde{\boldsymbol{u}}_h^n, \boldsymbol{w
}_h)
=s_h(\widetilde{\boldsymbol{u}}_h^n, 
\boldsymbol{u}(t_n)-\widetilde{\boldsymbol{\zeta}}_u^n, \widetilde{\boldsymbol{u}}_h^n, \boldsymbol{w}_h)\\
&\leq C \sum_{M\in\mathcal M_h} \tau_M^n |\widetilde{\boldsymbol{u}}_M^n|^2 \|\kappa_M(\nabla\boldsymbol{u}(t_n))\|_{0,M}\| 
\|\boldsymbol{w}_h\|_{1,M}
 \\&\quad
+ C \sum_{M\in\mathcal M_h} 
\tau_M^n\|\kappa_M((\widetilde{\boldsymbol{u}}_M^n\cdot\nabla)\widetilde{\boldsymbol{\zeta}}_u^n)\|_{0,M} 
|\widetilde{\boldsymbol{u}}_M^n| \|\boldsymbol{w}_h\|_{1,M}\\
&\leq C 
\left( \max_{M\in\mathcal M_h}\{\tau_M^n |\widetilde{\boldsymbol{u}}_M^n|^2 \|\kappa_M(\nabla\boldsymbol{u}(t_n))\|_{0,M} 
\}
 \right.\\&\left.\quad \quad \quad
+\sum_{M\in\mathcal M_h} \tau_M^n |\widetilde{\boldsymbol{u}}_M^n| 
\|\kappa_M((\widetilde{\boldsymbol{u}}_M^n\cdot\nabla)\widetilde{\boldsymbol{\zeta}}_u^n)\|_{0,M} \right)
\|\nabla \boldsymbol{w}_h\|_0.
\end{align*}
using the Cauchy-Schwarz inequality and Young's inequality.

We combine these results and obtain due to the approximation property of $\kappa_M$ and the estimates for 
$\|\widetilde{\boldsymbol{\zeta}}_u\|_{l^\infty(t_0,T;[L^2(\Omega)]^d)}$ and 
$\|\widetilde{\boldsymbol{\zeta}}_u\|_{l^2(t_0,T;LPS)}$:
\begin{align*}
&\Delta t\sum_{n=1}^N \|\zeta_p^{n-1}\|_0^2\\
&\leq C\Bigg\{ \frac{1}{(\Delta t)^2} \|\widetilde{\boldsymbol{\zeta}}_u\|_{l^\infty(t_0,T;[L^2(\Omega)]^d)}^2
+ \nu^2 \|\nabla \widetilde{\boldsymbol{\zeta}}_u\|_{l^2(t_0,T;[L^2(\Omega)]^d)}^2 \\
&\qquad\quad + \gamma^2 \|\nabla \cdot \widetilde{\boldsymbol{\zeta}}_u\|_{l^2(t_0,T;[L^2(\Omega)]^d)}^2
 + \|\widetilde{\boldsymbol{\zeta}}_u^n\|_{l^\infty(t_0,T;[L^2(\Omega)]^d)}^2 \|\boldsymbol{u}\|_{l^2(t_0,T;[H^2(\Omega)]^d)}^2 
\\
&\qquad\quad +  \| \widetilde{\boldsymbol{\zeta}}_u^n\|_{l^\infty(t_0,T;[H^1(\Omega)]^d)}^2 \| 
\widetilde{\boldsymbol{\zeta}}_u^n\|_{l^2(t_0,T;[H^1(\Omega)]^d)}^2 + (\Delta t)^2\\
&\qquad\quad + \max_{1\leq n\leq N}\max_{M\in\mathcal M_h}\{\tau_M^n |\widetilde{\boldsymbol{u}}_M^n|^2 \}^2 
h^{2k_u}\|\boldsymbol{u}\|_{l^2(t_0,T;[W^{k_u+1,2}(\Omega)]^d)}^2\\
&\qquad\quad +\max_{1\leq n\leq N}\max_{M\in\mathcal M_h}\{\tau_M^n\} \Delta t\sum_{n=1}^N \sum_{M\in\mathcal M_h} \tau_M^n 
|\widetilde{\boldsymbol{u}}_M^n|^2 
\|\kappa_M((\widetilde{\boldsymbol{u}}_M^n\cdot\nabla)\widetilde{\boldsymbol{\zeta}}_u^n)\|_{0,M}^2 \Bigg\}
\\&\leq C\left(\frac{1}{(\Delta t)^2}+\|\boldsymbol{u}\|_{l^2(t_0,T;[H^2(\Omega)]^d)}^2\right)\|\widetilde{\boldsymbol{\zeta}}_u\|_{l^\infty(t_0,T;[L^2(\Omega)]^d)}^2
 \\&\quad
+C\left(\nu+\gamma+\max_{1\leq n\leq N}\max_{M\in\mathcal M_h}\{\tau_M^n|\widetilde{\boldsymbol{u}}_M^n|^2\}\right)\|\widetilde{\boldsymbol{\zeta}}_u\|_{l^2(t_0,T;LPS)}^2
\\&\quad
+C \max_{1\leq n\leq N}\max_{M\in\mathcal M_h}\{\tau_M^n |\widetilde{\boldsymbol{u}}_M^n|^2 \}^2 h^{2k_u}\|\boldsymbol{u}\|_{l^2(t_0,T;[W^{k_u+1,2}(\Omega)]^d)}^2 + C (\Delta t)^2
 \\&\quad
+C\frac{\|\widetilde{\boldsymbol{\zeta}}_u\|_{l^2(t_0,T;LPS)}^4}{\nu^2\Delta t}
\end{align*}
\end{proof}

\begin{corollary}
 Theorem~\ref{thm:ansatz_guermond} states in the limiting case $\Delta t\to 0$
\begin{align*}  
 &\|\boldsymbol{\zeta}_u^m\|_0^2 \\&
\leq C_G\Big((\nu+\gamma)h^{2k_u+2} +\gamma^{-1}h^{2k_p+4}
\\&\qquad\quad
 +\Big((\nu+\gamma)h^{2k_u}+\frac{h^{2k_p+2}}{\gamma}+\nu h^{2s}\Big)\Big(\nu^{-1}\max_{1\leq n\leq m}\max_{M\in\mathcal M_h}\{\tau_M^n |\widetilde{\boldsymbol{u}}_M^n|^2\}+1\Big)\Big)\\
  &\Delta t \sum_{n=1}^m\left(\nu \|\nabla \boldsymbol{\zeta}_u^n\|_0^2 + \gamma \|\nabla \cdot  
\boldsymbol{\zeta}_u^n\|_0^2
+ 
\sum_{M\in\mathcal 
M_h}\tau_M^n\|\kappa_M((\widetilde{\boldsymbol{u}}_h^n\cdot\nabla)\boldsymbol{\zeta}_u^n)\|_{0,M}^2
\right)\\
&\leq C_G\Big((\nu+\gamma)h^{2k_u} +\gamma^{-1}h^{2k_p+2}
\\&\qquad\quad
 +\Big((\nu+\gamma)h^{2k_u}+\frac{h^{2k_p+2}}{\gamma}+\nu h^{2s}\Big)\Big(\nu^{-1}\max_{1\leq n\leq m}\max_{M\in\mathcal M_h}\{\tau_M^n |\widetilde{\boldsymbol{u}}_M^n|^2\}+1\Big)\Big)  
 \end{align*}
 and in the limiting case $h\to 0$ Theorem~\ref{thm:ts_full_errors_old} gives
 \begin{align}
 \nonumber
  &\|\boldsymbol{\zeta}_u^m\|_0^2 \leq C_{G,h}\max_{1\leq n\leq m}\max_{M\in\mathcal M_h}\{\tau_M^n |\widetilde{\boldsymbol{u}}_M^n|^2\} \frac{(\Delta t)^2}{\nu} +C\frac{(\Delta t)^4}{\nu^3}\\
  &\Delta t \sum_{n=1}^m\left(\nu \|\nabla \boldsymbol{\zeta}_u^n\|_0^2 + \gamma \|\nabla \cdot  
\boldsymbol{\zeta}_u^n\|_0^2
+ 
\sum_{M\in\mathcal 
M_h}\tau_M^n\|\kappa_M((\widetilde{\boldsymbol{u}}_h^n\cdot\nabla)\boldsymbol{\zeta}_u^n)\|_{0,M}^2
\right)\\
&\quad\leq C_{G,h}\max_{1\leq n\leq m}\max_{M\in\mathcal M_h}\{\tau_M^n 
|\widetilde{\boldsymbol{u}}_M^n|^2\} \frac{(\Delta t)^2}{\nu} +C(\Delta t)^2.
 \nonumber
 \end{align}

\label{cor:ts_rates}
\end{corollary}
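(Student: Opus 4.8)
The two pairs of displays are read off directly from the two cited theorems by specializing the parameter regime; no new estimate is required, and the work consists entirely in identifying which terms survive each limit.

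For the limit $\Delta t\to 0$ I would start not from the cleaned-up statement of Theorem~\ref{thm:ansatz_guermond} (where $\gamma\sim 1$ had already been fixed) but from the last display in its proof, which still carries all $\nu$- and $\gamma$-dependencies and exhibits the factor $\big(\nu^{-1}\max_{n,M}\{\tau_M^n|\widetilde{\boldsymbol{u}}_M^n|^2\}+1\big)$ in front of the stabilization bracket. Choosing $z=0$ and letting $\Delta t\to 0$ annihilates the $(\Delta t)^2$ and $(\Delta t)^5$ contributions, leaving precisely $(\nu+\gamma)h^{2k_u+2}$, $\gamma^{-1}h^{2k_p+4}$, and the stabilization bracket. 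To pass from the intermediate error $\widetilde{\boldsymbol{e}}_u$ to the total error $\boldsymbol{\zeta}_u=\boldsymbol{\eta}_u+\boldsymbol{e}_u$ I would combine the triangle inequality with the $L^2$-projection bound $\|\boldsymbol{e}_u^n\|_0\le\|\widetilde{\boldsymbol{e}}_u^n\|_0$ of Remark~\ref{rem:l2proj} and add the interpolation contributions: for the $L^2$ display the term $\|\boldsymbol{\eta}_u^m\|_0^2\lesssim(\nu+\gamma)h^{2k_u+2}+\gamma^{-1}h^{2k_p+4}$ from (\ref{eqn:stokes_iterp_2}), and for the energy--LPS display the term $\nu\|\boldsymbol{\eta}_u^n\|_1^2+\gamma\|\nabla\cdot\boldsymbol{\eta}_u^n\|_0^2\lesssim(\nu+\gamma)h^{2k_u}+\gamma^{-1}h^{2k_p+2}$ from (\ref{eqn:stokes_iterp}), summed over $n$ using $\Delta t\sum_n 1=T$. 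Because the interpolation energy error is of lower order in $h$ than the discretization energy error, this is exactly why the leading spatial power drops from $h^{2k_u+2}$ in the first display to $h^{2k_u}$ in the second.

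For the limit $h\to 0$ I would take the two estimates of Theorem~\ref{thm:ts_full_errors_old} and send every positive power of $h$ to zero. In the $L^2$ estimate the first line, of order $\tfrac{1}{\Delta t}\big((\nu+\gamma)h^{2k_u+2}+\gamma^{-1}h^{2k_p+4}+(\Delta t)^2h^2\big)$, tends to zero; the surviving pieces are $C_{G,h}\,\nu^{-1}\max_{n,M}\{\tau_M^n|\widetilde{\boldsymbol{u}}_M^n|^2\}(\Delta t)^2$ from the stabilization line together with $C_{G,h}\,\nu^{-3}(\Delta t)^4$ and the final $C(\Delta t)^4$, the latter two being collected into the single term $C\,(\Delta t)^4/\nu^3$. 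For the energy--LPS sum the same bookkeeping retains $C_{G,h}\,\nu^{-1}\max_{n,M}\{\tau_M^n|\widetilde{\boldsymbol{u}}_M^n|^2\}(\Delta t)^2$ and $C(\Delta t)^2$, into which the subdominant $O((\Delta t)^4)$ remainder is absorbed.

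The only step requiring care, and hence the main obstacle, is not an inequality but the consistent handling of the total error $\boldsymbol{\zeta}_u=\boldsymbol{u}-\boldsymbol{u}_h$ rather than the intermediate $\widetilde{\boldsymbol{u}}_h$: one must invoke the $L^2$-projection property so that $\|\boldsymbol{e}_u^n\|_0$ never exceeds $\|\widetilde{\boldsymbol{e}}_u^n\|_0$, and treat the nonlinear stabilization term $\kappa_M((\widetilde{\boldsymbol{u}}_h^n\cdot\nabla)\boldsymbol{\zeta}_u^n)$ by the splitting already used in the proof of Theorem~\ref{thm:ansatz_guermond}, namely into a discretization-error part sitting on the left-hand side and an interpolation part $\tau_M^n|\widetilde{\boldsymbol{u}}_M^n|^2\|\nabla\boldsymbol{\eta}_u^n\|_{0,M}^2$ feeding the stabilization bracket on the right. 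Once this splitting is in place, both limits are immediate.
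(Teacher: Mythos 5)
Your proposal is correct and follows essentially the same route as the paper, which states Corollary~\ref{cor:ts_rates} without a separate proof precisely because it is obtained by specializing Theorem~\ref{thm:ansatz_guermond} (resp.\ Theorem~\ref{thm:ts_full_errors_old}) and discarding the terms that vanish in the limit $\Delta t\to 0$ (resp.\ $h\to 0$); your bookkeeping of the surviving terms, the choice of $z$, and the passage from $\widetilde{\boldsymbol{e}}_u$ to $\boldsymbol{\zeta}_u$ via the interpolation errors and the splitting of the nonlinear stabilization all match the argument already carried out in the proofs of those theorems.
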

\begin{remark}
Equilibrating the spatial rates of convergence yields $k_u=k_p+1$ as it is for most inf-sup stable finite element pairs.
 Furthermore, the above equations show that the error estimates are quasi-optimal with respect to the LPS error if the LPS parameters are bounded, $\gamma\sim 1$ and $h^{k_u}+\Delta t \lesssim \nu$.
 However, the errors on the energy norm are optimal only for the stricter bound 
 \begin{align}
  \max_{1\leq n\leq m}\max_{M\in\mathcal M_h}\{\tau_M^n |\widetilde{\boldsymbol{u}}_M^n|^2\}\lesssim\min\{h^2, (\Delta t)^2\}
 \end{align}
 on the LPS parameter if the mesh width and the time step size fulfill
 \begin{align*} 
  \Delta t \lesssim \min\{\nu, h^2\}\quad\text{and}\quad h^{k_u} \lesssim \nu.
    \end{align*}
    \end{remark}
    \begin{remark}
Comparing the physical dimensions in the momentum equation, we obtain
\begin{align*}
\left[\tau_M^n(\widetilde{\boldsymbol{u}}_M^n)\right]\frac{m^2}{s^4}&= \left[s_u(\widetilde{\boldsymbol{u}}_h;\widetilde{\boldsymbol{u}}_h,\widetilde{\boldsymbol{u}}_h)\right]=\left[\left(\frac{\partial \widetilde{\boldsymbol{u}}_h}{\partial t}, \widetilde{\boldsymbol{u}}_h\right)\right]=\frac{m^2}{s^3}\\
\left[\gamma(\widetilde{\boldsymbol{u}}_M^n)\right]\frac{1}{s^2}&=\left[\gamma\|\nabla\cdot\widetilde{\boldsymbol{u}}_h^n\|_0^2\right]=\left[\left(\frac{\partial \widetilde{\boldsymbol{u}}_h}{\partial t}, \widetilde{\boldsymbol{u}}_h\right)\right]=\frac{m^2}{s^3}.
\end{align*}
This suggests a parameter design as 
\begin{align}
\label{eqn:physical_parameter}
\tau_M^n(\widetilde{\boldsymbol{u}}_M^n)&\sim \min\left\{
\frac{h_M}{|\widetilde{\boldsymbol{u}}_M^n|}, \frac{h_M^2}{\nu}\right\},& \gamma &\sim \max\left\{h_M|\widetilde{\boldsymbol{u}}|, \frac{|p^n|_{k, M}}{|\widetilde{\boldsymbol{u}}^n|_{k+1, M}}-\nu\right\}.
\end{align}
For the LPS SU parameter this is within the parameter bounds giving quasi-optimal results for the LPS error if we neglect quasi-robustness. 
The choice $\tau_M^n(\widetilde{\boldsymbol{u}}_M^n)=h_M/(2|\widetilde{\boldsymbol{u}}_M^n|)$ is in accordance with the setup
of the LPS parameter in \cite{colomes2016mixed} for the convection dominated case and we will stick to this in the numerical examples.
Note however, that we never require a lower bound for the LPS parameter in the error estimates. In particular, quasi-optimality and quasi-robustness also hold for $\tau_M^n = 0$.\\

A practical and analytically satisfying answer to the scaling of the grad-div stabilization parameter is still open. 
Choosing $\gamma\sim h_M|\widetilde{\boldsymbol{u}}|$ is not consistent with the our error estimates.
According to \cite{Jenkins2013b}, a good choice depends on (unknown) norms of the solution and on the question if the space of weakly divergence-free subspaces has an optimal
approximation property. 
Note that for $\gamma\to\infty$ the usage of Taylor-Hood elements on barycentrically refined simplicial meshes is equivalent to using discretely divergence-free Scott-Vogelius elements \cite{Case-2011}.
However, neither of these results seems to give a practical design .
E.g. in case $\boldsymbol{u}\equiv0$, the right-hand side in the error estimates basically reduces to $\min\{\gamma^{-1},\nu^{-1}\}h^{2k_p+4-2z}\|p\|_{L^\infty(t_0,T;W^{k_p+1,2})}^2$
and hence $\gamma\geq 1$ is required. Numerical examples (cf. \cite{arndt2014local, Dallmann2015, Olshanskii2009}) show that also in this case and for a manufactured solution a bounded grad-div parameter $\gamma=\mathcal{O}(1)$ gives the best results with respect to the $H^1$ norm for the velocity. 
This motivates to choose a constant grad-div stabilization parameter $\gamma=\mathcal{O}(1)$ also in this paper.

\end{remark}

\section{Numerical Results}\label{sec:numerics}
We comprehend our considerations with some numerical results. On the one hand, we want to investigate in which respect the proven estimates might be sharp. On the other hand, we want to study the influence of the stabilization. Therefore, we consider two examples. In the first one, the method of manufactured solutions is used to compute rates of convergence numerically. For a more realistic case, the Taylor-Green vortex is considered in the second example.
\subsection{Academic example}

The considered example is one for which we compute the forcing term $\boldsymbol{f}$ such that 
 \begin{align*}
  \boldsymbol{u}(x, y, t) &:= (\sin(1 - x) \sin(y + t), \cos(1 - x)\cos(y + t))^T\\
  p(x, y, t) &:= -\cos(1 - x) \sin(y + t)
  \end{align*}
  is the solution to the time-dependent Navier-Stokes problem in the domain $\Omega=[-1, 1]^2$ and for $t\in [0, 1]$. 
  
  The standard incremental pressure-correction scheme considered in this work is compared with the rotational pressure-correction scheme proposed by
  Timmermans in \cite{Timmermans1996} and analyzed for the Stokes problem by Guermond and Shen in \cite{guermond2004error}. For our setting it reads (with $\chi=1$):
  \begin{center}
   \begin{myminipage}
Find $\widetilde{\boldsymbol{u}}_h^n\in\boldsymbol{V}\!_h,\boldsymbol{u}_h^n\in\boldsymbol{Y}\!_h$ and $p_h^n \in Q_h$ such that
\begin{align}
\begin{aligned}
&\left(\frac{3 \widetilde{\boldsymbol{u}}_h^n-4\boldsymbol{u}^{n-1}_h+\boldsymbol{u}^{n-2}_h}{2 \Delta 
t}, \boldsymbol{v}_h\right) 
+\nu(\nabla \widetilde{\boldsymbol{u}}_h^n, \nabla \boldsymbol{v}_h) + c(\widetilde{\boldsymbol{u}}_h^n, \widetilde{\boldsymbol{u}}_h^n, \boldsymbol{v}_h)\\
&\quad + \gamma (\nabla\cdot \widetilde{\boldsymbol{u}}_h^n, \nabla\cdot \boldsymbol{v}_h) 
+ s_h(\widetilde{\boldsymbol{u}}_h^n, \widetilde{\boldsymbol{u}}_h^n, 
\widetilde{\boldsymbol{u}}_h^n, \boldsymbol{v}_h)
= (\boldsymbol{f}^n, \boldsymbol{v}_h) +( p^{n-1}_h,\nabla\cdot 
\boldsymbol{v}_h)\\
&\widetilde{\boldsymbol{u}}_h^n|_{\partial\Omega}=0
\label{eqn:rot1}
\end{aligned}
\end{align}
\begin{align}
\begin{aligned}
\left(\frac{3 \boldsymbol{u}_h^n-3\widetilde{\boldsymbol{u}}_h^n}{2 \Delta t}
+\nabla (p_h^{n}-p_h^{n-1}+\chi\nu\pi_{Q_h}(\nabla\cdot\widetilde{\boldsymbol{u}}_h^n)), \boldsymbol{y}_h \right)&=0\\
(\nabla\cdot \boldsymbol{u}_h^n, q_h)&=0\\
\boldsymbol{u}_h^n|_{\partial\Omega} &=0
\label{eqn:rot2}
\end{aligned}
\end{align}
holds for all $\boldsymbol{v}_h\in \boldsymbol{V}\!_h,~\boldsymbol{y}_h\in \boldsymbol{Y}\!_h $ and $q_h\in Q_h$.
\end{myminipage}
\end{center}
In this algorithm $\pi_{Q_h}$, denotes the $L^2(\Omega)$ projection into the discrete pressure ansatz space $Q_h$.
The modification in the projection equation should prevent that the otherwise artificial boundary condition $\boldsymbol{n} \cdot \nabla p_h^n = \dots =\boldsymbol{n} \cdot \nabla p_h^0$
dominates the error. For the Stokes problem, it can be shown that both the velocity error with respect to the $H^1(\Omega)$ norm and the
the pressure error with respect to the $L^2(\Omega)$ norm a rate of convergence as $(\Delta t)^{3/2}$ can be expected.\\
Although we did not carry out the analysis for this algorithm adapted
to our approach, we still believe that similar results hold true and can be observed numerically.
Note that the above scheme reduces to the standard incremental pressure-correction scheme analyzed in this work for $\chi=0$ .
  
To study the dependence of the error on the diffusion parameter we consider three different Reynolds numbers $Re\in\{10^{-2}, 1, 10^2\}$.
Additionally, we want to investigate whether stabilization really improves the results numerically. 
As a first result we figured out that LPS SU does not show any significant influence on the error in the considered parameter regime. 
Therefore, we just consider grad-div stabilization in the following.

In Figures~\ref{fig:Re10-0ul2}, \ref{fig:Re10-0uh1}, \ref{fig:Re10-0ul2div} and \ref{fig:Re10-0pl2} the case $Re=1$ is considered.
For the errors with respect to the $L^2(\Omega)$ and $H^1(\Omega)$ norm of the velocity we see almost no influence
whether we choose the rotational or incremental form or stabilization or not. The rates of convergence with respect to spatial discretization are again as expected.
This time we observe for both quantities approximately second order of convergence with respect to time.
For the divergence of the velocity we again see a quite big influence of the rotational compared to the standard incremental form.
In this case the grad-div stabilization is of minor importance.
Finally, for the $L^2(\Omega)$ velocity norm we see four distinct results. The rotational form gives a smaller error than the standard form and within these groups stabilization increases.
This in fact is the first result in which we see that grad-div stabilization is harmful for an error.

\begin{figure}
\centering	
\hspace*{\fill}\begin{subfigure}[b]{.39\textwidth}
\includegraphics[clip, trim=0cm 0cm 2cm 1cm, width=\textwidth]{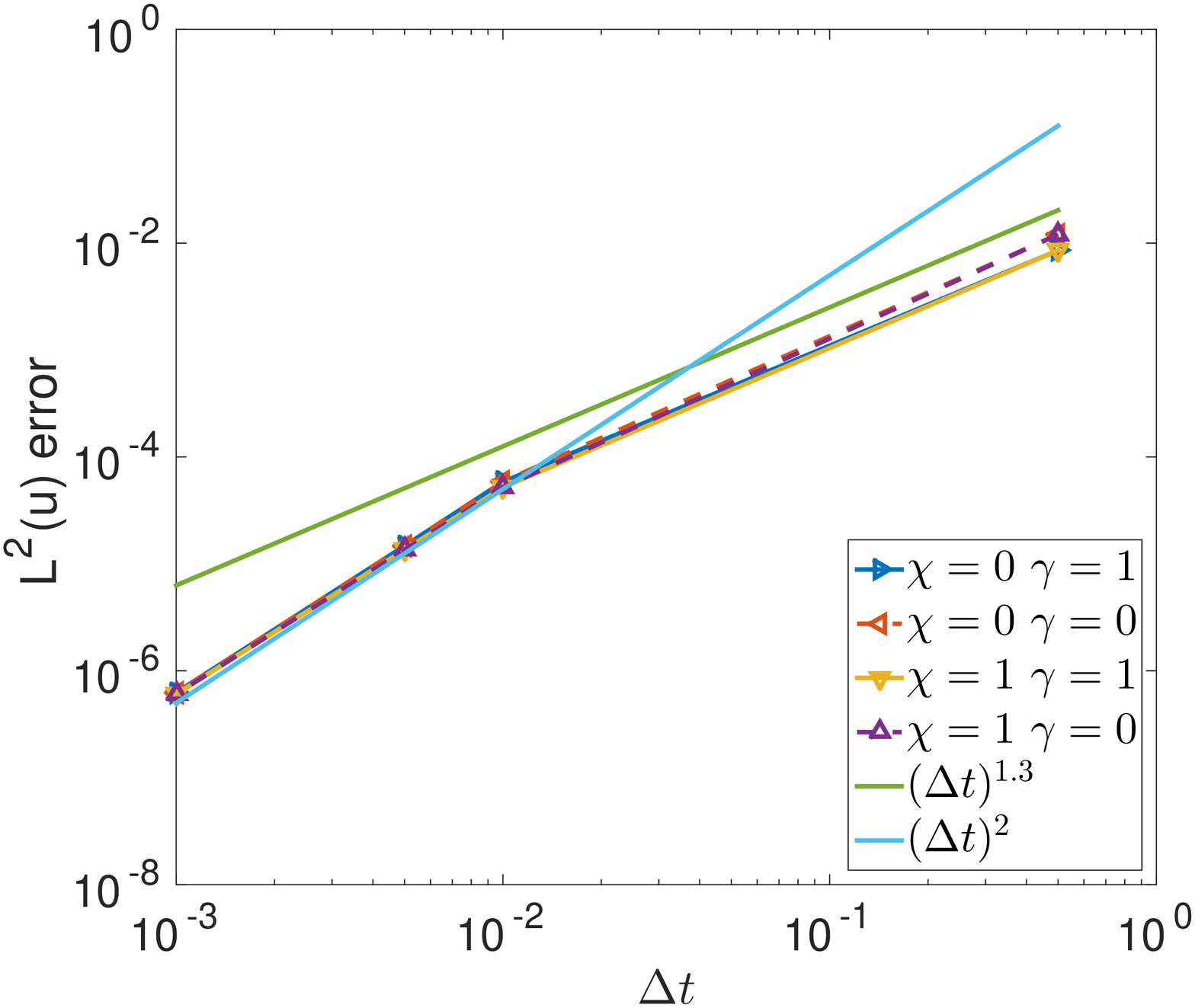}
\caption{$h=2^{-4.5}$ fixed}
  \end{subfigure}\hfill
  \begin{subfigure}[b]{.39\textwidth}
\includegraphics[clip, trim=0cm 0cm 2cm 1cm, width=\textwidth]{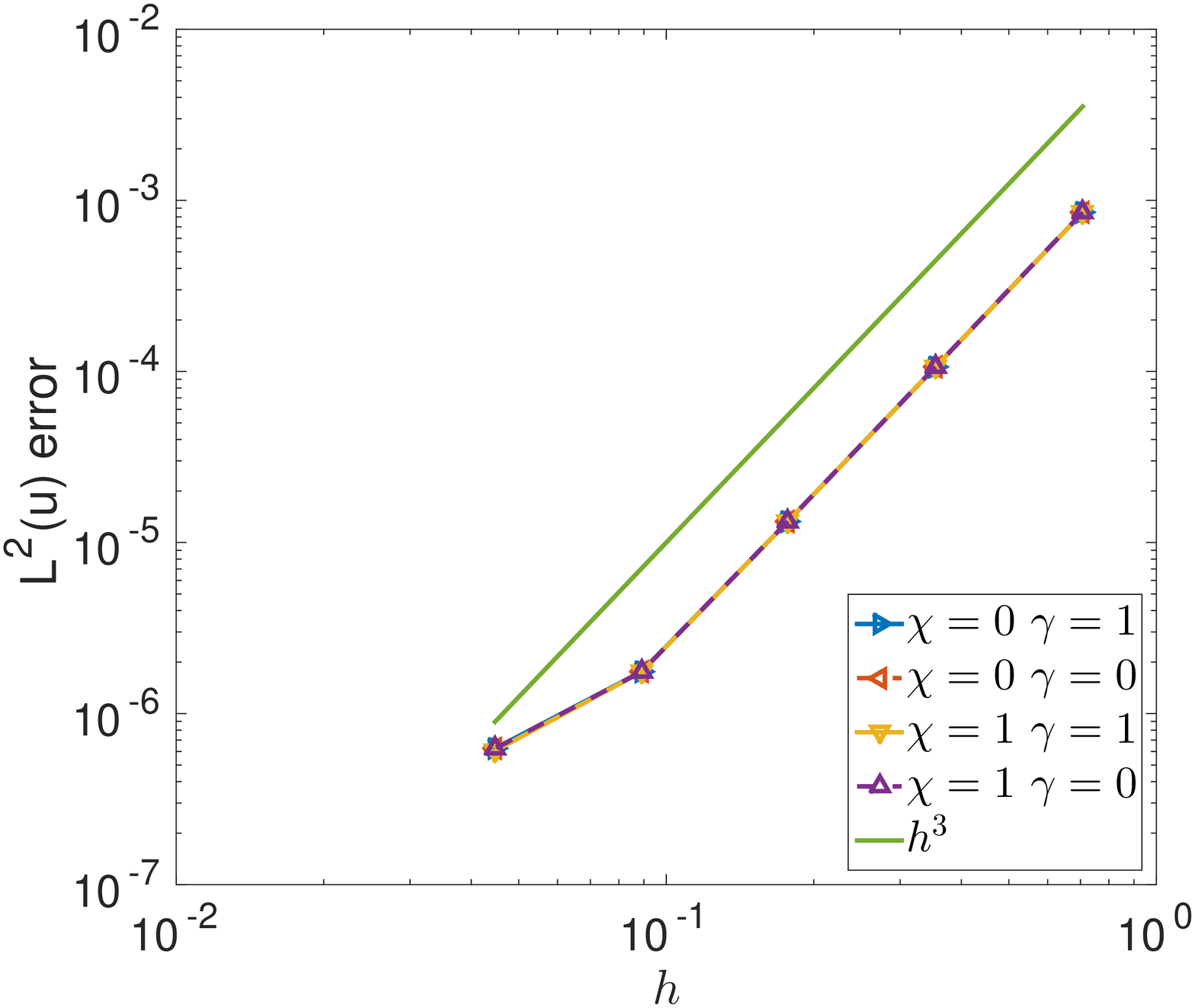}
\caption{$\Delta t=10^{-3}$ fixed}
  \end{subfigure}\hspace*{\fill}
 \caption{$Re=1$, errors for the velocity with respect to $L^2(\boldsymbol{u})$}
 \label{fig:Re10-0ul2}
\end{figure}

\begin{figure}
\centering
\hspace*{\fill}\begin{subfigure}[b]{.39\textwidth}
\includegraphics[clip, trim=0cm 0cm 2cm 1cm, width=\textwidth]{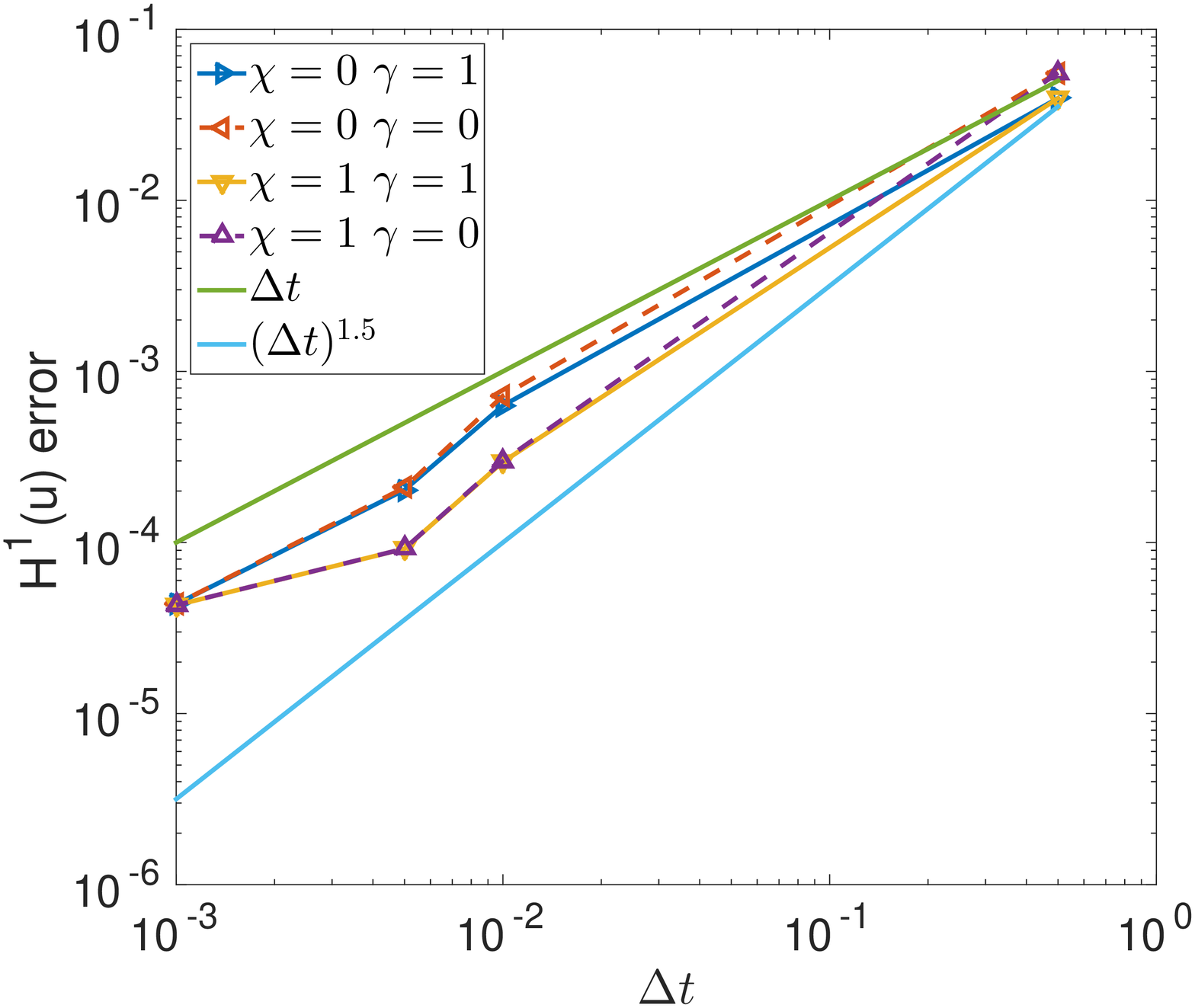}
\caption{$h=2^{-4.5}$ fixed}
  \end{subfigure}\hfill
  \begin{subfigure}[b]{.39\textwidth}
\includegraphics[clip, trim=0cm 0cm 2cm 1cm, width=\textwidth]{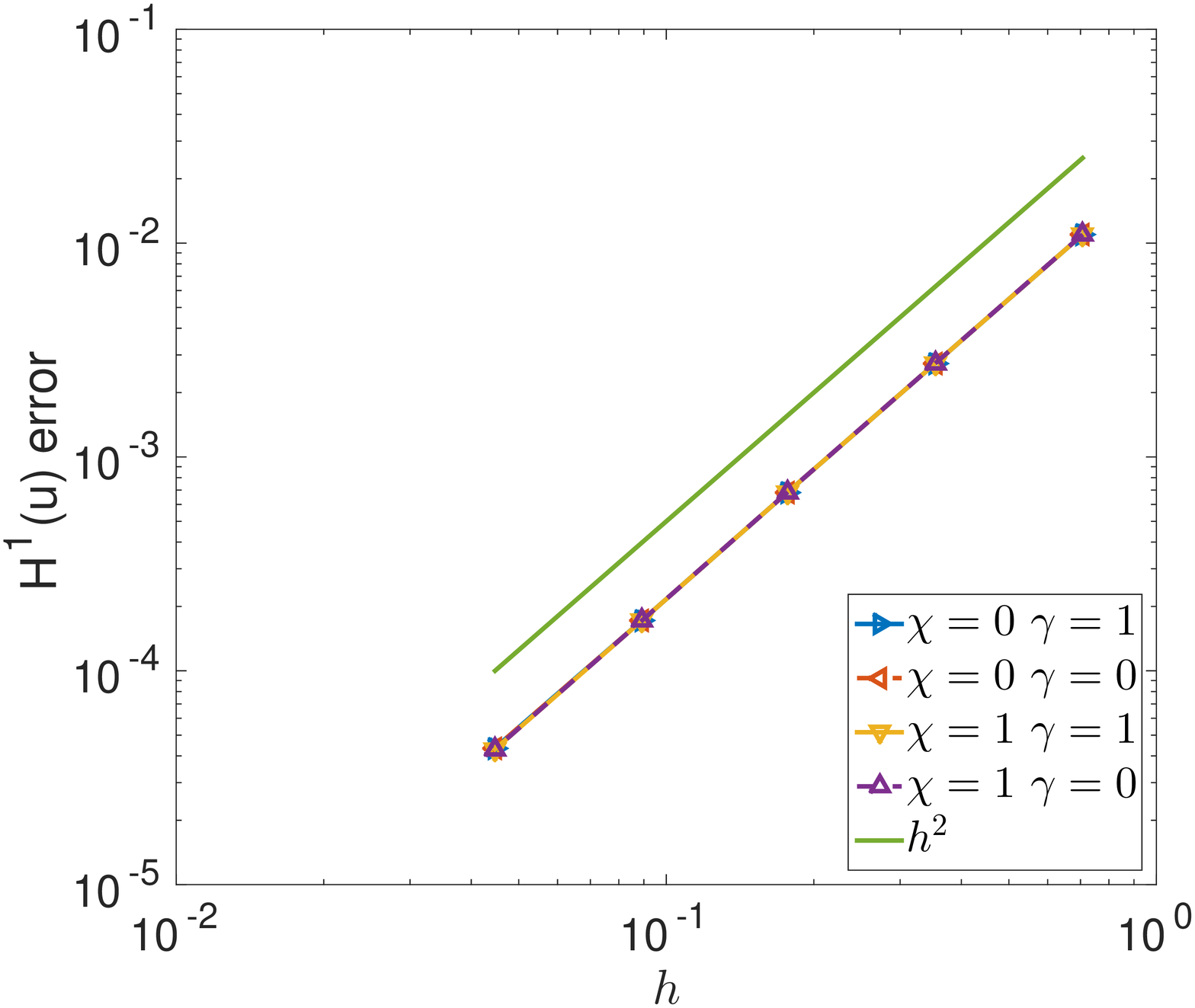}
\caption{$\Delta t=10^{-3}$ fixed}
  \end{subfigure}\hspace*{\fill}
 \caption{$Re=1$, errors for the velocity with respect to $H^1(\boldsymbol{u})$}
 \label{fig:Re10-0uh1}
\end{figure}

\begin{figure}
\centering
\hspace*{\fill}\begin{subfigure}[b]{.39\textwidth}
\includegraphics[clip, trim=0cm 0cm 2cm 1cm, width=\textwidth]{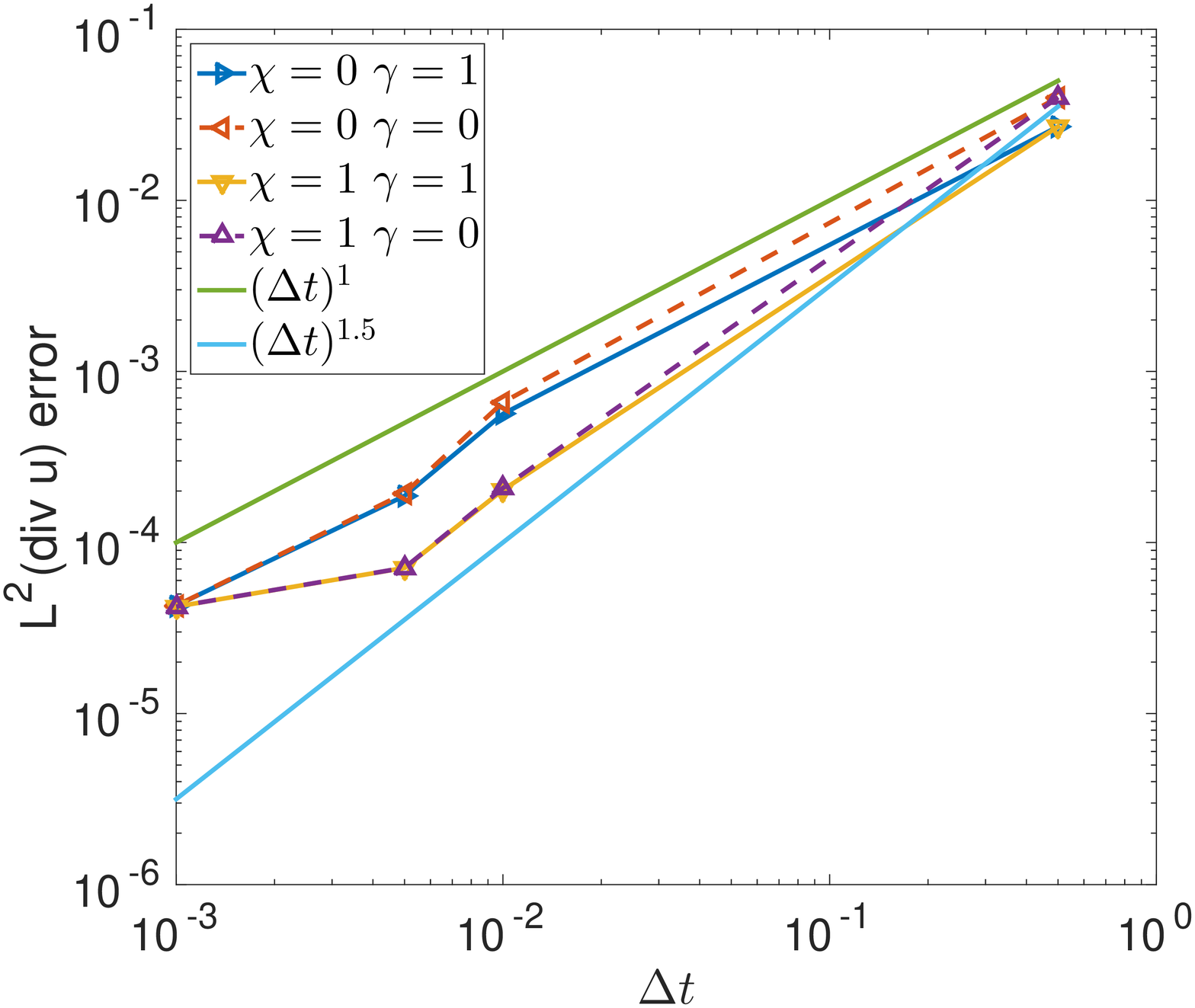}
\caption{$h=2^{-4.5}$ fixed}
  \end{subfigure}\hfill
  \begin{subfigure}[b]{.39\textwidth}
\includegraphics[clip, trim=0cm 0cm 2cm 1cm, width=\textwidth]{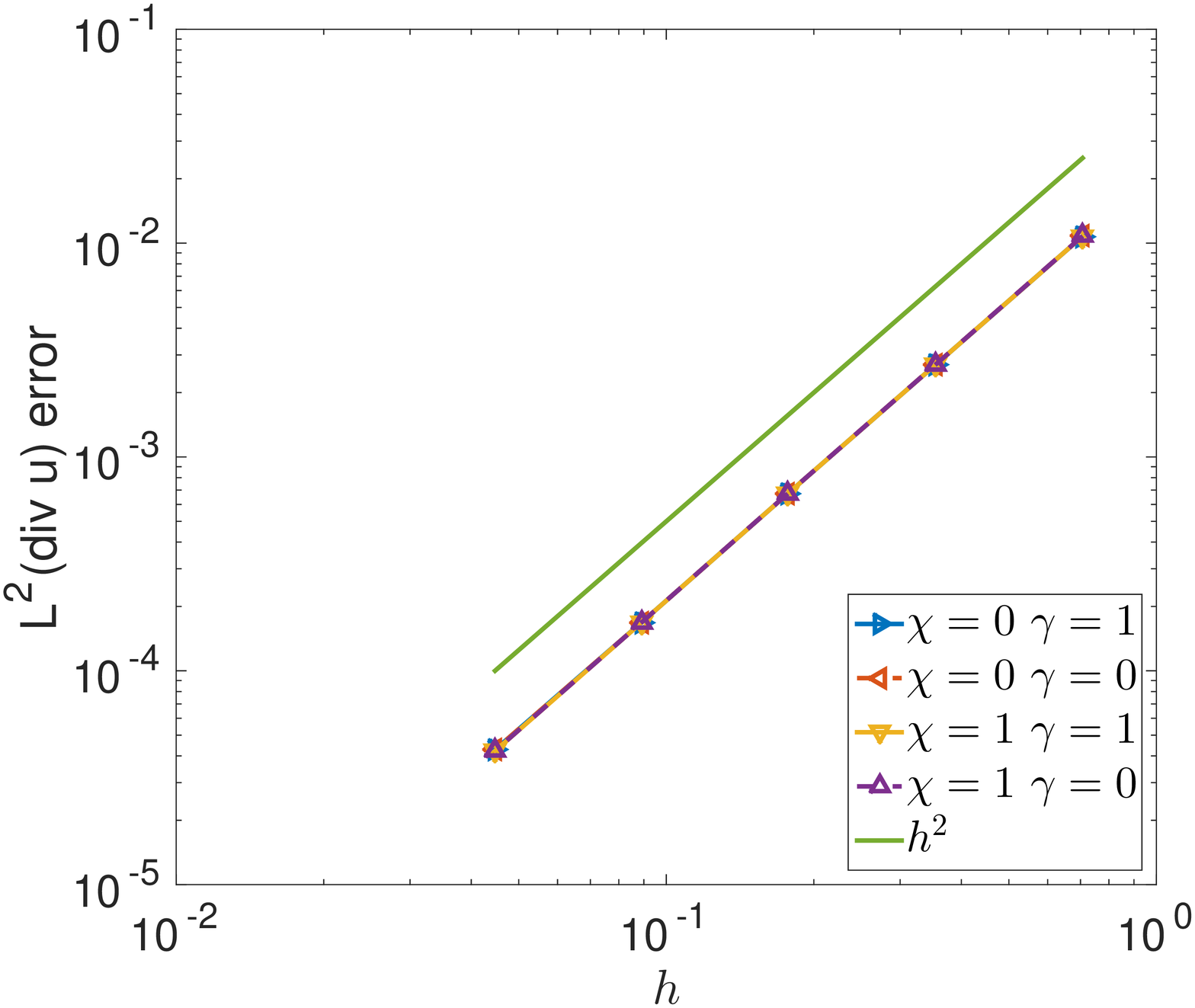}
\caption{$\Delta t=10^{-3}$ fixed}
  \end{subfigure}\hspace*{\fill}
  \caption{$Re=1$, errors for the velocity with respect to $L^2(\nabla\cdot\boldsymbol{u})$}
  \label{fig:Re10-0ul2div}
 \end{figure}

\begin{figure}
\centering
\hspace*{\fill}\begin{subfigure}[b]{.39\textwidth}
\includegraphics[clip, trim=0cm 0cm 2cm 1cm, width=\textwidth]{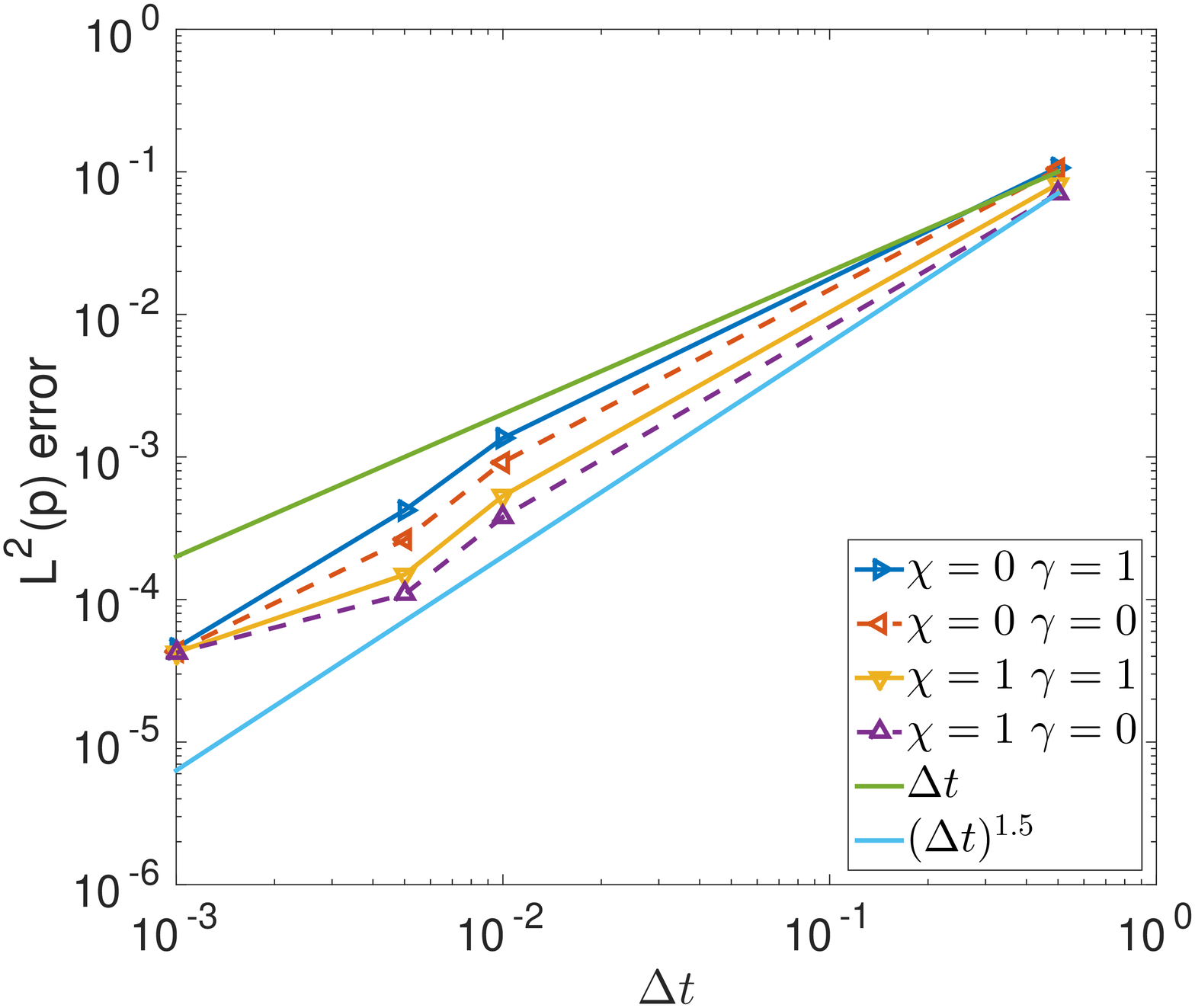}
\caption{$h=2^{-4.5}$ fixed}
  \end{subfigure}\hfill
  \begin{subfigure}[b]{.39\textwidth}
\includegraphics[clip, trim=0cm 0cm 2cm 1cm, width=\textwidth]{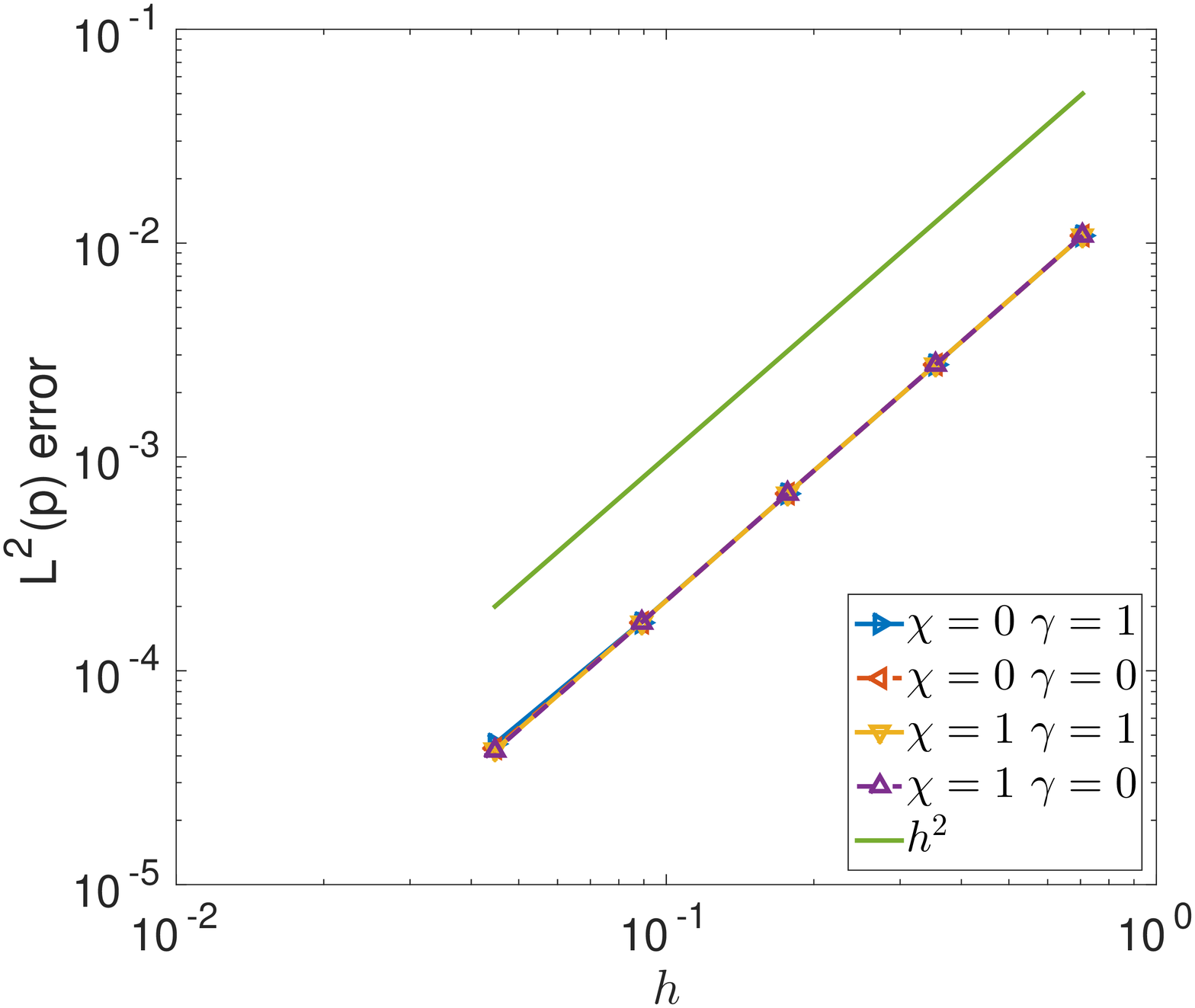}
\caption{$\Delta t=10^{-3}$ fixed}
  \end{subfigure}\hspace*{\fill}
\caption{$Re=1$, errors for the pressure with respect to $L^2(p)$}
  \label{fig:Re10-0pl2}
  \end{figure}

Finally, we consider $Re=10^2$ in Figures~\ref{fig:Re10+2ul2}, 
\ref{fig:Re10+2uh1}, \ref{fig:Re10+2ul2div} and \ref{fig:Re10+2pl2}.
For the the first three errors we get a clear picture. Grad-div stabilization diminishes the error by a fixed factor. 
In fact our analysis tells us in this parameter regime that grad-div stabilization improves the dependence on the Reynolds number $Re$ from
$Re^{1}$ to $Re^{1/2}$. This is exactly the behavior that we observe. On the other hand, whether we choose the rotational or incremental form is of minor
influence. Since the correction terms vanishes with decreasing $\nu$ this is not too surprising.
The rates of convergence that we observe are again optimal in the sense that we achieve the rates of the interpolation operators with respect to spatial discretization and for the time discretization a behavior like $(\Delta t)^2$ respectively $(\Delta t)^{1.5}$. 
Note that in view of the analysis carried out for these types of schemes the results are superconvergent with respect to time discretization and the LPS and the pressure error.\\
Compared to the velocity, for the pressure error the behavior with respect to stabilization is the other way around, similar to the results for $Re=1$. Again, the smallest error is obtained when no stabilization is used and the 
effect of the rotational correction is negligible.
This reults strengthenes that the choice of stabilization parameters strongly depends on the error norm that is to be minimized.
There is apparently no choice rule that is best for both velocity and pressure.
Experiments with higher Reynolds number show the same qualitative behavior of the errors.\\

 In summary, one might say that the rotational correction does never harm and even improves the results considerably if the viscosity $\nu$ is not too small.
Grad-div stabilization however seems to be beneficial whenever the main interest is in the velocity solution. For the pressure the above example suggests that disabling the stabilization is the best option.

\begin{figure}
\centering
\hspace*{\fill}\begin{subfigure}[b]{.39\textwidth}
\includegraphics[clip, trim=0cm 0cm 2cm 1cm, width=\textwidth]{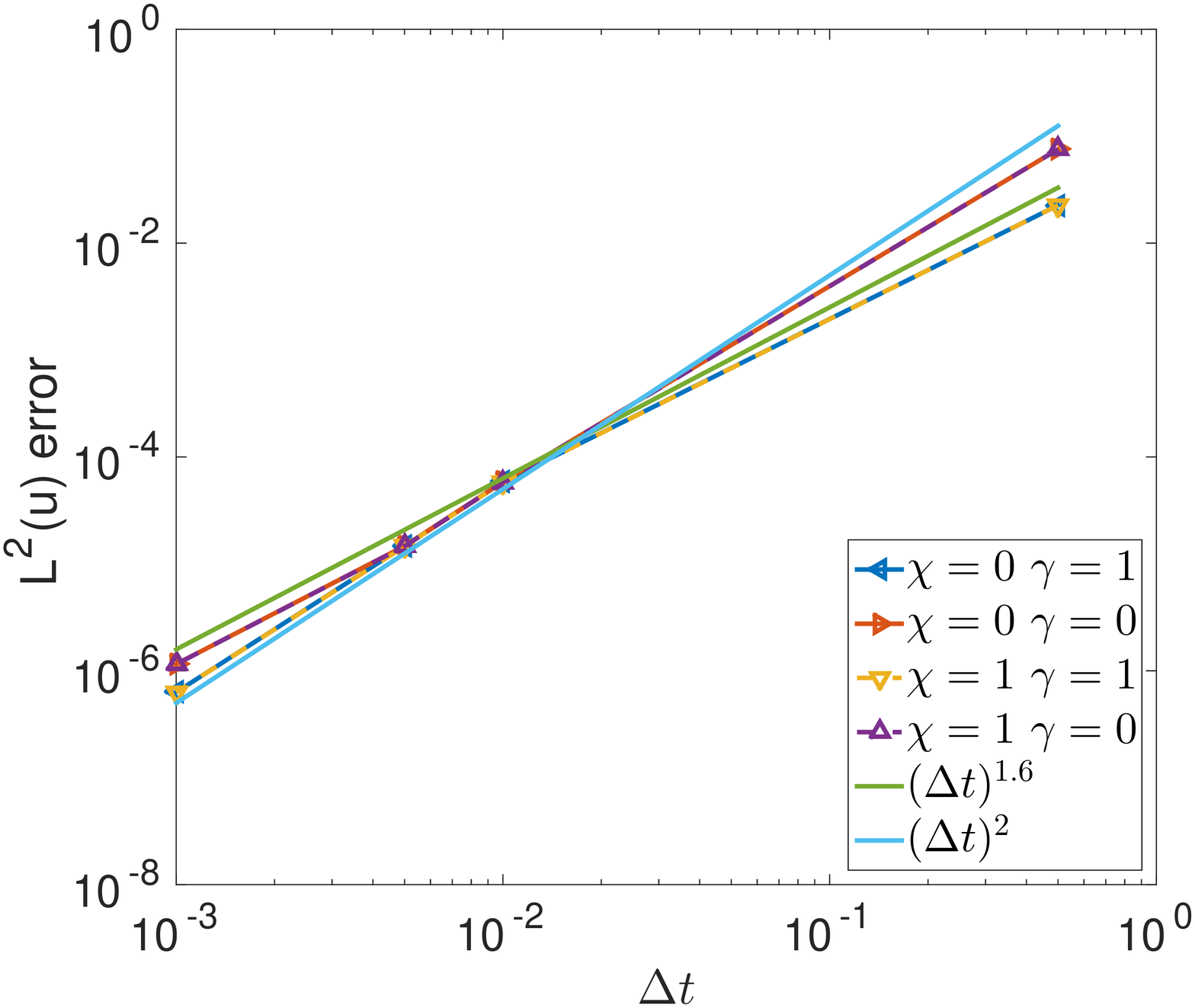}
\caption{$h=2^{-4.5}$ fixed}
  \end{subfigure}\hfill
  \begin{subfigure}[b]{.39\textwidth}
\includegraphics[clip, trim=0cm 0cm 2cm 1cm, width=\textwidth]{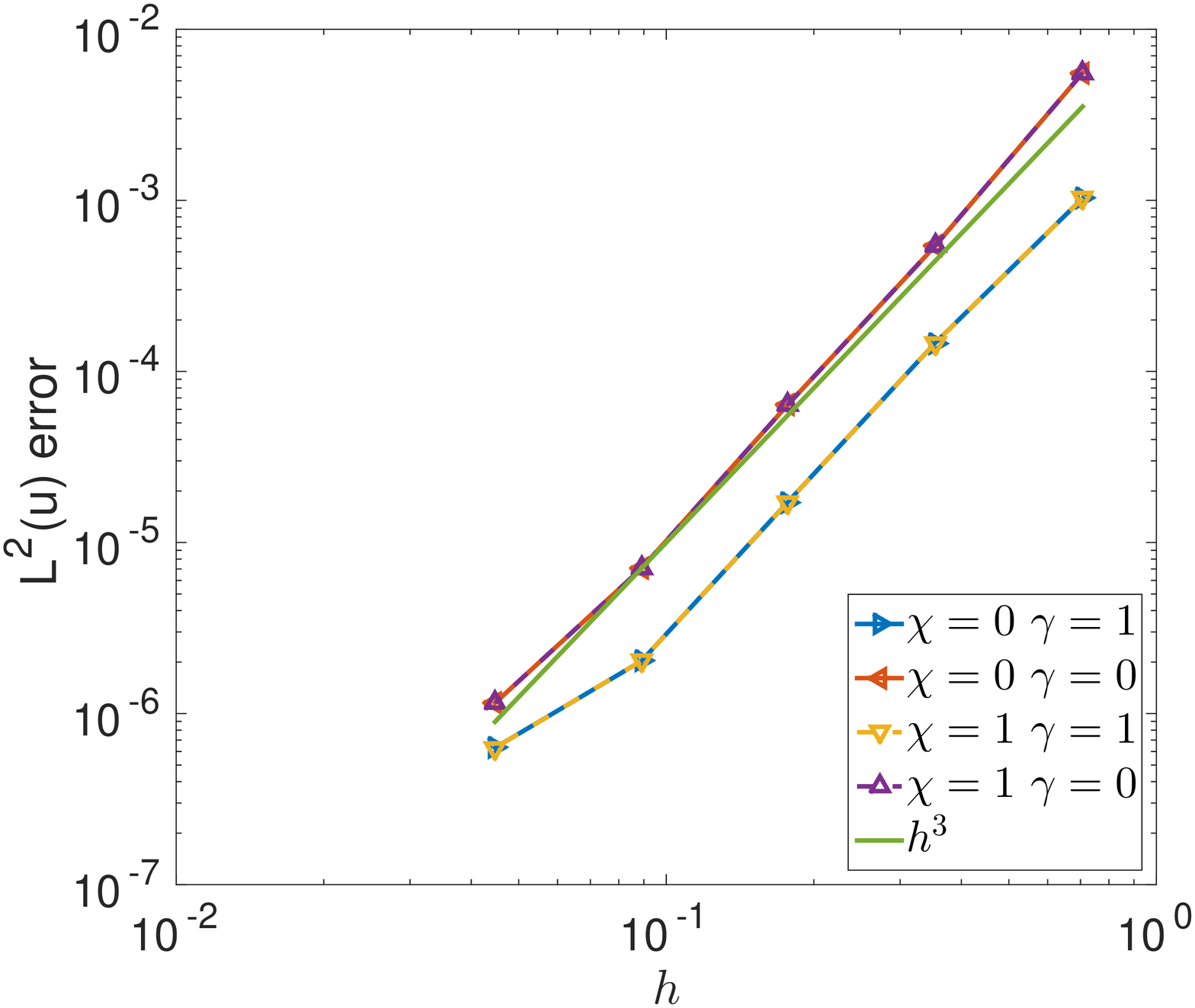}
\caption{$\Delta t=10^{-3}$ fixed}
  \end{subfigure}\hspace*{\fill}
  \caption{$Re=10^2$, errors for the velocity with respect to $L^2(\boldsymbol{u})$}
  \label{fig:Re10+2ul2}
  \end{figure}

\begin{figure}
\centering
\hspace*{\fill}\begin{subfigure}[b]{.39\textwidth}
\includegraphics[clip, trim=0cm 0cm 2cm 1cm, width=\textwidth]{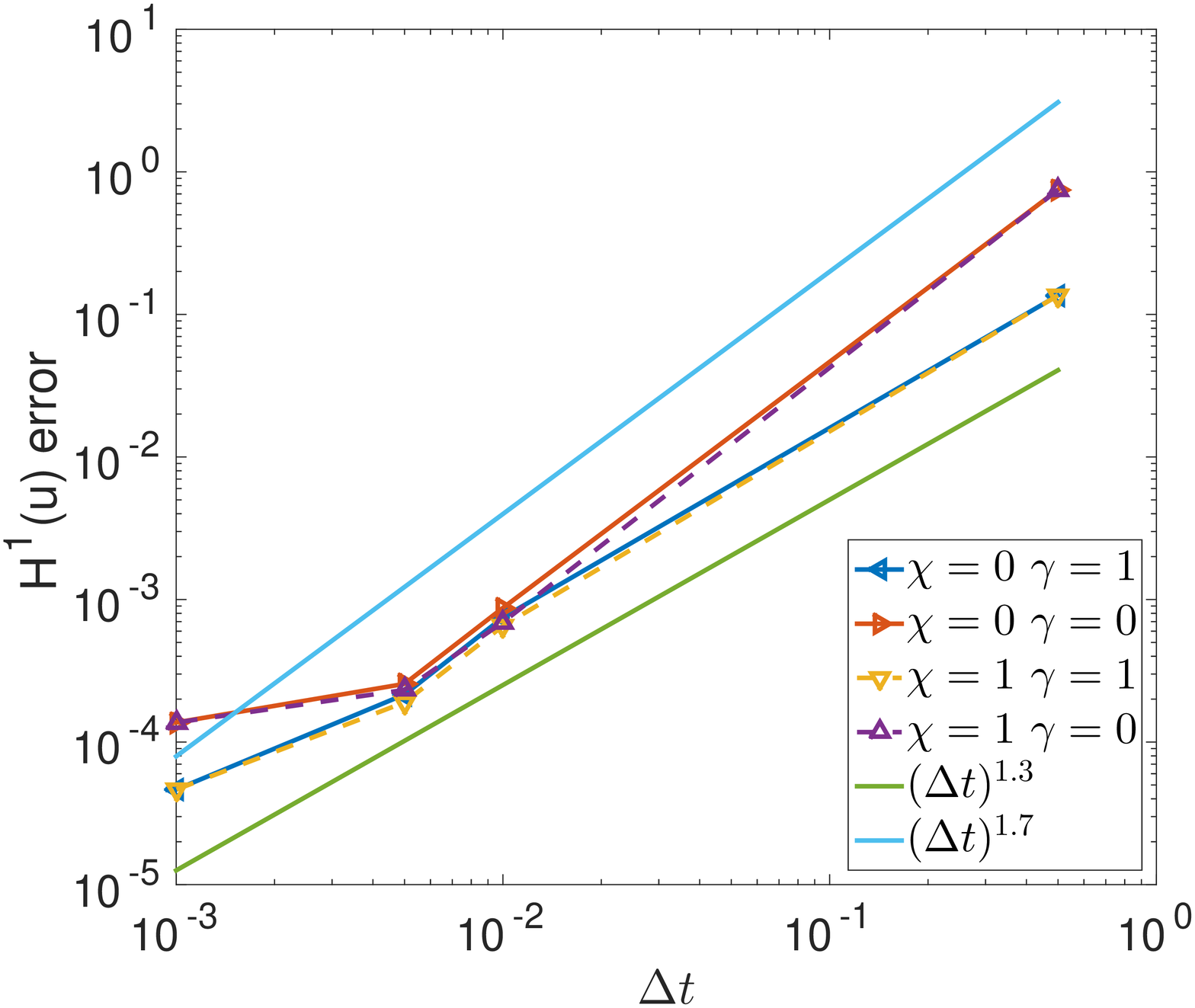}
\caption{$h=2^{-4.5}$ fixed}
  \end{subfigure}\hfill
  \begin{subfigure}[b]{.39\textwidth}
\includegraphics[clip, trim=0cm 0cm 2cm 1cm, width=\textwidth]{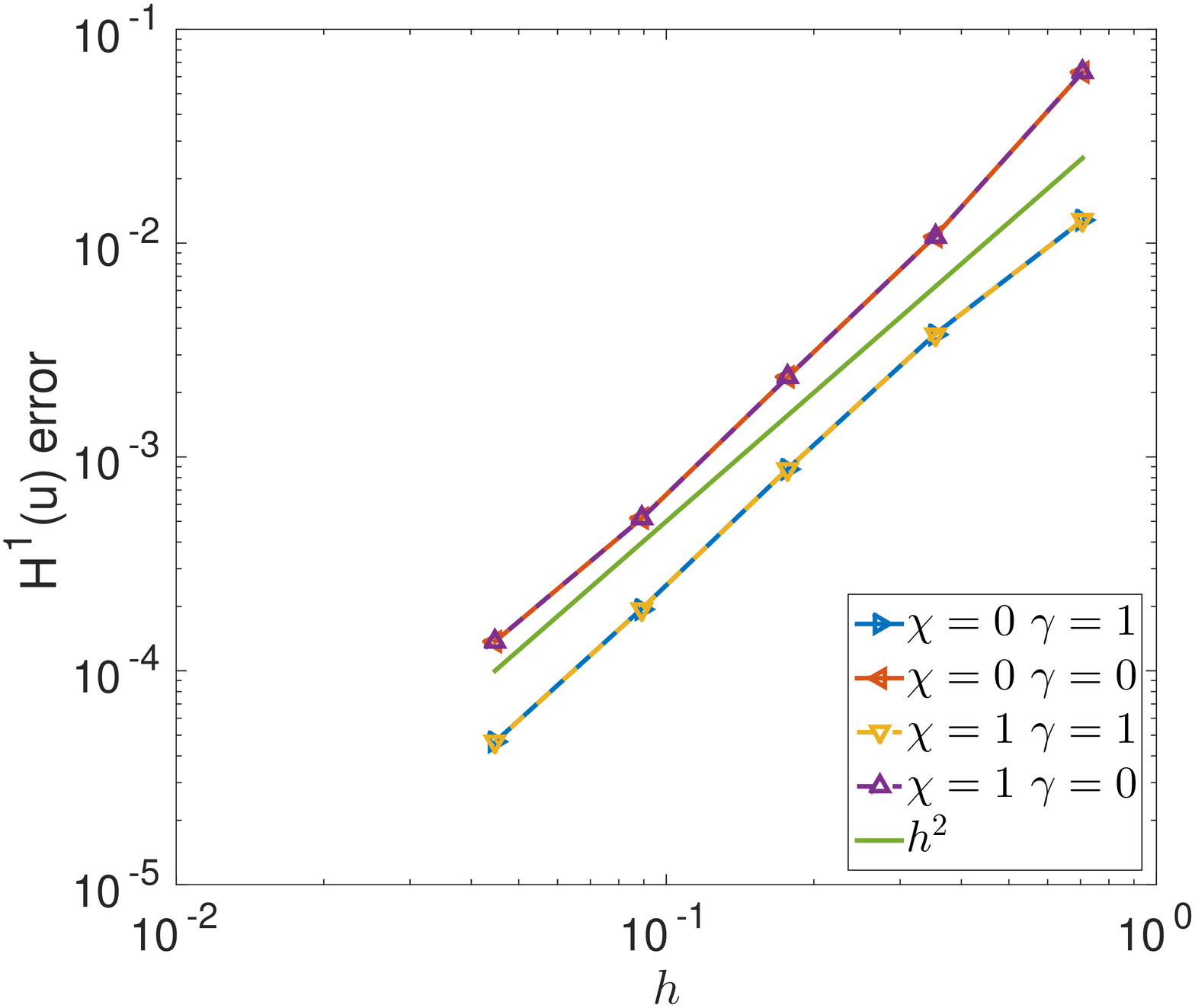}
\caption{$\Delta t=10^{-3}$ fixed}
  \end{subfigure}\hspace*{\fill}
  \caption{$Re=10^2$, errors for the velocity with respect to $H^1(\boldsymbol{u})$}
  \label{fig:Re10+2uh1}
  \end{figure}

\begin{figure}
\centering
\hspace*{\fill}\begin{subfigure}[b]{.39\textwidth}
\includegraphics[clip, trim=0cm 0cm 2cm 1cm, width=\textwidth]{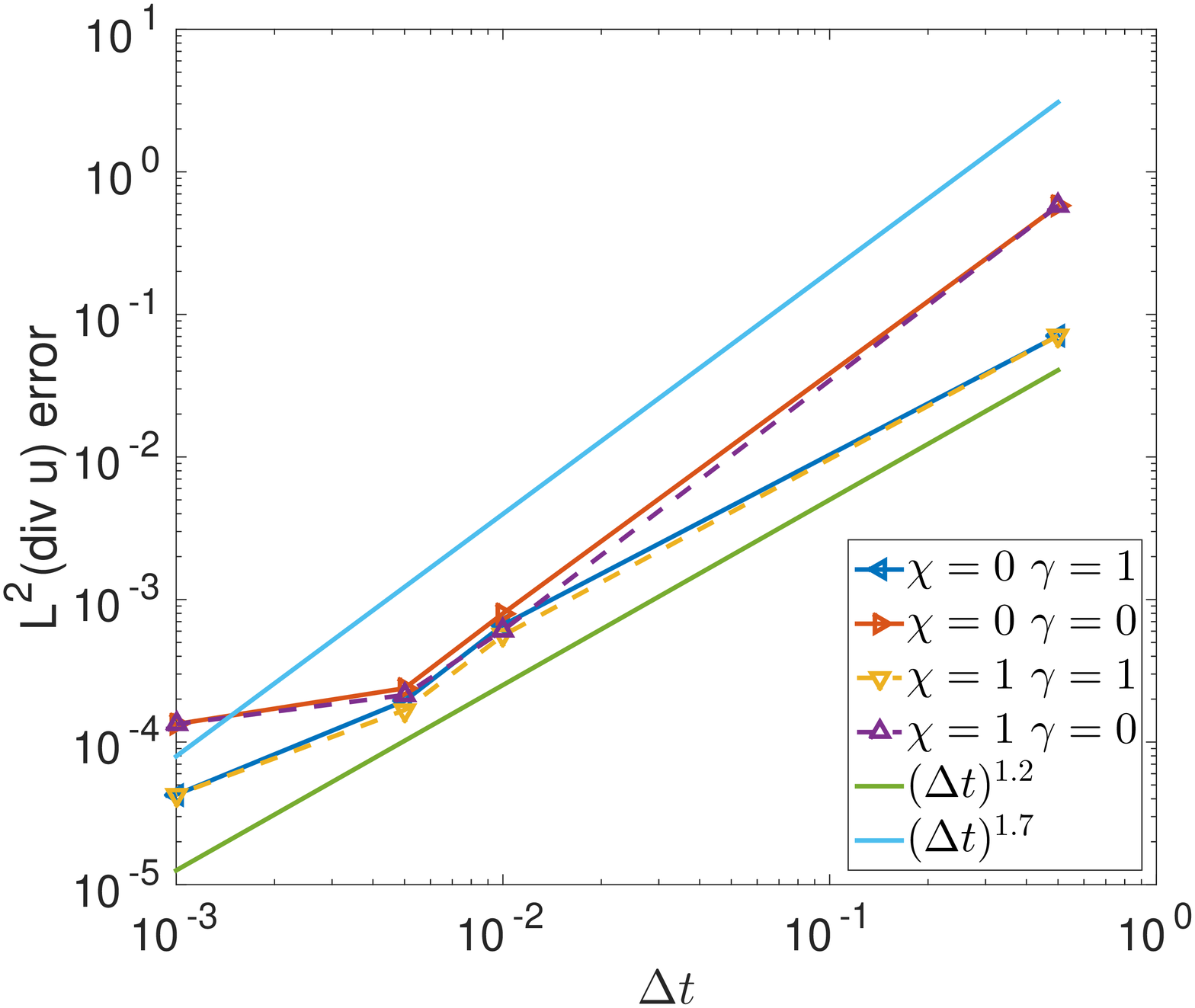}
\caption{$h=2^{-4.5}$ fixed}
  \end{subfigure}\hfill
  \begin{subfigure}[b]{.39\textwidth}
\includegraphics[clip, trim=0cm 0cm 2cm 1cm, width=\textwidth]{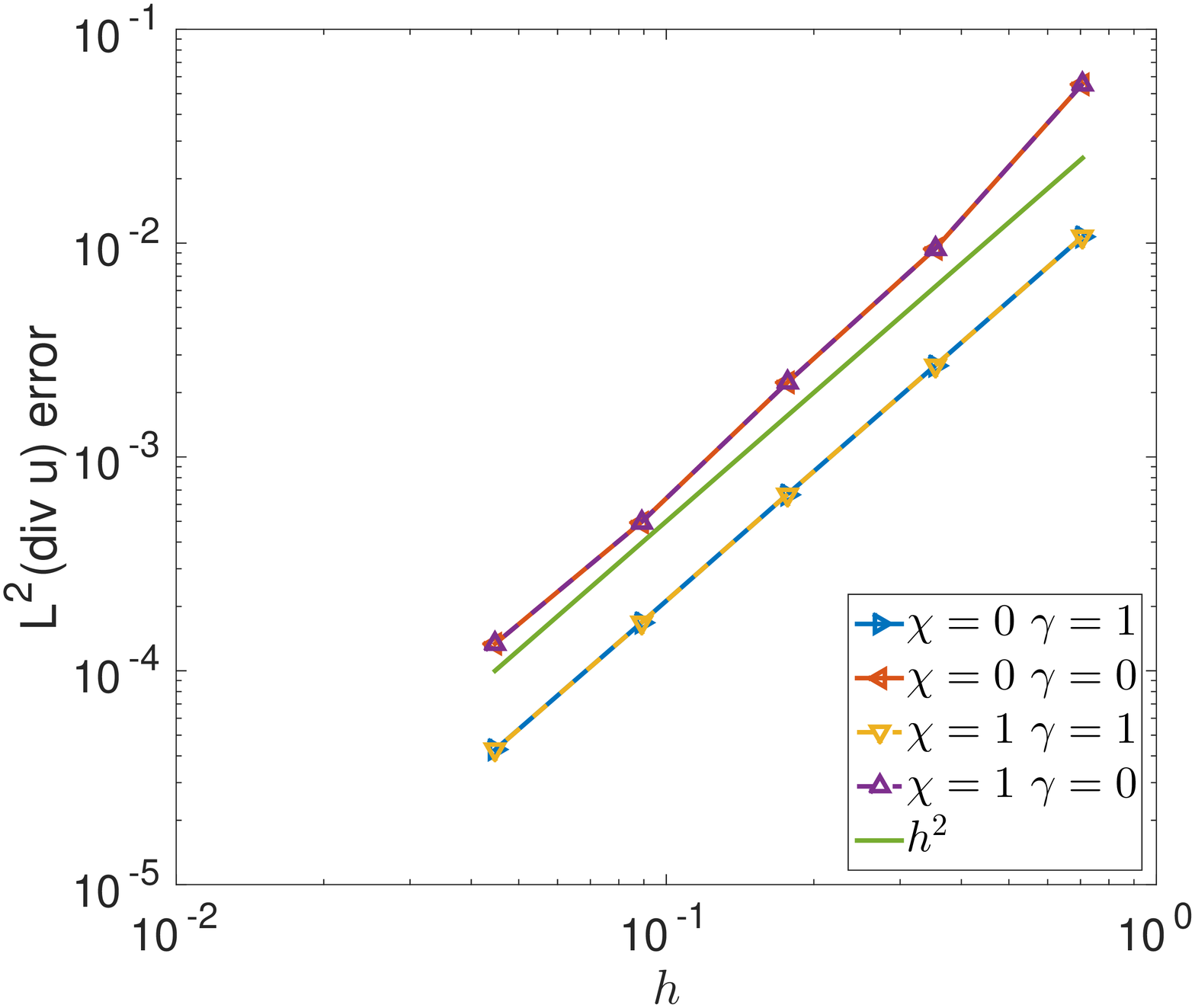}
\caption{$\Delta t=10^{-3}$ fixed}
  \end{subfigure}\hspace*{\fill}
  \caption{$Re=10^2$, errors for the velocity with respect to $L^2(\nabla\cdot\boldsymbol{u})$}
  \label{fig:Re10+2ul2div}
  \end{figure}

\begin{figure}
\centering
\hspace*{\fill}\begin{subfigure}[b]{.39\textwidth}
\includegraphics[clip, trim=0cm 0cm 2cm 1cm, width=\textwidth]{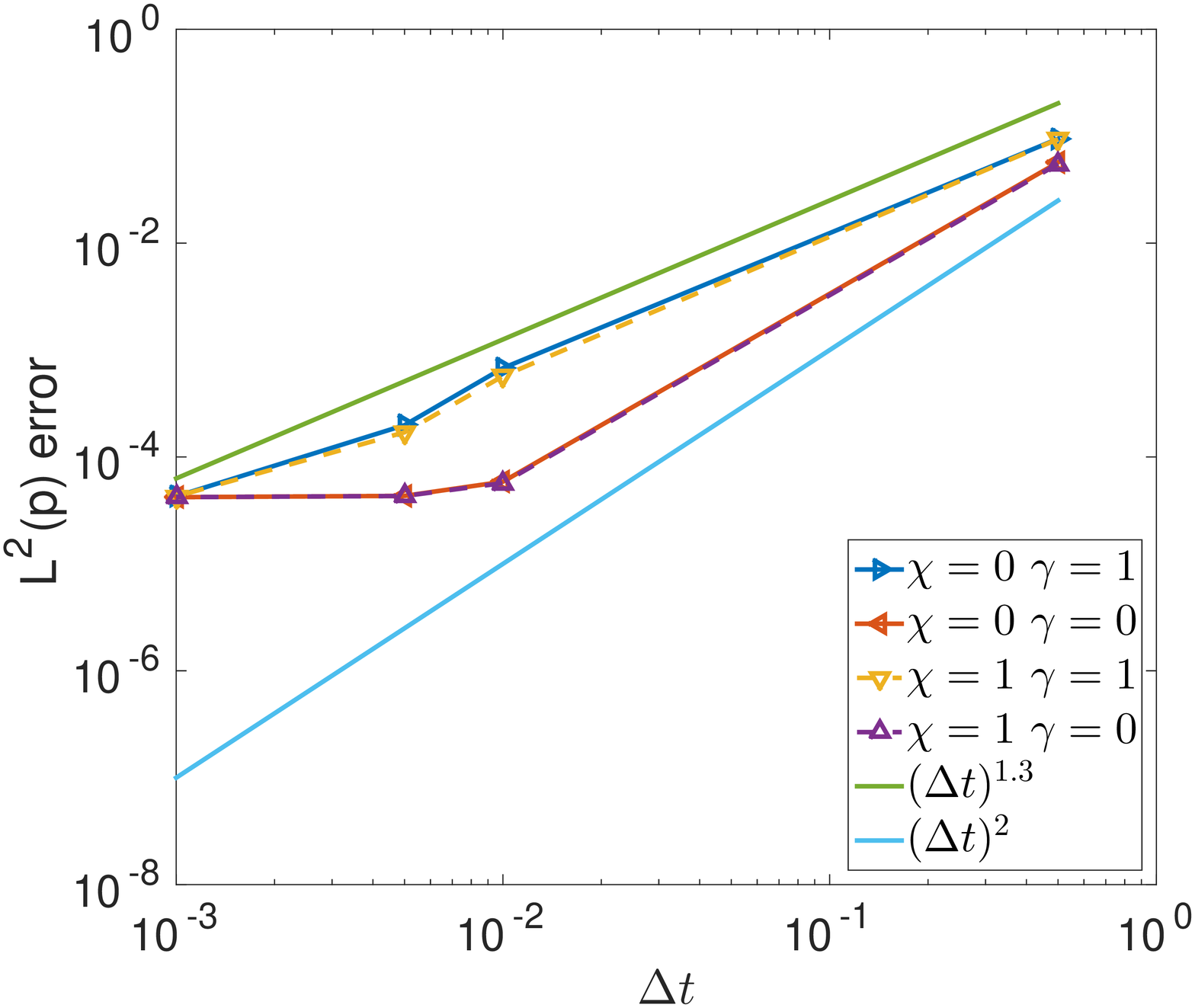}
\caption{$h=2^{-4.5}$ fixed}
  \end{subfigure}\hfill
  \begin{subfigure}[b]{.39\textwidth}
\includegraphics[clip, trim=0cm 0cm 2cm 1cm, width=\textwidth]{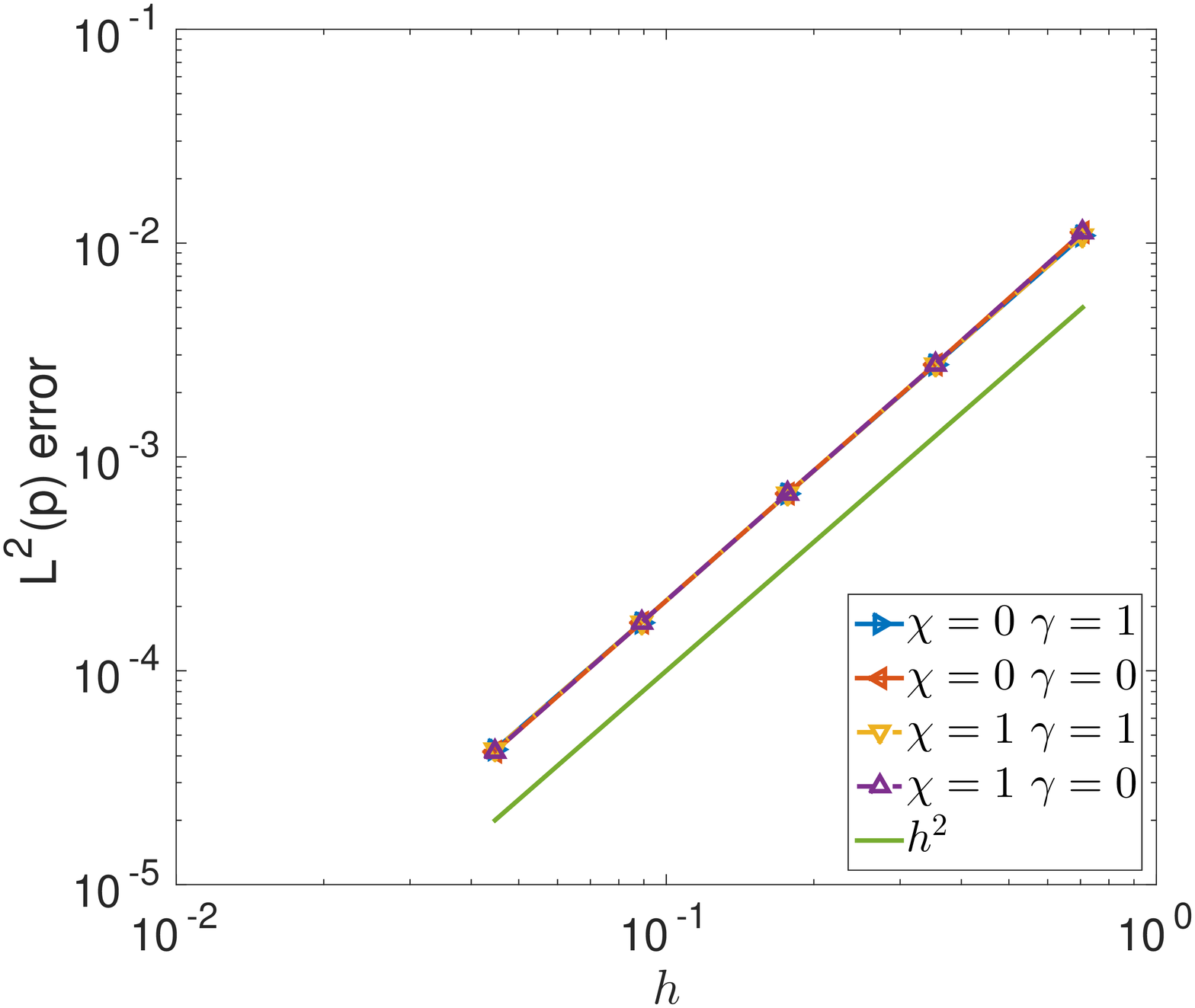}
\caption{$\Delta t=10^{-3}$ fixed}
  \end{subfigure}\hspace*{\fill}
  \caption{$Re=10^2$, errors for the pressure with respect to $L^2(p)$}
  \label{fig:Re10+2pl2}
\end{figure}

\subsection{Taylor-Green Vortex}
For an example in which we expect LPS SU to play a major role, we consider the case of a three-dimensional Taylor-Green vortex (TGV). In particular, we are interested in how good the stabilizations may serve as  implicit subgrid model for isotropic turbulence. 

We consider the flow in a periodic box 
$\Omega=(0, a)^3$ with some $a>0$ that we vary as needed. With $b>0$, the initial values are
\begin{align*}
{\boldsymbol{u}}_0&= b\cdot\begin{pmatrix}
	  \cos\left(\frac{2\pi}{a} x\right)\sin\left(\frac{2\pi}{a}y\right)\sin\left(\frac{2\pi}{a}z\right) \\
	  -\sin\left(\frac{2\pi}{a}x\right)\cos\left(\frac{2\pi}{a}y\right)\sin\left(\frac{2\pi}{a}z\right) \\ 
	  0
	  \end{pmatrix}, \\
p_0 &= b\cdot\frac{1}{16}\left(\cos\left(\frac{4\pi}{a}x\right)+\cos\left(\frac{4\pi}{a}y\right)\right) \left( \cos\left(\frac{4\pi}{a}z\right)+2\right)
\end{align*}
and we choose $a=2\pi,~b=1$ for $\nu=10^{-4}$ and $a=8/\sqrt{3},~b=1/10$ for $\nu=10^{-5}$. 
The time interval is chosen according to $t\in[0, 10/b]$ and we use Taylor-Hood elements for the discretization.
For the coarse space $\mathbb Q_1$ elements are used.

To evaluate the numerical results, we are interested in the energy spectrum at time $T=10/b$ given as 
\begin{align*}
E(k, t) &= \frac{1}{2} \sum_{k-\frac{1}{2}\leq |\boldsymbol{ k}|\leq k+\frac{1}{2}} \hat{\boldsymbol{u}}(\boldsymbol{ k}, t)\cdot \hat{\boldsymbol{u}}(\boldsymbol{k}, t)
\end{align*}
with the Fourier transform $\hat{\boldsymbol{u}}(\boldsymbol{k}, t) =\int_\Omega \boldsymbol{u}(\boldsymbol{x}, t)\exp(-i\boldsymbol{k}\boldsymbol{x}) d\boldsymbol{x}$.
For locally isotropic turbulence we expect in the inertial subrange the behavior (Kolmogorov's $-5/3$-law)
\begin{align*}
E(k, t) \sim \varepsilon^{2/3}k^{-5/3}, 
\end{align*}
where $\varepsilon$ is the turbulent dissipation rate.

In a first attempt, we consider grad-div stabilization alone (cf. Figure~\ref{fig:TGV_energy_GDandSU}(A)). The result clearly shows that the grad-div stabilization does not produce enough 
dissipation as the smallest resolved scales carry too much energy. Additional LPS SU cures this situation considerably but produces too much dissipation (Figure~\ref{fig:TGV_energy_GDandSU}(B)).

\begin{figure}[htp]
\centering
\hspace*{\fill}\begin{subfigure}[b]{.39\textwidth}
\includegraphics[trim= 0cm 0cm 2cm 1cm, clip, width=\textwidth]{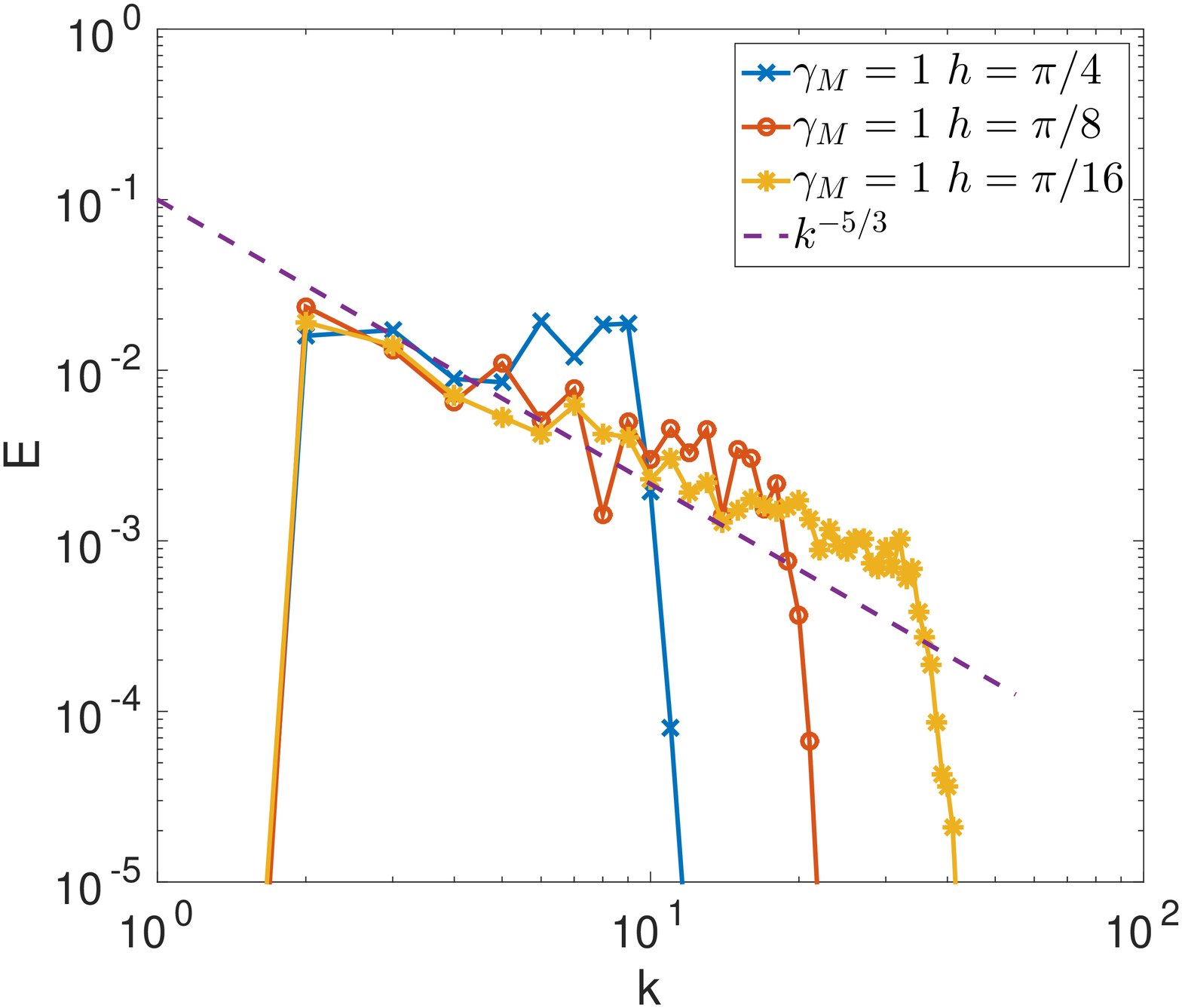}
\caption{$\gamma=1$}
  \end{subfigure}\hfill
  \begin{subfigure}[b]{.39\textwidth}
\includegraphics[trim= 0cm 0cm 2cm 1cm, clip, width=\textwidth]{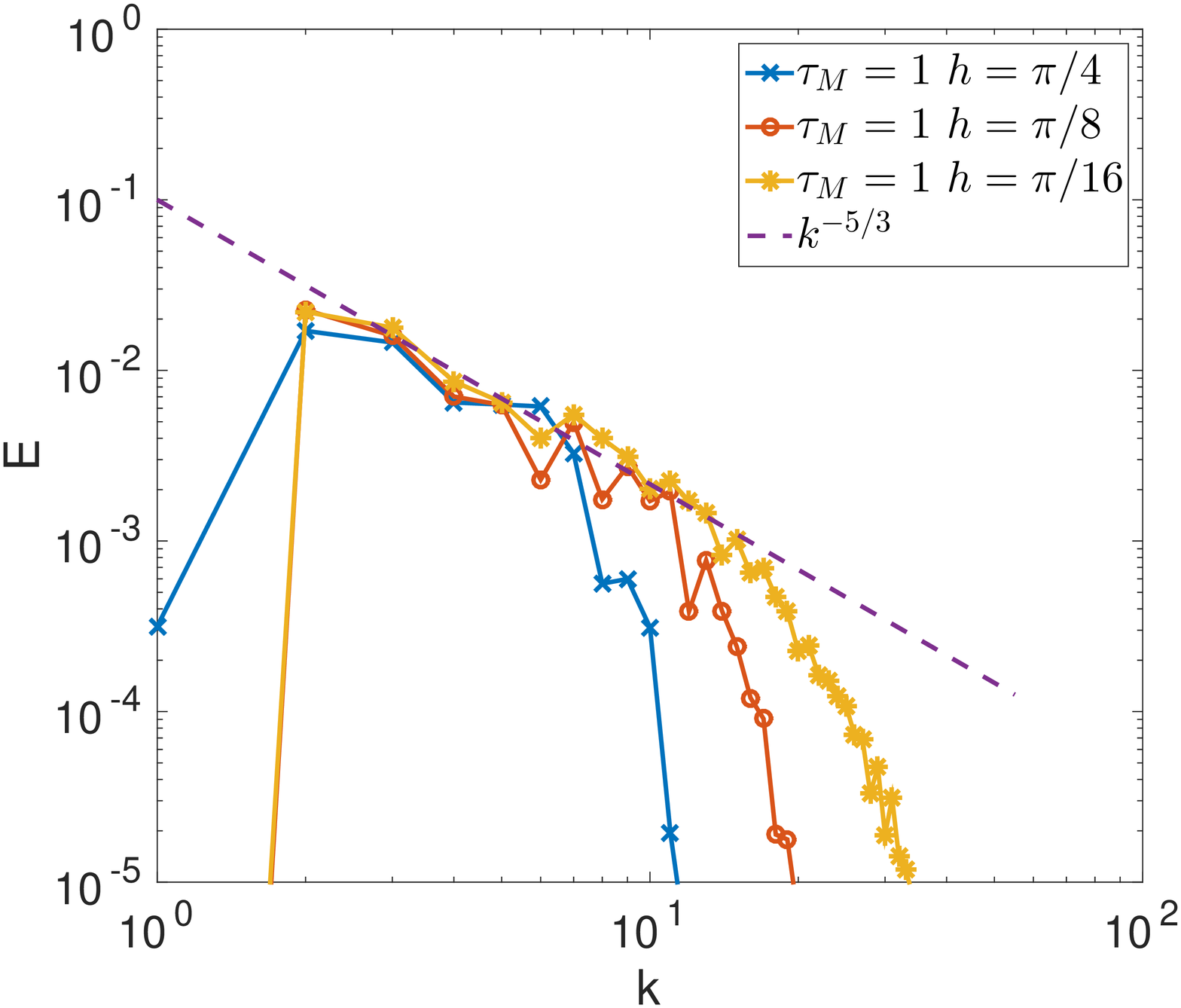}
\caption{$\gamma=1,~\tau_M=1$}
  \end{subfigure}\hspace*{\fill}
\caption{Energy spectra at $t=9$ for different mesh widths; $a=2\pi, b=1$.}
\label{fig:TGV_energy_GDandSU}
\end{figure}

Figure~\ref{fig:TGV_energy_dependenceh}(A) shows that the obtained results are comparable to those of the Smagorinsky model that is also known to bee too dissipative.
Dimensional analysis suggests to choose the parameter according to $\tau_M^n=C h/\|\widetilde{\boldsymbol{u}}_h^n\|_{\infty,M}$.
Interestingly, this choice performs comparably well as simply $\tau_M=1$ (cf. Figure~\ref{fig:TGV_energy_dependenceh}(B)).

In summary, we observe that grad-div stabilization is not sufficient in this example. However, additional LPS SU performs considerably well as implicit subgrid model in this test case.

\begin{figure}[htp]
\centering
\hspace*{\fill}\begin{subfigure}[b]{.395\textwidth}
\includegraphics[trim= 0cm 0cm 2cm 1cm, clip, width=\textwidth]{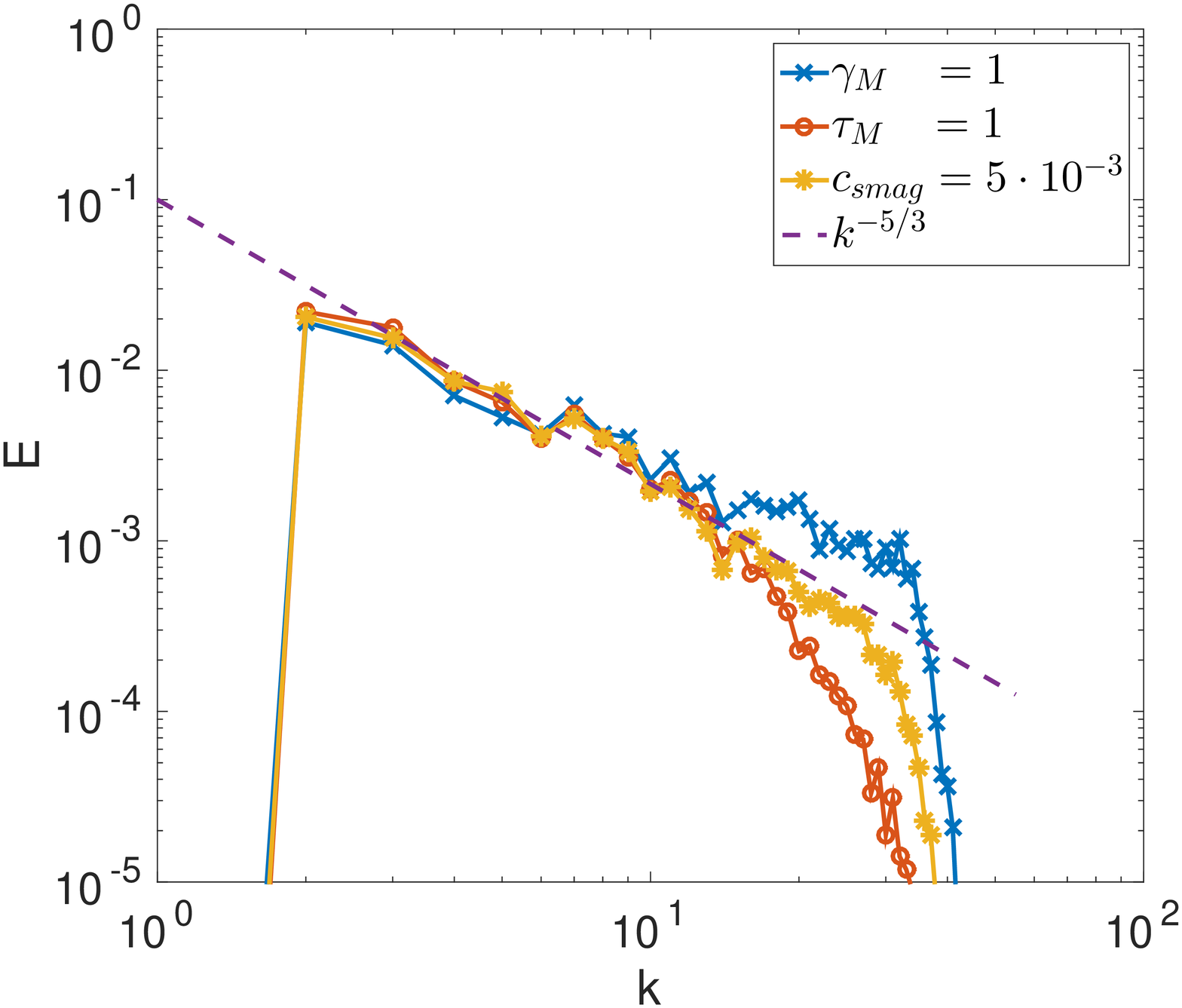}
\caption{$a=2\pi, b=1, h=\pi/16, \nu=10^{-4}$}
  \end{subfigure}\hfill
  \begin{subfigure}[b]{.395\textwidth}
\includegraphics[trim= 0cm 0cm 2cm 1cm, clip, width=\textwidth]{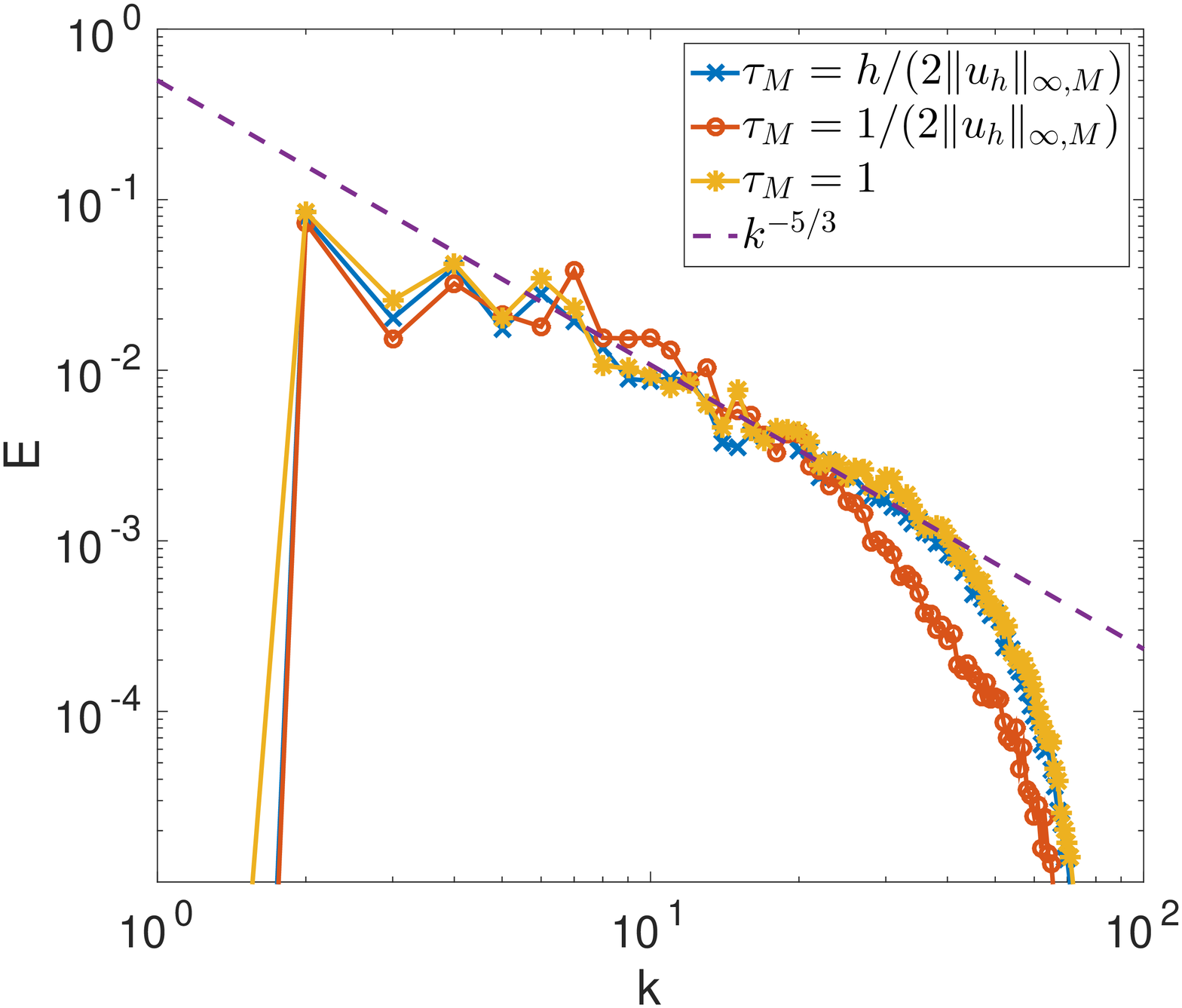}
\caption{$a=8/\sqrt{3}, b=1/10, h=1/(8\sqrt{3}), \nu=10^{-5}$}
  \end{subfigure}\hspace*{\fill}
\caption{Energy spectra for different stabilization models}
\label{fig:TGV_energy_dependenceh}
\end{figure}

\section{Summary}\label{section:discussion}

We considered two different approaches for error estimates of the time-dependent Navier-Stokes problem discretized in space by a stabilized finite element approach and in time by a BDF2-based projection 
algorithm. \\
In the first approach (Section~\ref{sec:full1}), we achieved quasi-robust error 
estimates for all considered velocity error norms. The obtained rates of 
convergence are quasi-optimal apart from the temporal convergence for the 
velocity energy error. A main ingredient for these results is a careful estimate 
of the convective terms. Interestingly, the SU stabilization is not necessary 
for these findings; grad-div stabilization is sufficient for robust error 
estimates without any restrictions on the time step size or the mesh width.\\
The second approach (Section~\ref{sec:temporal-spatial}) aimed to improve the above estimates in terms of the suboptimal rate of convergence for the energy error. Unfortunately, in this approach we were not able to mirror the estimates for the convective terms and hence did not achieve quasi-robust error estimates. 
Additionally, this approach requires some restrictions on the time step size and the mesh width with respect to $\nu$. However, introducing an intermediate time-dependent Stokes problem allowed us to get quasi-optimal error estimates with respect to time in the limiting case $h\to 0$. Opposed to the first approach, we here needed to assume additional regularity on some intermediate solution. The main problem for omitting this intermediate step is the treatment of the nonlinear stabilization for an estimate on in the norm given by the inverse Stokes operator.\\
Section~\ref{sec:optimal} allowed us to combine the results from the two previous sections to get quasi-optimal error estimates for the energy norm and the LPS norm of the velocity as well as for the pressure error in the $L^2(\Omega)$ norm.\\
In combination, we observe tight restrictions on the LPS parameter $\tau_M^n$ if 
we want to control the energy discretization error comparably good to the 
interpolation error. On the other hand, the requirements for the LPS error are 
relatively mild as the results for that norm are improved by the second 
approach.\\
These findings are confirmed by the numerical examples in Section~\ref{sec:numerics}. For an academic example with a suitable chosen forcing term, we compared the impact of a rotational correction as well as the impact of the grad-div stabilization. Interestingly, we observed rates of convergence that are much better as the ones proven in this work and often also better than the ones proven for the rotational scheme in the Stokes case in \cite{guermond2004error}. Furthermore, we clearly saw that the advantage that a rotational formulation gives vanishes for decreasing $\nu$. In these cases the influence of the grad-div stabilization becomes dominant. Basically the velocity error is decreased by the factor $1/\sqrt{\nu}$. On the other hand, for small $\nu$ we see a negative effect on the pressure error if the error due to time discretization is dominant.\\
Due to the fact that the first test case showed no dependence on the LPS parameter, we considered afterwards homogeneous decaying isotropic turbulence in the form of a Taylor-Green vortex.
We observe that we need to add the LPS SU term to the grad-div stabilization to resolve subgrid scales. For this simple problem we achieve with this model satisfying results comparable to the Smagorinsky model.

\appendix

\section{Splittings for the Discretized Time Derivative}
In this work, we need quite often a splitting for terms of the form
\begin{align*}
 \langle 3f(n)-4g(n-1)+g(n-2), f(n)\rangle
\end{align*}
where $\langle\cdot,\cdot\rangle$ denotes a symmetric bilinear form.
Some auxiliary algebraic identities are:
\begin{align*}
 2\langle a, a-b\rangle &= |a|^2+|a-b|^2-|b|^2\\
 2\langle 3a-4b+c, a \rangle &= |a|^2+(4|a|^2-4\langle a, b\rangle +|b|^2)
  \\ &\quad
 +(|a|^2+4|b|^2+|c|^2-4\langle a, b\rangle-4\langle b, c\rangle+2\langle a, c\rangle)\\
 &\quad-|b|^2-(4|b|^2-4\langle b, c\rangle+|c|^2)\\
 &=|a|^2+|2a-b|^2+|a-2b+c|^2-|b|^2-|2b-c|^2
\end{align*}
where $|a|^2$ is an abbreviation for $\langle a, a \rangle$.
Using the abbreviations
\begin{align*}
  a=f(n), \quad b=g(n-1), \quad c=g(n-2), \quad d=g(n), 
 \end{align*}
we obtain for the desired term
 \begin{align}
 \begin{aligned}
 &2\langle 3f(n)-4g(n-1)+g(n-2), f(n)\rangle\\&=2\langle 3a-4b+c, a\rangle 
 =6\langle a- d, a\rangle +2\langle 3d-4b+c, a- d\rangle +2\langle 3d -4b+c, d\rangle \\
&=3|a|^2-3|a-d|^2-3|d|^2+2\langle 3d-4b+c, a- d\rangle
 \\&\quad
+|d|^2+|2d-b|^2+|d-2b+c|^2-|b|^2-|2b-c|^2\\
&=3|a|^2-3|a-d|^2-2|d|^2+2\langle 3d-4b+c, a- d\rangle
 \\&\quad
+|2d-b|^2+|d-2b+c|^2-|b|^2-|2b-c|^2\\
&=3|f(n)|^2-3|f(n)-g(n)|^2-2|g(n)|^2
 \\&\quad
+2\langle 3g(n)-4g(n-1)+g(n-2), f(n)- g(n)\rangle\\
&\quad+|2g(n)-g(n-1)|^2+|\partial_{tt} g(n)|^2
-|g(n-1)|^2-|2g(n-1)-g(n-2)|^2
 \end{aligned}
 \label{eqn:splitting}
\end{align}
where $\delta_t$ is the the propagation operator defined by $\delta_t f(n):=f(n)-f(n-1)$.

\section{Estimates for the Convective Term}\label{appendixconvective}

\begin{lemma}\label{lemma:convectivebound1}
For $d\leq 3$ the convective term
\begin{align*}
 c(\boldsymbol{u};\boldsymbol{v}, \boldsymbol{w}):=\frac12(((\boldsymbol{u}\cdot\nabla)\boldsymbol{v}, \boldsymbol{w})-((\boldsymbol{u}\cdot\nabla)\boldsymbol{v}, \boldsymbol{w}))
\end{align*}
can be  bounded according to
\begin{align}
\begin{aligned}
 c(\boldsymbol{u};\boldsymbol{v}, \boldsymbol{w})&\leq C
 \begin{cases}
 \|\boldsymbol{u}\|_1\|\boldsymbol{v}\|_1\|\boldsymbol{w}\|_1  
 & \forall \boldsymbol{u}, \boldsymbol{v}, \boldsymbol{w} \in H_0^1(\Omega), \\
 \|\boldsymbol{u}\|_2\|\boldsymbol{v}\|_0\|\boldsymbol{u}\|_1   
 & \forall \boldsymbol{u}\in H^2(\Omega)\cap H_0^1(\Omega), \boldsymbol{v}, \boldsymbol{w} \in H_0^1(\Omega), \\
 \|\boldsymbol{u}\|_2\|\boldsymbol{v}\|_1\|\boldsymbol{w}\|_0   
 & \forall \boldsymbol{u}\in H^2(\Omega)\cap H_0^1(\Omega), \boldsymbol{v}, \boldsymbol{w} \in H_0^1(\Omega), \\
 \|\boldsymbol{u}\|_1\|\boldsymbol{v}\|_2\|\boldsymbol{w}\|_0   
 & \forall \boldsymbol{v}\in H^2(\Omega)\cap H_0^1(\Omega), \boldsymbol{u}, \boldsymbol{w} \in H_0^1(\Omega)
 \end{cases}\\
 c(\boldsymbol{u}, \boldsymbol{v}, \boldsymbol{u})&\leq C \|\boldsymbol{u}\|_0^{1/2}\|\boldsymbol{u}\|_1^{3/2}\|\boldsymbol{v}\|_1 \quad \forall \boldsymbol{u}, \boldsymbol{v}\in H_0^1(\Omega).
 \end{aligned}
 \label{eqn:estimate_conv}  
\end{align}
\end{lemma}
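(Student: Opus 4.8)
The plan is to reduce every one of these bounds to a product of three $L^p(\Omega)$ norms via Hölder's inequality, and then to invoke the Sobolev embeddings available in space dimension $d\le 3$: namely $H^1(\Omega)\hookrightarrow L^4(\Omega)\cap L^6(\Omega)$, $H^2(\Omega)\hookrightarrow W^{1,4}(\Omega)$, and the Agmon-type embedding $H^2(\Omega)\hookrightarrow L^\infty(\Omega)$. Throughout I work with the genuinely skew-symmetric form $c(\boldsymbol{u};\boldsymbol{v},\boldsymbol{w})=\frac12[((\boldsymbol{u}\cdot\nabla)\boldsymbol{v},\boldsymbol{w})-((\boldsymbol{u}\cdot\nabla)\boldsymbol{w},\boldsymbol{v})]$ as introduced in (\ref{Varform_2}), so that each inequality amounts to estimating the two convective integrals separately and then combining them.

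First I would dispatch the $H^1\times H^1\times H^1$ bound: Hölder with exponents $(4,2,4)$ gives $|((\boldsymbol{u}\cdot\nabla)\boldsymbol{v},\boldsymbol{w})|\le\|\boldsymbol{u}\|_{L^4}\|\nabla\boldsymbol{v}\|_{L^2}\|\boldsymbol{w}\|_{L^4}$, and $H^1\hookrightarrow L^4$ turns this into $C\|\boldsymbol{u}\|_1\|\boldsymbol{v}\|_1\|\boldsymbol{w}\|_1$; the second convective integral is identical after relabelling. For the bounds that carry an $H^2$ factor the idea is to spend the extra regularity so that the lowest-order factor only ever sits in $L^2$. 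When $\boldsymbol{u}\in H^2$ I estimate $((\boldsymbol{u}\cdot\nabla)\boldsymbol{v},\boldsymbol{w})\le\|\boldsymbol{u}\|_{L^\infty}\|\nabla\boldsymbol{v}\|_{L^2}\|\boldsymbol{w}\|_{L^2}$ and apply the Agmon embedding. The companion integral $((\boldsymbol{u}\cdot\nabla)\boldsymbol{w},\boldsymbol{v})$ carries a derivative on the $L^2$-factor $\boldsymbol{w}$, so I would integrate by parts — legitimate because $\boldsymbol{v},\boldsymbol{w}\in H_0^1(\Omega)$ makes the boundary contribution vanish — to rewrite it as $-((\boldsymbol{u}\cdot\nabla)\boldsymbol{v},\boldsymbol{w})-((\nabla\cdot\boldsymbol{u})\boldsymbol{w},\boldsymbol{v})$, bounding the divergence remainder by $\|\nabla\cdot\boldsymbol{u}\|_{L^4}\|\boldsymbol{w}\|_{L^2}\|\boldsymbol{v}\|_{L^4}$ via $H^2\hookrightarrow W^{1,4}$. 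The case $\boldsymbol{v}\in H^2$ is handled analogously, with the roles of the $L^\infty$ and $L^4$ embeddings exchanged so that the gradient of $\boldsymbol{v}$ is placed in $L^4$.

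The final inequality is the delicate one. Here I would use integration by parts together with skew-symmetry to reduce $c(\boldsymbol{u};\boldsymbol{v},\boldsymbol{u})$ to the single integral $((\boldsymbol{u}\cdot\nabla)\boldsymbol{v},\boldsymbol{u})$ (up to a $\nabla\cdot\boldsymbol{u}$ remainder), and bound this by $\|\boldsymbol{u}\|_{L^4}^2\|\nabla\boldsymbol{v}\|_{L^2}$. The sharp scaling then comes not from a plain embedding but from the Ladyzhenskaya / Gagliardo--Nirenberg interpolation $\|\boldsymbol{u}\|_{L^4}\le C\|\boldsymbol{u}\|_0^{1/4}\|\boldsymbol{u}\|_1^{3/4}$, valid for $d=3$ and hence also $d=2$, which after squaring yields exactly $\|\boldsymbol{u}\|_0^{1/2}\|\boldsymbol{u}\|_1^{3/2}\|\boldsymbol{v}\|_1$.

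The main obstacle I anticipate is the bookkeeping of derivatives inside the skew-symmetric form: the second convective integral always places a gradient on whichever argument is only controlled in $L^2$, and the integration by parts needed to move it off produces a $\nabla\cdot\boldsymbol{u}$ term whose correct $L^p$ placement — and the verification that the corresponding Sobolev embedding is still subcritical at the critical dimension $d=3$ — is exactly where the argument is most fragile. For the last inequality the subtlety is instead purely the sharpness of the exponent pair $(1/2,3/2)$, which forces the interpolation inequality rather than a plain embedding; in the intended application $\boldsymbol{u}$ is (discretely) solenoidal, so the divergence remainder drops out and the clean bound results.
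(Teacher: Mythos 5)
Your overall strategy --- H\"older's inequality combined with the Sobolev embeddings $H^1\hookrightarrow L^4\cap L^6$, $H^2\hookrightarrow W^{1,4}\cap L^\infty$ valid for $d\le 3$, and integration by parts to move the derivative off whichever factor is only controlled in $L^2$ --- is exactly what the paper intends (its proof is a one-line citation of Temam ``using Sobolev and H\"older inequalities''), and your treatment of the first four bounds, including the $L^p$ placement of the $\nabla\cdot\boldsymbol{u}$ remainders, is correct.

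The genuine gap is in the final interpolated bound. After your reduction you are left with $c(\boldsymbol{u};\boldsymbol{v},\boldsymbol{u})=((\boldsymbol{u}\cdot\nabla)\boldsymbol{v},\boldsymbol{u})+\tfrac12((\nabla\cdot\boldsymbol{u})\boldsymbol{u},\boldsymbol{v})$, and you discard the second term by appealing to solenoidality of $\boldsymbol{u}$ ``in the intended application''. But the lemma asserts the bound for \emph{all} $\boldsymbol{u}\in H_0^1(\Omega)$, and the paper does invoke it with a non-solenoidal first argument: in Lemma~\ref{lem:convective2} the term $c(\widetilde{\boldsymbol{e}}_{u,h}^n,\widetilde{\boldsymbol{\eta}}_{u,h}^n+\widetilde{\boldsymbol{\eta}}_{u,t}^n,\widetilde{\boldsymbol{e}}_{u,h}^n)$ is estimated by precisely this inequality, and $\widetilde{\boldsymbol{e}}_{u,h}^n$ involves the unprojected intermediate velocity $\widetilde{\boldsymbol{u}}_h^n$, which is not (even discretely) divergence-free. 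Note also that the obvious fallback of bounding the remainder by $\|\nabla\cdot\boldsymbol{u}\|_{L^2}\|\boldsymbol{u}\|_{L^4}\|\boldsymbol{v}\|_{L^4}$ only yields $\|\boldsymbol{u}\|_0^{1/4}\|\boldsymbol{u}\|_1^{7/4}\|\boldsymbol{v}\|_1$, i.e.\ the wrong exponent pair. The repair is cheap but requires a different H\"older triple: estimate
\begin{align*}
|((\nabla\cdot\boldsymbol{u})\boldsymbol{u},\boldsymbol{v})|\le\|\nabla\cdot\boldsymbol{u}\|_{L^2}\,\|\boldsymbol{u}\|_{L^3}\,\|\boldsymbol{v}\|_{L^6}\le C\,\|\boldsymbol{u}\|_1\,\|\boldsymbol{u}\|_0^{1/2}\|\boldsymbol{u}\|_1^{1/2}\,\|\boldsymbol{v}\|_1,
\end{align*}
using the interpolation $\|\boldsymbol{u}\|_{L^3}\le C\|\boldsymbol{u}\|_0^{1/2}\|\boldsymbol{u}\|_1^{1/2}$ (exact at $d=3$ via $L^3=[L^2,L^6]_{1/2}$ and $H^1\hookrightarrow L^6$, a fortiori true for $d=2$). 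This reproduces $\|\boldsymbol{u}\|_0^{1/2}\|\boldsymbol{u}\|_1^{3/2}\|\boldsymbol{v}\|_1$ with no divergence constraint; the same $(3,2,6)$ split bounds $((\boldsymbol{u}\cdot\nabla)\boldsymbol{u},\boldsymbol{v})$ directly if you prefer to avoid the integration by parts for this term altogether.
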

\begin{proof}
\cite{temam1995navier} using Sobolev and H\"older inequalities.
\end{proof}

\begin{lemma} \label{lem:convectiveterms}
Consider solutions $(\boldsymbol{u}(t_n), p^n)\in \boldsymbol{V}^{div} \times Q$, $(\widetilde{\boldsymbol{u}}_h^n, p_h^n) \in \boldsymbol{V}\!_h^{\,div} \times Q_h$ of the continuous and fully discretized equations satisfying 
$\boldsymbol{u}(t_n)\in  [W^{1,\infty}(\Omega)]^d$ and $\widetilde{\boldsymbol{u}}_h^n\in  [L^{\infty}(\Omega)]^d$.
We can estimate the difference of the convective terms by means of 
$\boldsymbol{\eta}_u^n=\boldsymbol{u}(t_n)-j_u\boldsymbol{u}(t_n)$, 
$\boldsymbol{e}_u^n=j_u\boldsymbol{u}(t_n)-\boldsymbol{u}_h^n$
and 
$\widetilde{\boldsymbol{e}}_u^n=\boldsymbol{u}(t_n)-\widetilde{\boldsymbol{u}}_h^n$
as
\begin{align*}
&c(\boldsymbol{u}(t_n);\boldsymbol{u}(t_n), \widetilde{\boldsymbol{e}}_u^n) - c(\widetilde{\boldsymbol{u}}_h^n, \widetilde{\boldsymbol{u}}_h^n, \widetilde{\boldsymbol{e}}_u^n) 
\\& 
\le \Bigg[ |\boldsymbol{u}(t_n)|_{W^{1,\infty}} + C h^{2z} |\boldsymbol{u}(t_n)|_{W^{1,\infty}}^2 
+  C\frac{h^{2z}}{\gamma} | \boldsymbol{u}(t_n)|^2_{W^{1,\infty}} 
 \\&\qquad\quad
+ C \frac{h^{2z-2}}{\gamma}\|\boldsymbol{u}(t_n)\|_{\infty}^2 
+ C h^{2z-2} \|\widetilde{\boldsymbol{u}}_h^n\|_{\infty}^2\Bigg] 
\| \widetilde{\boldsymbol{e}}_u^n\|_0^2
\\&\quad
+C h^{-2z}\| \boldsymbol{\eta}_u^n\|_0^2  
+ 3 C h^{2-2z}||| \boldsymbol{\eta}_u^n|||^2_{LPS}  +   3 C h^{2-2z}||| \widetilde{\boldsymbol{e}}_u^n|||^2_{LPS}
\end{align*}
with $C$ independent of $h$, the problem parameters and the solutions and $z\in\{0,1\}$.
\end{lemma}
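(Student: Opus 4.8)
The plan is to collapse the difference of the two convective forms to a single trilinear term by the skew-symmetry of $c$, and then to replace the diffusive control one would normally invoke by the grad-div and LPS stabilizations, so as to keep every constant free of $1/\nu$. Writing $\widetilde{\boldsymbol{e}}_u^n=\boldsymbol{u}(t_n)-\widetilde{\boldsymbol{u}}_h^n$ and using trilinearity,
\begin{align*}
c(\boldsymbol{u}(t_n);\boldsymbol{u}(t_n),\widetilde{\boldsymbol{e}}_u^n)-c(\widetilde{\boldsymbol{u}}_h^n;\widetilde{\boldsymbol{u}}_h^n,\widetilde{\boldsymbol{e}}_u^n)
&= c(\widetilde{\boldsymbol{e}}_u^n;\boldsymbol{u}(t_n),\widetilde{\boldsymbol{e}}_u^n)+c(\widetilde{\boldsymbol{u}}_h^n;\widetilde{\boldsymbol{e}}_u^n,\widetilde{\boldsymbol{e}}_u^n),
\end{align*}
and the last term vanishes because $c(\boldsymbol{w};\boldsymbol{v},\boldsymbol{v})=0$ for the skew-symmetric form. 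The surviving term $c(\widetilde{\boldsymbol{e}}_u^n;\boldsymbol{u}(t_n),\widetilde{\boldsymbol{e}}_u^n)$ carries the smooth solution $\boldsymbol{u}(t_n)$ in its middle (differentiated) slot, which is essential: I would deliberately not use the Sobolev--H\"older estimates of Lemma~\ref{lemma:convectivebound1}, since those involve $H^2$-norms and would reintroduce $1/\nu$, and instead keep all control in the $L^\infty$- and $W^{1,\infty}$-norms of $\boldsymbol{u}(t_n)$ together with the two stabilizations.

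Writing $c(\widetilde{\boldsymbol{e}}_u^n;\boldsymbol{u}(t_n),\widetilde{\boldsymbol{e}}_u^n)=\tfrac12((\widetilde{\boldsymbol{e}}_u^n\cdot\nabla)\boldsymbol{u}(t_n),\widetilde{\boldsymbol{e}}_u^n)-\tfrac12((\widetilde{\boldsymbol{e}}_u^n\cdot\nabla)\widetilde{\boldsymbol{e}}_u^n,\boldsymbol{u}(t_n))$, the first half is bounded at once by $|\boldsymbol{u}(t_n)|_{W^{1,\infty}}\|\widetilde{\boldsymbol{e}}_u^n\|_0^2$, which is the leading bracket term. In the second half the derivative lands on the non-smooth error, so I would split $\widetilde{\boldsymbol{e}}_u^n=\boldsymbol{\eta}_u^n+(j_u\boldsymbol{u}(t_n)-\widetilde{\boldsymbol{u}}_h^n)$, treating the interpolation part through the approximation bounds of Assumption~\ref{assumption-A.3} (whence the $\|\boldsymbol{\eta}_u^n\|_0$ contribution) and the fully discrete part through the local inverse inequality of Assumption~\ref{assumption-A.2}. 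The contribution in which a divergence factor $\nabla\cdot\widetilde{\boldsymbol{e}}_u^n$ is produced is closed by Young's inequality against the grad-div term, leaving a $\gamma^{-1}\|\boldsymbol{u}(t_n)\|_\infty^2$ weight; the contribution retaining a genuine streamline derivative is routed to the LPS seminorm.

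For that streamline part I would pass to the element-wise averaged direction $\boldsymbol{u}_M$, controlling the averaging defect by Definition~\ref{def:streamline-direction}, i.e.\ $\|\boldsymbol{u}(t_n)-\boldsymbol{u}_M\|_{L^\infty(M)}\le Ch|\boldsymbol{u}(t_n)|_{W^{1,\infty}(M)}$, which produces the $h^{2z}|\boldsymbol{u}(t_n)|_{W^{1,\infty}}^2$ and $\gamma^{-1}h^{2z}|\boldsymbol{u}(t_n)|_{W^{1,\infty}}^2$ weights, and then insert $id=\kappa_M+\pi_M$. The $\pi_M$-part is absorbed using the $L^2$-stability of the projection and the $L^\infty$-bounds (this is the origin of the $h^{2z-2}\|\widetilde{\boldsymbol{u}}_h^n\|_\infty^2$ term), while the $\kappa_M$-part is precisely the streamline-fluctuation contribution to the LPS seminorm and, after the interpolation split and Assumption~\ref{assumption-A.4}, yields $|||\boldsymbol{\eta}_u^n|||^2_{LPS}$ and $|||\widetilde{\boldsymbol{e}}_u^n|||^2_{LPS}$. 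Whether one applies the inverse inequality of Assumption~\ref{assumption-A.2} (trading $\|\nabla(\cdot)\|_0$ for $h^{-1}\|\cdot\|_0$ on the discrete pieces) or not is exactly the dichotomy recorded by $z\in\{0,1\}$, and it accounts for the conjugate powers $h^{-2z}$, $h^{2-2z}$, $h^{2z}$ and $h^{2z-2}$ appearing throughout.

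The delicate point, as in every quasi-robust convective estimate, is that the gradient of the error entering the convecting structure must be absorbed entirely by grad-div and LPS rather than by the possibly tiny diffusion; keeping the $h$-powers consistent across the averaging, fluctuation, and inverse-inequality steps so that each leftover coefficient is either free of $1/\nu$ or carried by $\gamma$ and $\tau_M$ (for both choices of $z$) is the main bookkeeping burden. I expect the sharpest obstacle to be the term with the discrete velocity in the convecting slot, whose control forces $\|\widetilde{\boldsymbol{u}}_h^n\|_\infty$ onto the right-hand side; this is a discrete rather than purely solution-dependent quantity, harmless for the present estimate but, as the subsequent remark observes, the precise reason the resulting bound is not a genuine a priori estimate.
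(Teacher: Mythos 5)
Your opening reduction is where the argument breaks down. The cancellation $c(\widetilde{\boldsymbol{u}}_h^n;\widetilde{\boldsymbol{e}}_u^n,\widetilde{\boldsymbol{e}}_u^n)=0$ disposes of the second term of the trilinear expansion only if the function tested with, $\widetilde{\boldsymbol{e}}_u^n$, coincides with the full velocity error $\boldsymbol{u}(t_n)-\widetilde{\boldsymbol{u}}_h^n$. That is not the situation in which the lemma is used: in Lemma~\ref{lem:errn2} the test function is the \emph{discrete} part $\widetilde{\boldsymbol{e}}_u^n=\boldsymbol{w}_h^n-\widetilde{\boldsymbol{u}}_h^n$ of the error, the full error being $\boldsymbol{\eta}_u^n+\widetilde{\boldsymbol{e}}_u^n$ (the definition $\widetilde{\boldsymbol{e}}_u^n=\boldsymbol{u}(t_n)-\widetilde{\boldsymbol{u}}_h^n$ displayed in the statement is a slip; the right-hand side, with its separate $\|\boldsymbol{\eta}_u^n\|_0^2$- and $\|\widetilde{\boldsymbol{e}}_u^n\|_0^2$-contributions carrying the conjugate weights $h^{-2z}$ and $h^{2z-2}$, only makes sense for the discrete error). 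With that reading the second term of your expansion is $c(\widetilde{\boldsymbol{u}}_h^n;\boldsymbol{\eta}_u^n+\widetilde{\boldsymbol{e}}_u^n,\widetilde{\boldsymbol{e}}_u^n)=c(\widetilde{\boldsymbol{u}}_h^n;\boldsymbol{\eta}_u^n,\widetilde{\boldsymbol{e}}_u^n)$, which does \emph{not} vanish and is precisely the most delicate contribution: after integration by parts it yields $-(\widetilde{\boldsymbol{u}}_h^n\cdot\nabla\widetilde{\boldsymbol{e}}_u^n,\boldsymbol{\eta}_u^n)$, which the paper controls with the local inverse inequality (Assumption~\ref{assumption-A.2}) applied to the \emph{discrete} function $\widetilde{\boldsymbol{e}}_u^n$ --- the sole source of the $h^{2z-2}\|\widetilde{\boldsymbol{u}}_h^n\|_{\infty}^2$ weight --- together with a companion divergence term closed against the grad-div stabilization. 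Your plan does mention the inverse inequality on ``the fully discrete part,'' but after your cancellation there is no term left to apply it to; the paper instead keeps the three-term splitting $T_1=((\boldsymbol{u}(t_n)-\widetilde{\boldsymbol{u}}_h^n)\cdot\nabla\boldsymbol{u}(t_n),\widetilde{\boldsymbol{e}}_u^n)$, $T_2=(\widetilde{\boldsymbol{u}}_h^n\cdot\nabla\boldsymbol{\eta}_u^n,\widetilde{\boldsymbol{e}}_u^n)$, $T_3=((\nabla\cdot\widetilde{\boldsymbol{u}}_h^n)j_u\boldsymbol{u}(t_n),\widetilde{\boldsymbol{e}}_u^n)$ throughout.

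A second, independent mismatch concerns the origin of the $|||\cdot|||_{LPS}$ terms. You route them through the streamline machinery ($\boldsymbol{u}_M$, $\kappa_M=id-\pi_M$, Assumption~\ref{assumption-A.4}); in the paper's proof the SU fluctuation term is never touched, and every $|||\cdot|||^2_{LPS}$ on the right-hand side arises from the grad-div component alone, via $\gamma\|\nabla\cdot(\cdot)\|_0^2\le|||\cdot|||^2_{LPS}$ applied to divergence factors such as $\nabla\cdot\widetilde{\boldsymbol{e}}_u^n$ and $\nabla\cdot\boldsymbol{\eta}_u^n$, with the $h$-powers supplied by $\|\boldsymbol{\eta}_u^n\|_{\infty,M}\le Ch_M|\boldsymbol{u}(t_n)|_{W^{1,\infty}(M)}$ and the free weight in Young's inequality. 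Absorbing a streamline derivative of the error into the SU term would implicitly require a lower bound on $\tau_M^n$, which the paper explicitly avoids (the estimate must survive $\tau_M^n=0$), so that route cannot deliver the stated bound. Your instinct to avoid Lemma~\ref{lemma:convectivebound1} and to pay only in $L^\infty$/$W^{1,\infty}$ norms of $\boldsymbol{u}(t_n)$ plus grad-div is correct and matches the paper; the decomposition feeding that strategy is what needs repair.
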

\begin{proof} 
We remember that the Stokes interpolation operator $j_u\colon \boldsymbol{V}\to \boldsymbol{V}\!_h$ is discretely divergence-preserving.
With the splitting $\boldsymbol{\eta}_u^n+\widetilde{\boldsymbol{e}}_u^n= (\boldsymbol{u}(t_n)-j_u \boldsymbol{u}(t_n)) + (j_u \boldsymbol{u}(t_n)-\widetilde{\boldsymbol{u}}_h^n)$ and integration by parts, we have
\begin{align*}
 &c(\boldsymbol{u}(t_n);\boldsymbol{u}(t_n), \widetilde{\boldsymbol{e}}_u^n) - c(\widetilde{\boldsymbol{u}}_h^n;\widetilde{\boldsymbol{u}}_h^n, \widetilde{\boldsymbol{e}}_u^n)  \\
& = \underbrace{((\boldsymbol{u}(t_n)-\widetilde{\boldsymbol{u}}_h^n) \cdot \nabla \boldsymbol{u}(t_n), \widetilde{\boldsymbol{e}}_u^n)}_{=: T_1} + 
\underbrace{(\widetilde{\boldsymbol{u}}_h^n \cdot \nabla (\boldsymbol{u}(t_n)-j_u\boldsymbol{u}(t_n)), \widetilde{\boldsymbol{e}}_u^n)}_{=: T_2}
\\&\quad
- \frac12 \underbrace{((\nabla \cdot \widetilde{\boldsymbol{u}}_h^n)j_u\boldsymbol{u}(t_n), \widetilde{\boldsymbol{e}}_u^n)}_{=: T_3}.
\end{align*}
Now, we bound each term separately. Using Young's inequality with $C >0$, we calculate:
\begin{align}
\nonumber
T_1  & \le \sum_{M\in\mathcal M_h} \| \nabla \boldsymbol{u}(t_n)\|_{\infty,M} \Big( \| \widetilde{\boldsymbol{e}}_u^n\|_{0,M}^2
+    \| \boldsymbol{\eta}_u^n\|_{0,M}  \| \widetilde{\boldsymbol{e}}_u^n\|_{0,M} \Big) \\
\label{eqn:term-T1}
& \le |\boldsymbol{u}(t_n)|_{W^{1,\infty}} \| \widetilde{\boldsymbol{e}}_u^n\|_0^2
+  \sum_{M\in\mathcal M_h} | \boldsymbol{u}(t_n)|_{W^{1,\infty}(M)} \| \boldsymbol{\eta}_u^n\|_{0,M} \|\widetilde{\boldsymbol{e}}_u^n\|_{0,M} \\
& \le \frac{1}{4C} h^{-2z}\| \boldsymbol{\eta}_u^n \|_0^2 + 
\Big( |\boldsymbol{u}(t_n)|_{W^{1,\infty}} + C  h^{2z}|\boldsymbol{u}(t_n)|_{W^{1,\infty}}^2  \Big)   \| \widetilde{\boldsymbol{e}}_u^n\|_0^2.
\nonumber
\end{align}
For the term $T_2$, we have via integration by parts
\begin{align*}
T_2  &=   (\widetilde{\boldsymbol{u}}_h^n \cdot \nabla \boldsymbol{\eta}_u^n, \widetilde{\boldsymbol{e}}_u^n)
=  - (\widetilde{\boldsymbol{u}}_h^n \cdot \nabla \widetilde{\boldsymbol{e}}_u^n, \boldsymbol{\eta}_u^n) - ((\nabla \cdot \widetilde{\boldsymbol{u}}_h^n)\widetilde{\boldsymbol{e}}_u^n, 
\boldsymbol{\eta}_u^n) =: T_{21}+T_{22}.
\end{align*}
Term $T_{21}$ is the most critical one.
We calculate using the local inverse inequality (Assumption~\ref{assumption-A.2}) and Young's inequality:
\begin{align}
\begin{aligned}
T_{21} & = - (\widetilde{\boldsymbol{u}}_h^n \cdot \nabla \widetilde{\boldsymbol{e}}_u^n, \boldsymbol{\eta}_u^n) 
\le \sum_{M\in\mathcal M_h} \|\widetilde{\boldsymbol{u}}_h^n\|_{\infty,M} \| \nabla \widetilde{\boldsymbol{e}}_u^n\|_{0,M} \| \boldsymbol{\eta}_u^n\|_{0,M} \\ 
&\le C \sum_{M\in\mathcal M_h} \|\widetilde{\boldsymbol{u}}_h^n\|_{\infty,M} \| \widetilde{\boldsymbol{e}}_u^n\|_{0,M}h_M^{-1}\| \boldsymbol{\eta}_u^n\|_{0,M} 
 \\ &
\le  C h^{2z-2}\|\widetilde{\boldsymbol{u}}_h^n\|_{\infty}^2 \| \widetilde{\boldsymbol{e}}_u^n\|_{0}^2  
+ C h^{-2z}\| \boldsymbol{\eta}_u^n\|_0^2.
\label{eqn:term-T21-ohneA5}
\end{aligned}
\end{align}
Using $(\nabla \cdot \boldsymbol{u}(t_n), q)=0$ for all $q\in L^2(\Omega)$ and Young's inequality with $C >0$, we obtain 
\begin{align}
 \nonumber
T_{22} & =  - ((\nabla \cdot \widetilde{\boldsymbol{u}}_h^n)\boldsymbol{\eta}_u^n, \widetilde{\boldsymbol{e}}_u^n)
=  ((\nabla \cdot (\boldsymbol{\eta}_u^n + \widetilde{\boldsymbol{e}}_u^n))\boldsymbol{\eta}_u^n, \widetilde{\boldsymbol{e}}_u^n)
\\&\nonumber
\le \sum_{M\in\mathcal M_h} \| \boldsymbol{\eta}_u^n\|_{\infty,M} \Big( \| \nabla \cdot \widetilde{\boldsymbol{e}}_u^n\|_{0,M} + 
\|\nabla \cdot \boldsymbol{\eta}_u^n \|_{0,M} \Big) \| \widetilde{\boldsymbol{e}}_u^n\|_{0,M} \\
\label{eqn:term-T22}
& \le \sum_{M\in\mathcal M_h} \frac{Ch_M}{\sqrt{\gamma}}|\boldsymbol{u}(t_n)|_{W^{1,\infty}(M)}\sqrt{\gamma}
\Big( \|\nabla \cdot \widetilde{\boldsymbol{e}}_u^n\|_{0,M} + \| \nabla \cdot \boldsymbol{\eta}_u^n\|_{0,M} \Big) \| \widetilde{\boldsymbol{e}}_u^n\|_{0,M} \\
 \nonumber
& \le C h^{2-2z}||| \boldsymbol{\eta}_u^n|||^2_{LPS} + C h^{2-2z}||| \widetilde{\boldsymbol{e}}_u^n|||^2_{LPS}
+  C\frac{h^{2z}}{\gamma} | \boldsymbol{u}(t_n)|^2_{W^{1,\infty}}  \| \widetilde{\boldsymbol{e}}_u^n\|_0^2.
\end{align}
For $T_3$ we have the splitting
\begin{align*}
 T_3  =   ((\nabla \cdot \widetilde{\boldsymbol{u}}_h^n)j_u\boldsymbol{u}(t_n), \widetilde{\boldsymbol{e}}_u^n) 
=   -((\nabla \cdot \widetilde{\boldsymbol{u}}_h^n)\boldsymbol{\eta}_u^n, \widetilde{\boldsymbol{e}}_u^n)
+((\nabla \cdot \widetilde{\boldsymbol{u}}_h^n) \boldsymbol{u}(t_n), \widetilde{\boldsymbol{e}}_u^n) = T_{22} + T_{32}
\end{align*}
and use the same estimate as in (\ref{eqn:term-T22}).\\
Utilizing $(\nabla \cdot \boldsymbol{u}(t_n), q)=0$ for all $q\in L^2(\Omega)$ and Young's inequality we obtain
\begin{align}
\begin{aligned}
|T_{32}| & =  |(\nabla \cdot \widetilde{\boldsymbol{u}}_h^n, \boldsymbol{u} \cdot \widetilde{\boldsymbol{e}}_u^n)|
= | (\nabla \cdot (-\boldsymbol{\eta}_u^n - \widetilde{\boldsymbol{e}}_u^n + \boldsymbol{u}(t_n)), \boldsymbol{u}(t_n) \cdot \widetilde{\boldsymbol{e}}_u^n)|\\
&\le  | (\nabla \cdot \boldsymbol{\eta}_u^n, \boldsymbol{u}(t_n) \cdot \widetilde{\boldsymbol{e}}_u^n) | +|(\nabla \cdot \widetilde{\boldsymbol{e}}_u^n, \boldsymbol{u}(t_n) \cdot \widetilde{\boldsymbol{e}}_u^n)|\\
&\le   \sum_{M\in\mathcal M_h} \Big( \|\boldsymbol{u}(t_n)\|_{\infty,M} \sqrt{\gamma}\|\nabla \cdot \boldsymbol{\eta}_u^n\|_{0,M} \frac{1}{\sqrt{\gamma}} \|\widetilde{\boldsymbol{e}}_u^n\|_{0,M} 
 \\&\qquad\qquad\quad 
+ \|\boldsymbol{u}(t_n)\|_{\infty,M} \sqrt{\gamma}\|\nabla \cdot \widetilde{\boldsymbol{e}}_u^n\|_{0,M}  \frac{1}{\sqrt{\gamma}}\|\widetilde{\boldsymbol{e}}_u^n\|_{0,M} \Big)
\label{eqn:term-T32}
\\&
\le C h^{2-2z}||| \boldsymbol{\eta}_u^n|||_{LPS}^2 
+ C h^{2-2z}||| \widetilde{\boldsymbol{e}}_u^n|||_{LPS}^2 
+ C \frac{h^{2z-2}}{\gamma}\|\boldsymbol{u}(t_n)\|_{\infty}^2  \|\widetilde{\boldsymbol{e}}_u^n\|_0^2.
\end{aligned}
\end{align}
Combining the above bounds (\ref{eqn:term-T1})-(\ref{eqn:term-T32}) yields the claim.
\end{proof}

\begin{lemma}\label{lem:convective2}
 The error with respect to the convective terms can be estimated as
   \begin{align*}
 &c(\boldsymbol{u}(t_n), \boldsymbol{u}(t_n), \widetilde{\boldsymbol{e}}_{u,h}^n)- 
c(\widetilde{\boldsymbol{u}}_h^n, \widetilde{\boldsymbol{u}}_h^n, \widetilde{\boldsymbol{e}}_{u,h}^n) \\
&\leq 
C\|\widetilde{\boldsymbol{e}}_{u,h}^n\|_0^2\left(\frac{\|\widetilde{\boldsymbol{\eta}}_{u,t}^n\|_1^4+\|\widetilde{\boldsymbol{\eta}}_{u,h}^n\|_1^4}{\nu^3}+\frac{\|\boldsymbol{u}(t_n)\|_2^2}{\nu}\right)+\frac{\nu}{4}\|\widetilde{\boldsymbol{e}}_{u,h}^n\|_1^2 
 \\&\quad
+\frac{C}{\nu}(\|\widetilde{\boldsymbol{\eta}}_{u,t}^n\|_1^4+\|\widetilde{\boldsymbol{\eta}}_{u,h}^n\|_1^4+\|\widetilde{\boldsymbol{\eta}}_{u,h}^n\|_0^2\|\boldsymbol{u}(t_n)\|_2^2)
\end{align*}
 with $\widetilde{\boldsymbol{e}}_{u,h}^n$, $\widetilde{\boldsymbol{\eta}}_{u,h}^n$ and $\widetilde{\boldsymbol{\eta}}_{u,t}^n$
 as defined in (\ref{eqn:error_semi2t}), (\ref{eqn:error_semi2h_eta}) and (\ref{eqn:error_semi2h_e}).
\end{lemma}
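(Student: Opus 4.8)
The plan is to collapse the nonlinear difference into trilinear forms that are affine in a single error component and then to feed each piece into the tailored bounds of Lemma~\ref{lemma:convectivebound1}. Abbreviate $\boldsymbol{e}:=\widetilde{\boldsymbol{e}}_{u,h}^n$ and $\boldsymbol{\chi}:=\widetilde{\boldsymbol{\eta}}_{u,t}^n+\widetilde{\boldsymbol{\eta}}_{u,h}^n$, so that $\boldsymbol{u}(t_n)-\widetilde{\boldsymbol{u}}_h^n=\boldsymbol{\chi}+\boldsymbol{e}$. First I would use bilinearity of $c$ in its first two slots, then the skew-symmetry $c(\boldsymbol{w},\boldsymbol{v},\boldsymbol{v})=0$ to discard $c(\widetilde{\boldsymbol{u}}_h^n,\boldsymbol{e},\boldsymbol{e})$, and finally substitute $\widetilde{\boldsymbol{u}}_h^n=\boldsymbol{u}(t_n)-\boldsymbol{\chi}-\boldsymbol{e}$ in the surviving cross term. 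Using once more $c(\boldsymbol{\chi},\boldsymbol{e},\boldsymbol{e})=0$ this produces the identity
\begin{align*}
&c(\boldsymbol{u}(t_n),\boldsymbol{u}(t_n),\boldsymbol{e})-c(\widetilde{\boldsymbol{u}}_h^n,\widetilde{\boldsymbol{u}}_h^n,\boldsymbol{e})\\
&\quad=\underbrace{c(\boldsymbol{\chi},\boldsymbol{u}(t_n),\boldsymbol{e})+c(\boldsymbol{u}(t_n),\boldsymbol{\chi},\boldsymbol{e})}_{=:P}+c(\boldsymbol{e},\boldsymbol{u}(t_n),\boldsymbol{e})-c(\boldsymbol{\chi},\boldsymbol{\chi},\boldsymbol{e})-c(\boldsymbol{e},\boldsymbol{\chi},\boldsymbol{e}).
\end{align*}
Every term is now linear in $\boldsymbol{\chi}$, quadratic in $\boldsymbol{\chi}$, or quadratic in $\boldsymbol{e}$, which is exactly the structure mirrored by the three groups on the right-hand side of the claim.

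Next I would dispatch the three ``diagonal'' terms, which carry the bulk of the estimate. For $c(\boldsymbol{e},\boldsymbol{u}(t_n),\boldsymbol{e})$ the line of Lemma~\ref{lemma:convectivebound1} placing $H^2$-regularity on the continuous velocity gives $C\|\boldsymbol{e}\|_1\|\boldsymbol{u}(t_n)\|_2\|\boldsymbol{e}\|_0$, and Young's inequality yields $\tfrac{\nu}{8}\|\boldsymbol{e}\|_1^2+\tfrac{C}{\nu}\|\boldsymbol{u}(t_n)\|_2^2\|\boldsymbol{e}\|_0^2$, i.e. the $\nu^{-1}\|\boldsymbol{u}(t_n)\|_2^2$ coefficient of $\|\boldsymbol{e}\|_0^2$. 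For $c(\boldsymbol{\chi},\boldsymbol{\chi},\boldsymbol{e})$ the pure $H^1$ line gives $C\|\boldsymbol{\chi}\|_1^2\|\boldsymbol{e}\|_1$, whence Young leaves $\tfrac{\nu}{8}\|\boldsymbol{e}\|_1^2+\tfrac{C}{\nu}\|\boldsymbol{\chi}\|_1^4$, the free fourth-order term. For $c(\boldsymbol{e},\boldsymbol{\chi},\boldsymbol{e})$ I would apply the special estimate $c(\boldsymbol{u},\boldsymbol{v},\boldsymbol{u})\le C\|\boldsymbol{u}\|_0^{1/2}\|\boldsymbol{u}\|_1^{3/2}\|\boldsymbol{v}\|_1$ of Lemma~\ref{lemma:convectivebound1} with $\boldsymbol{u}=\boldsymbol{e}$, $\boldsymbol{v}=\boldsymbol{\chi}$, obtaining $C\|\boldsymbol{e}\|_0^{1/2}\|\boldsymbol{e}\|_1^{3/2}\|\boldsymbol{\chi}\|_1$, and then Young with conjugate exponents $4/3$ and $4$ to split off $\tfrac{\nu}{8}\|\boldsymbol{e}\|_1^2$ and leave $\tfrac{C}{\nu^3}\|\boldsymbol{e}\|_0^2\|\boldsymbol{\chi}\|_1^4$. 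Splitting $\|\boldsymbol{\chi}\|_1^4\le C(\|\widetilde{\boldsymbol{\eta}}_{u,t}^n\|_1^4+\|\widetilde{\boldsymbol{\eta}}_{u,h}^n\|_1^4)$ then reproduces the $\nu^{-1}$ and $\nu^{-3}$ fourth-order contributions.

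Finally I would treat the cross term $P$, where the regularity of $\boldsymbol{u}(t_n)$ must be spent most carefully. In $c(\boldsymbol{u}(t_n),\boldsymbol{\chi},\boldsymbol{e})$ I would use the line that carries $H^2$ on the first argument and $L^2$ on the middle one, giving $C\|\boldsymbol{u}(t_n)\|_2\|\boldsymbol{\chi}\|_0\|\boldsymbol{e}\|_1$; after Young this leaves $\tfrac{\nu}{8}\|\boldsymbol{e}\|_1^2+\tfrac{C}{\nu}\|\boldsymbol{u}(t_n)\|_2^2\|\boldsymbol{\chi}\|_0^2$. It is precisely here that the spatial interpolation error $\widetilde{\boldsymbol{\eta}}_{u,h}^n$ enters only through its $L^2$-norm, so that its approximation property makes $\|\widetilde{\boldsymbol{\eta}}_{u,h}^n\|_0^2\|\boldsymbol{u}(t_n)\|_2^2$ genuinely higher order --- the last free term in the claim. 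The companion $c(\boldsymbol{\chi},\boldsymbol{u}(t_n),\boldsymbol{e})$ I would rewrite by skew-symmetry and bound with an $H^2$-line as well, so that $\boldsymbol{e}$ is again carried in $H^1$ and the interpolation factors are folded into the fourth-order terms already generated.

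The hard part will be the bookkeeping inside $P$: since $P$ is only linear in $\boldsymbol{\chi}$, the two constituents $\widetilde{\boldsymbol{\eta}}_{u,t}^n$ and $\widetilde{\boldsymbol{\eta}}_{u,h}^n$ must be routed through different norms, the spatial interpolation error through $L^2$ (so its smallness can be played against $\|\boldsymbol{u}(t_n)\|_2$), while the time-discretization error is kept in $H^1$ and, using the a~priori bound $\|\boldsymbol{u}(t_n)\|_2\le C$ together with Young's inequality, controlled by the fourth-order quantities $\|\widetilde{\boldsymbol{\eta}}_{u,t}^n\|_1^4/\nu$ rather than at second order. Reconciling this linear cross term with the asymmetric fourth-order form of the stated right-hand side is the genuinely delicate step; the other difficulty is the quadratic-in-$\boldsymbol{e}$ term $c(\boldsymbol{e},\boldsymbol{\chi},\boldsymbol{e})$, where retaining only a single power of $\|\boldsymbol{e}\|_1$ for absorption by the viscous term forces the fractional interpolation estimate and the $4/3$--$4$ Young step responsible for the characteristic $\nu^{-3}$ weight.
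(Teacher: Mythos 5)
Your decomposition is exactly the paper's: after using skew-symmetry and writing $\boldsymbol{u}(t_n)-\widetilde{\boldsymbol{u}}_h^n=\boldsymbol{\chi}+\boldsymbol{e}$ with $\boldsymbol{\chi}=\widetilde{\boldsymbol{\eta}}_{u,t}^n+\widetilde{\boldsymbol{\eta}}_{u,h}^n$, the paper arrives at the same five trilinear terms, and it bounds the three quadratic ones precisely as you do: $c(\boldsymbol{e},\boldsymbol{u}(t_n),\boldsymbol{e})$ by the $\|\cdot\|_0\|\cdot\|_2\|\cdot\|_1$ line of Lemma~\ref{lemma:convectivebound1}, $c(\boldsymbol{\chi},\boldsymbol{\chi},\boldsymbol{e})$ by $\|\boldsymbol{\chi}\|_1^2\|\boldsymbol{e}\|_1$, and $c(\boldsymbol{e},\boldsymbol{\chi},\boldsymbol{e})$ by the fractional estimate followed by Young with exponents $4$ and $4/3$, which is indeed where the $\nu^{-3}$ weight originates. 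Up to that point the proposal is correct and identical in substance to the paper.

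The step that does not go through is your treatment of the time-discretization part of the linear cross terms $P$. If you keep $\widetilde{\boldsymbol{\eta}}_{u,t}^n$ in $H^1$, any bound of the form $C\|\boldsymbol{u}(t_n)\|_2\|\widetilde{\boldsymbol{\eta}}_{u,t}^n\|_1\|\boldsymbol{e}\|_1$ leaves, after Young, a term of size $\nu^{-1}\|\widetilde{\boldsymbol{\eta}}_{u,t}^n\|_1^2$; this second power cannot be ``folded into'' the fourth-order quantity $\nu^{-1}\|\widetilde{\boldsymbol{\eta}}_{u,t}^n\|_1^4$, since $a^2\le Ca^4$ fails exactly when $a$ is small, which is the regime of interest. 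The paper avoids the asymmetry altogether: it estimates both cross terms with the whole of $\boldsymbol{\chi}$ measured in $L^2$, namely $c(\boldsymbol{\chi},\boldsymbol{u}(t_n),\boldsymbol{e})+c(\boldsymbol{u}(t_n),\boldsymbol{\chi},\boldsymbol{e})\le \frac{C}{\nu}\|\boldsymbol{\chi}\|_0^2\|\boldsymbol{u}(t_n)\|_2^2+\frac{\nu}{16}\|\boldsymbol{e}\|_1^2$, so the temporal error also enters only through its $L^2$ norm. (To be fair, the lemma as stated lists only $\|\widetilde{\boldsymbol{\eta}}_{u,h}^n\|_0^2\|\boldsymbol{u}(t_n)\|_2^2$, so the $\|\widetilde{\boldsymbol{\eta}}_{u,t}^n\|_0^2\|\boldsymbol{u}(t_n)\|_2^2$ contribution is silently dropped between the paper's intermediate bound and its conclusion; but the intermediate bound does carry the full $\|\boldsymbol{\chi}\|_0^2$.) The fix for your argument is simply to route $\widetilde{\boldsymbol{\eta}}_{u,t}^n$ in $P$ through the same $L^2$ estimate you already use for $\widetilde{\boldsymbol{\eta}}_{u,h}^n$, rather than through $H^1$.
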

\begin{proof}
 Due to $\widetilde{\boldsymbol{\eta}}_{u,t}^n+\widetilde{\boldsymbol{\xi}}_{u,h}^n= \boldsymbol{u}(t_n)- \widetilde{\boldsymbol{u}}_h^n$ 
we calculate for the convective term using skew-symmetry and the estimates from Lemma~\ref{lemma:convectivebound1}
\begin{align}
 \begin{aligned}
&c(\boldsymbol{u}(t_n), \boldsymbol{u}(t_n), \widetilde{\boldsymbol{e}}_{u,h}^n)- 
c(\widetilde{\boldsymbol{u}}_h^n, \widetilde{\boldsymbol{u}}_h^n, \widetilde{\boldsymbol{e}}_{u,h}^n) 
\\&
= 
c(\widetilde{\boldsymbol{\xi}}_{u,h}^n, \boldsymbol{u}(t_n), \widetilde{
\boldsymbol{e}}_{u,h}^n) + 
c(\widetilde{\boldsymbol{u}}_h^n, \widetilde{\boldsymbol{\xi}}_{u,h}^n, \widetilde{
\boldsymbol{e}}_{u,h}^n)+
c(\widetilde{\boldsymbol{\eta}}_{u,t}^n, \boldsymbol{u}(t_n), \widetilde{
\boldsymbol{e}}_{u,h}^n) + 
c(\widetilde{\boldsymbol{u}}_h^n, \widetilde{\boldsymbol{\eta}}_{u,t}^n, \widetilde{
\boldsymbol{e}}_{u,h}^n)
\\&
=c(\widetilde{\boldsymbol{\xi}}_{u,h}^n, \boldsymbol{u}(t_n), \widetilde{
\boldsymbol{e}}_{u,h}^n) + 
c(\boldsymbol{u}(t_n)-\widetilde{\boldsymbol{\eta}}_{u,t}^n-\widetilde{\boldsymbol{\xi}}_{u,h}^n, \widetilde{\boldsymbol{\eta}}_{u,h}^n, \widetilde{
\boldsymbol{e}}_{u,h}^n)\\&\quad+
c(\widetilde{\boldsymbol{\eta}}_{u,t}^n, \boldsymbol{u}(t_n), \widetilde{
\boldsymbol{e}}_{u,h}^n) + 
c(\boldsymbol{u}(t_n)-\widetilde{\boldsymbol{\eta}}_{u,t}^n-\widetilde{\boldsymbol{\xi}}_{u,h}^n, \widetilde{\boldsymbol{\eta}}_{u,t}^n, \widetilde{
\boldsymbol{e}}_{u,h}^n)
\\&
=c(\widetilde{\boldsymbol{\eta}}_{u,h}^n+\widetilde{\boldsymbol{\eta}}_{u,t}^n, \boldsymbol{u}(t_n), \widetilde{
\boldsymbol{e}}_{u,h}^n) 
+c(\boldsymbol{u}(t_n), \widetilde{\boldsymbol{\eta}}_{u,h}^n+\widetilde{\boldsymbol{\eta}}_{u,t}^n, \widetilde{
\boldsymbol{e}}_{u,h}^n)
+c(\widetilde{\boldsymbol{e}}_{u,h}^n, \boldsymbol{u}(t_n), \widetilde{
\boldsymbol{e}}_{u,h}^n) 
\\&\quad
- c(\widetilde{\boldsymbol{\eta}}_{u,t}^n+\widetilde{\boldsymbol{\eta}}_{u,h}^n, \widetilde{\boldsymbol{\eta}}_{u,t}^n+\widetilde{\boldsymbol{\eta}}_{u,h}^n, \widetilde{
\boldsymbol{e}}_{u,h}^n)
+c(\widetilde{\boldsymbol{e}}_{u,h}^n, \widetilde{\boldsymbol{\eta}}_{u,h}^n+\widetilde{\boldsymbol{\eta}}_{u,t}^n, \widetilde{
\boldsymbol{e}}_{u,h}^n).
\end{aligned}
\end{align}

These terms can be estimated as
\begin{align*}
 c(\widetilde{\boldsymbol{\eta}}_{u,h}^n+\widetilde{\boldsymbol{\eta}}_{u,t}^n, \boldsymbol{u}(t_n), \widetilde{
\boldsymbol{e}}_{u,h}^n)
&\leq C\|\widetilde{\boldsymbol{\eta}}_{u,h}^n+\widetilde{\boldsymbol{\eta}}_{u,t}^n\|_0\|\boldsymbol{u}(t_n)\|_2\|\widetilde{
\boldsymbol{e}}_{u,h}^n\|_1
\\&
\leq \frac{C}{\nu}\|\widetilde{\boldsymbol{\eta}}_{u,h}^n+\widetilde{\boldsymbol{\eta}}_{u,t}^n\|_0^2\|\boldsymbol{u}(t_n)\|_2^2+\frac{\nu}{32}\|\widetilde{
\boldsymbol{e}}_{u,h}^n\|_1^2
 \\
c(\boldsymbol{u}(t_n), \widetilde{\boldsymbol{\eta}}_{u,h}^n+\widetilde{\boldsymbol{\eta}}_{u,t}^n, \widetilde{
\boldsymbol{e}}_{u,h}^n)
&\leq C\|\widetilde{\boldsymbol{\eta}}_{u,h}^n+\widetilde{\boldsymbol{\eta}}_{u,t}^n\|_0\|\boldsymbol{u}(t_n)\|_2\|\widetilde{
\boldsymbol{e}}_{u,h}^n\|_1 
\\&
\leq \frac{C}{\nu}\|\widetilde{\boldsymbol{\eta}}_{u,h}^n+\widetilde{\boldsymbol{\eta}}_{u,t}^n\|_0^2\|\boldsymbol{u}(t_n)\|_2^2+\frac{\nu}{32}\|\widetilde{
\boldsymbol{e}}_{u,h}^n\|_1^2
 \\
c(\widetilde{\boldsymbol{e}}_{u,h}^n, \boldsymbol{u}(t_n), \widetilde{
\boldsymbol{e}}_{u,h}^n) 
&\leq C\|\widetilde{\boldsymbol{e}}_{u,h}^n\|_0\|\boldsymbol{u}(t_n)\|_2\|\widetilde{
\boldsymbol{e}}_{u,h}^n\|_1
\\&
\leq \frac{C}{\nu}\|\widetilde{\boldsymbol{e}}_{u,h}^n\|_0^2\|\boldsymbol{u}(t_n)\|_2^2+\frac{\nu}{16}\|\widetilde{
\boldsymbol{e}}_{u,h}^n\|_1^2
\\
c(\widetilde{\boldsymbol{\eta}}_{u,t}^n+\widetilde{\boldsymbol{\eta}}_{u,h}^n, \widetilde{\boldsymbol{\eta}}_{u,t}^n+\widetilde{\boldsymbol{\eta}}_{u,h}^n, \widetilde{
\boldsymbol{e}}_{u,h}^n)
&\leq C\|\widetilde{\boldsymbol{\eta}}_{u,t}^n+\widetilde{\boldsymbol{\eta}}_{u,h}^n\|_1^2\|\widetilde{
\boldsymbol{e}}_{u,h}^n\|_1
\\&
\leq \frac{C}{\nu}\|\widetilde{\boldsymbol{\eta}}_{u,t}^n+\widetilde{\boldsymbol{\eta}}_{u,h}^n\|_1^4+\frac{\nu}{16}\|\widetilde{
\boldsymbol{e}}_{u,h}^n\|_1^2
\\
c(\widetilde{\boldsymbol{e}}_{u,h}^n, \widetilde{\boldsymbol{\eta}}_{u,h}^n+\widetilde{\boldsymbol{\eta}}_{u,t}^n, \widetilde{
\boldsymbol{e}}_{u,h}^n)
&\leq C\|\widetilde{\boldsymbol{e}}_{u,h}^n\|_0^{1/2}\|\widetilde{\boldsymbol{\eta}}_{u,h}^n+\widetilde{\boldsymbol{\eta}}_{u,t}^n\|_1\|\widetilde{
\boldsymbol{e}}_{u,h}^n\|_1^{3/2}
\\&
\leq \frac{C}{\nu^3}\|\widetilde{\boldsymbol{e}}_{u,h}^n\|_0^2\|\widetilde{\boldsymbol{\eta}}_{u,h}^n+\widetilde{\boldsymbol{\eta}}_{u,t}^n\|_1^4+\frac{\nu}{16}\|\widetilde{
\boldsymbol{e}}_{u,h}^n\|_1^2.
\end{align*}

For the last estimate we used the inequality
\begin{align*}
 a^{1/2}b^{3/2} &=(\nu^{-3/2}a)^{1/2}(\sqrt{\nu} b)^{3/2}\leq C (((\nu^{-3/2} a)^{1/2})^4+((\sqrt{\nu} 
b)^{3/2})^{4/3})
 \\ &
\leq  C (\nu^{-3} a^2+ \nu b^2).
\end{align*}
In combination, the error with respect to the convective terms is given by
\begin{align*}
 &c(\boldsymbol{u}(t_n), \boldsymbol{u}(t_n), \widetilde{\boldsymbol{e}}_{u,h}^n)- 
c(\widetilde{\boldsymbol{u}}_h^n, \widetilde{\boldsymbol{u}}_h^n, \widetilde{\boldsymbol{e}}_{u,h}^n) \\
&\leq 
C\|\widetilde{\boldsymbol{e}}_{u,h}^n\|_0^2\left(\frac{\|\widetilde{\boldsymbol{\eta}}_{u,t}^n\|_1^4+\|\widetilde{\boldsymbol{\eta}}_{u,h}^n\|_1^4}{\nu^3}+\frac{\|\boldsymbol{u}(t_n)\|_2^2}{\nu}\right)+\frac{\nu}{4}\|\widetilde{\boldsymbol{e}}_{u,h}^n\|_1^2 
 \\&\quad
+\frac{C}{\nu}(\|\widetilde{\boldsymbol{\eta}}_{u,t}^n\|_1^4+\|\widetilde{\boldsymbol{\eta}}_{u,h}^n\|_1^4+\|\widetilde{\boldsymbol{\eta}}_{u,h}^n\|_0^2\|\boldsymbol{u}(t_n)\|_2^2).
\end{align*}
\end{proof}

\section{Discrete Gronwall Lemma}
\begin{lemma}{\label{lem:Gronwall_discrete}}
Let $y^n, h^n, g^n, f^n$ be non-negative sequences satisfying for all $0\le m\le [T/k]$
\begin{align*}
y^m + k \sum_{n=0}^m h^n &\le  B + k\sum_{n=0}^m (g^n y^n+f^n)~~\text{with}~k\sum_{n=0}^{[T/k]} g^n \le M.
\end{align*}
Assume $kg^n<1$ and let $\sigma=\max_{0\le n\le [T/k]} (1-kg^n)^{-1}$. Then for all $0\le m\le [T/k]$ it holds
\begin{align}
y^m + k\sum_{n=1}^m h^n &\le \exp(\sigma M)\left( B+k\sum_{n=0}^m f^n\right).
\end{align}
\end{lemma}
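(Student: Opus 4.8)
The plan is to reduce the stated inequality to the classical Heywood--Rannacher form and to establish the latter by induction. Throughout, write $Y^m := y^m + k\sum_{n=0}^m h^n$ for the quantity to be controlled; since all four sequences are non-negative, $Y^m \ge y^m + k\sum_{n=1}^m h^n$, so it suffices to bound $Y^m$. First I would isolate the $n=m$ contribution on the right-hand side of the hypothesis, writing
\begin{align*}
y^m + k\sum_{n=0}^m h^n \le B + k\sum_{n=0}^m f^n + kg^m y^m + k\sum_{n=0}^{m-1} g^n y^n,
\end{align*}
and move $kg^m y^m$ to the left. Using $0 < 1-kg^m \le 1$ together with $k\sum_{n=0}^m h^n \ge 0$ gives $(1-kg^m)Y^m \le (1-kg^m)y^m + k\sum_{n=0}^m h^n$, while on the right $y^n \le Y^n$ for every $n$. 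This produces the recursion
\begin{align*}
(1-kg^m)\,Y^m \le A^m + k\sum_{n=0}^{m-1} g^n Y^n, \qquad A^m := B + k\sum_{n=0}^m f^n,
\end{align*}
where $A^m$ is non-decreasing in $m$, and where the hypotheses $g^n\ge 0$ and $kg^n<1$ are exactly what make this absorption legitimate.

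Next I would prove by induction on $m$ that $Y^m \le A^m\,P_m$, where $P_m := \prod_{n=0}^m (1-kg^n)^{-1}$ and $P_{-1}:=1$. The base case $m=0$ is immediate after dividing by $1-kg^0>0$. For the inductive step, inserting $Y^n \le A^n P_n \le A^m P_n$ (using monotonicity of $A^m$) into the recursion yields
\begin{align*}
(1-kg^m)\,Y^m \le A^m\Big(1 + k\sum_{n=0}^{m-1} g^n P_n\Big).
\end{align*}
The heart of the argument is the telescoping identity $k g^n P_n = P_n - P_{n-1}$, which follows from $\tfrac{kg^n}{1-kg^n} = \tfrac{1}{1-kg^n}-1$; summing it gives $k\sum_{n=0}^{m-1} g^n P_n = P_{m-1}-1$, so $1 + k\sum_{n=0}^{m-1} g^n P_n = P_{m-1}$. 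Dividing by $1-kg^m$ then produces $Y^m \le A^m P_{m-1}(1-kg^m)^{-1} = A^m P_m$, closing the induction.

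Finally I would pass from the product $P_m$ to the exponential bound. Using the elementary inequality $(1-x)^{-1} \le \exp\!\big(\tfrac{x}{1-x}\big)$ for $0 \le x < 1$ (equivalent to $\ln t \le t-1$ with $t=(1-x)^{-1}$), I obtain $P_m \le \exp\!\big(k\sum_{n=0}^m \tfrac{g^n}{1-kg^n}\big)$. Bounding $\tfrac{1}{1-kg^n} \le \sigma$ in each term of the exponent and invoking $k\sum_{n=0}^{[T/k]} g^n \le M$ gives $P_m \le \exp(\sigma M)$, whence $Y^m \le \exp(\sigma M)\,A^m$, which is the assertion. The only genuinely delicate points are the telescoping identity and the monotone-$A^m$ trick inside the induction; once these are in place, the absorption step and the exponential estimate are routine, provided one is consistent in how the factor $1-kg^m$ is distributed over the $h$-sum and the $y^m$ term.
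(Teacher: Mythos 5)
Your proof is correct. Note, however, that the paper does not actually prove this lemma at all: its ``proof'' consists of a single citation to the literature (Temam's book), so there is no argument in the paper to compare yours against. What you have written is the standard self-contained derivation in the style of Heywood and Rannacher: the absorption of the $n=m$ term using $kg^m<1$, the strong induction giving $Y^m\le A^m\prod_{n=0}^m(1-kg^n)^{-1}$ via the telescoping identity $kg^nP_n=P_n-P_{n-1}$, and the passage to the exponential through $(1-x)^{-1}\le\exp\bigl(x/(1-x)\bigr)$ combined with $\tfrac{1}{1-kg^n}\le\sigma$. Each of these steps checks out: the inequality $(1-kg^m)Y^m\le(1-kg^m)y^m+k\sum_{n=0}^m h^n$ is legitimate because $1-kg^m\le 1$ and the $h$-sum is non-negative; the monotonicity of $A^m$ (which requires $f^n\ge 0$) is used exactly where needed in the induction; and the final reduction from $Y^m$ to $y^m+k\sum_{n=1}^m h^n$ uses $h^0\ge 0$. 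The only stylistic remark is that your bound is marginally sharper than the stated one before the last step (you obtain the product $P_m$ rather than $\exp(\sigma M)$), which is the usual situation with this lemma; the stated form is what the paper needs and your argument delivers it.
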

\begin{proof}
A proof of this result can be found in \cite{temam1995navier}, for instance.
\end{proof}

\bibliography{NSE_full}
\bibliographystyle{ieeetr}
\end{document}